\pgfplotsset{compat=1.15}
\newtheorem{theorem}{Theorem}[section]
\newtheorem{lemma}[theorem]{Lemma}
\newtheorem{prop}[theorem]{Proposition}
\newtheorem{remark}{Remark}[section]
\renewcommand{\Re}{\operatorname{Re}}
\def\R{\mathbb R}
\def\N{\mathbb N}
\def\Z{\mathbb Z}
\numberwithin{equation}{section}
\DeclareMathOperator{\dive}{div}
\title{Parabolic-elliptic Keller-Segel's system}
\author[Valentin Lemarié]{Valentin Lemarié}
\begin{document}
 {\begin{center}
\begin{abstract} 
We study on the whole space $\R^d$ the compressible Euler system with damping coupled to the Poisson equation when the damping coefficient tends towards infinity. We first prove a result of global existence for the Euler-Poisson system in the case where the damping is large enough, then, in a second step, we rigorously justify  the passage to the limit
to the parabolic-elliptic Keller-Segel after performing a diffusive rescaling, and get an explicit convergence rate.
The overall study is 
carried out in `critical' Besov spaces, in the spirit of the recent survey \cite{RD} by 
 R. Danchin devoted to partially dissipative systems. 
 \end{abstract}\end{center}}
\maketitle

\section{Introduction}
In this article, we will focus on two systems : Euler-Poisson and parabolic-elliptic Keller-Segel system. Let us first
present  these systems and motivate their study.

\medbreak
The Euler-Poisson system with damping, set on the whole space $\R^d$ (where $d\geq 2$) reads: 
\begin{eqnarray}\label{Euler-Poisson initial}\left\{ \begin{array}{ll} \partial_t \rho^{\varepsilon}+\dive(\rho^{\varepsilon} v^{\varepsilon})=0, \\ \partial_t(\rho^{\varepsilon} v^{\varepsilon}) + \dive(\rho^{\varepsilon} v^{\varepsilon} \otimes v^{\varepsilon})+\nabla \left( P \left( \rho^{\varepsilon}\right) \right)+\varepsilon^{-1}\rho^{\varepsilon} v^{\varepsilon} = - \rho^{\varepsilon} \nabla V^\varepsilon, \\ - \Delta V^\varepsilon= \rho^{\varepsilon}-\overline{\rho} \end{array} \right. \end{eqnarray}
where $\varepsilon>0$, $\rho^\varepsilon=\rho^\varepsilon(t,x)\in\R_+$ represents the density of the gas (with $\overline{\rho}>0$ a constant state), $v^\varepsilon=v^\varepsilon(t,x)\in\R^d$ the velocity, the pressure $P(z)=Az^\gamma$ with $\gamma>1$ and $A>0$, as well as $V^\varepsilon=V^\varepsilon(t,x)$ the potential.

This is the classical Euler compressible system with  damping, to which we added a coupling with the Poisson equation.

Without this coupling, System \eqref{Euler-Poisson initial} reduces to  the  compressible Euler system with the damping coefficient~$\varepsilon^{-1}$ : 
$$\left\{ \begin{array}{ll} \partial_t \rho^{\varepsilon}+\dive(\rho^{\varepsilon} v^{\varepsilon})=0, \\ \partial_t(\rho^{\varepsilon} v^{\varepsilon}) + \dive(\rho^{\varepsilon} v^{\varepsilon} \otimes v^{\varepsilon})+\nabla \left( P \left( \rho^{\varepsilon}\right) \right)+\varepsilon^{-1}\rho^{\varepsilon} v^{\varepsilon} = 0. \end{array} \right. $$

This system was recently studied by Crin-Barat and Danchin in \cite{CBD2} where they established a result of existence and uniqueness of global solutions for sufficiently small data, and obtained optimal time decay estimates for these solutions. In parallel, they  studied the singular limit of the system when the damping coefficient tends towards infinity. They then obtained the equation of porous media. In this article, we will draw freely on this study and the method used to obtain a priori estimates. 

The system that we will study is usually used to describe the transport of charge carriers (electrons and ions) in semiconductor devices or plasmas.  The system consists of conservation laws for mass density and current density for carriers, with a Poisson equation for electrostatic potential. This system is also of interest in other fields like e.g. in chemotaxis:
then, $\rho^\varepsilon$ represents cell density and $V^\varepsilon,$ the concentration of chemoattractant secreted by cells. 
For recent results on the Euler-Poisson system: well-posedness, existence of global solutions, study of long-time behavior or other singular limits, the reader can refer to \cite{EulerP1}, \cite{EulerP2}, \cite{EulerP3}, \cite{EulerP4}, \cite{EulerP5},\cite{EulerP6}, \cite{EulerP7}, \cite{EulerP8} and \cite{EulerP9}.

As in \cite{RD}, in order to investigate the asymptotics of solutions of \eqref{Euler-Poisson initial}
when $\varepsilon$ goes to $0,$ we perform 
the following so-called \emph{diffusive} change of  variable:  \begin{eqnarray}\label{changement de variable diffusif}(\tilde{\varrho}^\varepsilon,\tilde{v}^\varepsilon)(\tau,x)=(\varrho^\varepsilon,\varepsilon^{-1} v^\varepsilon)(\varepsilon^{-1} \tau,x)\end{eqnarray}
so that we have
\begin{eqnarray}\label{système avant passage à la limite}\left\{ \begin{array}{l}
\displaystyle \partial_t \tilde{\varrho}^\varepsilon+\dive(\tilde{\varrho}^\varepsilon \tilde{v}^\varepsilon)=0  \\ \displaystyle \varepsilon^2 \left(\partial_t \tilde{v}^\varepsilon+\tilde{v}^\varepsilon\cdot \nabla \tilde{v}^\varepsilon\right)+\frac{\nabla\left(P(\tilde{\varrho}^\varepsilon)\right)}{\tilde{\varrho}^\varepsilon}+\tilde{v}^\varepsilon+\nabla(-\Delta)^{-1}(\tilde{\varrho}^\varepsilon-\overline{\varrho}) =0 .
\end{array}\right.\end{eqnarray}
We then define the damped mode: 
\begin{eqnarray}\label{def mode amorti} \displaystyle\tilde{W}^\varepsilon\mathrel{\mathop:}=\frac{\nabla\left(P(\tilde{\varrho}^\varepsilon)\right)}{\tilde{\varrho}^\varepsilon}+\tilde{v}^\varepsilon+\nabla(-\Delta)^{-1}(\tilde{\varrho}^\varepsilon-\overline{\varrho}).\end{eqnarray}

As the first equation of \eqref{système avant passage à la limite} can be rewritten as $$\partial_t \tilde{\varrho}^\varepsilon-\Delta\left(P(\tilde{\varrho}^\varepsilon)\right)-\dive\left(\tilde{\varrho}^\varepsilon \nabla (-\Delta)^{-1}(\tilde{\varrho}^\varepsilon-\overline{\varrho})\right)=\dive(\tilde{\varrho}^\varepsilon \tilde{W}^\varepsilon),$$ we expect the limit density  $N$ to satisfy the following \emph{parabolic-elliptic Keller-Segel system} : \begin{eqnarray}\label{Keller-Segel} \left\{ \begin{array}{l}\partial_t N-\Delta\left(P(N)\right)=\dive\left(N \ \nabla V\right) \\ -\Delta V=N-\overline{\varrho} \end{array} \right. \end{eqnarray} supplemented with the initial data $\displaystyle \underset{\varepsilon\to 0}{\lim}\,\tilde{\varrho}_0^\varepsilon$.

\medbreak
Our second aim is to justify the passage to the limit when $\varepsilon\to 0$ of the Euler-Poisson system towards the parabolic-elliptic Keller-Segel system. 

Recall that \eqref{Keller-Segel} is a model for 
 describing  the evolution of density $N=N(t,x)\in \R_+$ of a biological population under the influence of a chemical agent with concentration $V=V(t,x)\in \R^d$. Chemotaxis are an important means of cell communication. How cells are arranged and organized is determined by communication by chemical signals. Studying such a biological process is important because it has repercussions in many branches of medicine such as cancer \cite{cancer1}, \cite{cancer2}, embryonic development  \cite{embryon} or vascular networks \cite{vasc1}, \cite{vasc2}.
The previous system is famous in biology and comes from E.F Keller and L.A Segel in \cite{keller-segel1}. This basic model was used to describe the collective movement of bacteria possibly leading to cell aggregation by chemotactic effect. We refer to the articles \cite{keller-segel2} and \cite{keller-segelall} for more details and information about the different Keller-Segel models studied since the 1970s. 

Our aim here is to demonstrate that \eqref{Keller-Segel} may be obtained from the Euler-Poisson system with damping when the  parameter $\varepsilon$ tends to $0.$ 
This question has been addressed   in \cite{Euler-Poisson tore} on the torus case and Sobolev spaces
 in a situation  where the potential satisfies a less singular equation : the author justifies the passage to the limit for regular periodic solutions. 
A lot of articles justify another limit:
the passage from the parabolic-parabolic Keller-Segel system to the  parabolic-elliptic Keller-Segel system (see e.g. the paper
\cite{Lemarié PG} by P-G. Lemarié-Rieusset for the case
of  Morrey spaces).

In the same spirit as this article, T. Crin-Barat, Q. He and L. Shou 
in \cite{CBHS} justified the  high relaxation asymptotics  
for the (less singular) parabolic-parabolic Keller-Segel system (the potential satisfies the equation $-\Delta V+bV=aN$ with $a,b>0$) : this other system comes from the system (HPC) (hyperbolic-parabolic-chemotaxis) which is a damped isentropic compressible Euler system  with a potential satisfying an elliptical equation. In comparison with what is done here, 
T. Crin-Barat \emph{et al} used a parabolic approach to 
justify their passage to the limit. Here,  we have to handle 
the more singular case where the limit system is parabolic-elliptic. 

\section{Main results and sketch of the proof}
In this section, we will first present and motivate the functional spaces used. Secondly we will state the results and the sketch of the proofs about the well-posedness behavior of Euler-Poisson system and the justification of the passage to the limit to parabolic-elliptic Keller-Segel system.
\subsection{Functional spaces}~\\
Before describing the main results of this article, we introduce the different notations and definitions used throughout this document. 
We will designate by $C>0$ an independent constant of $\varepsilon$ and time, and $f\lesssim g$ will mean $f\leq C g$. For any Banach space $X$ and all functions $f,g\in X$, we denote $\|(f,g)\|_X\mathrel{\mathop:=}\|f\|_X+\|g\|_X$. We designate by $L^2(\R_+;X)$ the set of measurable functions $f:[0,+\infty[\rightarrow X$ such that $t\mapsto \|f(t)\|_{X}$ belongs to $L^2(\R_+)$ and write $\|\cdot \|_{L^2(\R_+;X)}\mathrel{\mathop:}=\|\cdot \|_{L^2(X)}$.

In this article we will use a decomposition in Fourier space, called the \emph{homogeneous Littlewood-Paley decomposition}. 
To this end, we introduce  a regular non-negative function $\varphi$ on $\R^d$ with support in the annulus $\{\xi\in\R^d,\:  3/4\leq|\xi|\leq 8/3\}$ and satisfying $$\sum_{j\in\Z}\varphi(2^{-j}\xi)=1,\qquad \xi\not=0.$$ 
For all $j\in\Z$, the dyadic homogeneous blocks $\dot \Delta_j$ and the low frequency truncation operator $\dot S_j$ are defined by $$\dot \Delta_j \mathrel{\mathop:}=\mathcal{F}^{-1}(\phi(2^{-j}\cdot)\mathcal{F}u), \quad \dot S_j u\mathrel{\mathop:}=\mathcal{F}^{-1}(\chi(2^{-j}\cdot)\mathcal{F}u),$$ where $\mathcal{F}$ and $\mathcal{F}^{-1}$ designate respectively the Fourier transform and its inverse. From now on, we will use the following shorter notation : $$ u_j:=\dot\Delta_j u.$$

Let $\mathcal{S}_h'$ the set of tempered distributions $u$ on $\R^d$ such that $\displaystyle \underset{j\to -\infty}{\lim}\|\dot S_j u\|_{L^\infty}=0$. We have then : $$u=\sum_{j\in \Z} u_j \ \in \mathcal{S}', \quad \dot S_j u=\sum_{j'\leq j-1} u_{j'}, \ \forall u\in \mathcal{S}_h'.$$
Homogeneous Besov spaces $\dot B_{p,r}^s$ for all $p,r\in[1,+\infty]$ and $s\in\R$ are defined by: $$\dot B_{p,r}^s\mathrel{\mathop:}=\left\{u\in \mathcal{S}_h' \middle| \|u\|_{\dot B_{p,r}^s}\mathrel{\mathop:}=\|\{2^{js}\|u_j\|_{L^p}\}_{j\in\Z}\|_{l^r}<\infty\right\}\cdotp$$
In this article, we will only consider  Besov spaces of indices $p=2$ and $r=1$. 

As we will need to restrict our Besov norms in specific regions of low and high frequencies, we introduce the following notations : \begin{center}
$\displaystyle\|u\|_{\dot B_{2,1}^s}^h\mathrel{\mathop:}=\sum_{j\geq -1}2^{js}\|u_j\|_{L^2}$, $\displaystyle \ \|u\|_{\dot B_{2,1}^s}^l\mathrel{\mathop:}=\sum_{j\leq -1}2^{js}\|u_j\|_{L^2}$ ,  \ $ \displaystyle\|u\|_{\dot B_{2,1}^s}^{l^-,\varepsilon}\mathrel{\mathop:}=\sum\limits_{\underset{2^j\leq \varepsilon}{j\leq -1}}2^{js}\|u_j\|_{L^2}$, \  $\displaystyle \|u\|_{\dot B_{2,1}^s}^{l^+,\varepsilon}\mathrel{\mathop:}=\sum\limits_{\underset{2^j\geq \varepsilon}{j\leq -1}}2^{js}\|u_j\|_{L^2}.$
\end{center}
We put in the appendix several results about Besov spaces: the reader may refer to Chapter 2 of \cite{BCD} for more information on this topic.

\subsection{Main results, sketch of the proofs and article organization}~\\
The starting point is that, formally, \eqref{Euler-Poisson initial} rewrites :

\begin{eqnarray}\label{système initial}\left\{ \begin{array}{ll} \partial_t \rho^{\varepsilon}+\dive(\rho^{\varepsilon} v^{\varepsilon})=0, \\ \partial_t(\rho^{\varepsilon} v^{\varepsilon}) + \dive(\rho^{\varepsilon} v^{\varepsilon} \otimes v^{\varepsilon})+\nabla \left( P \left( \rho^{\varepsilon}\right) \right)+\varepsilon^{-1}\rho^{\varepsilon} v^{\varepsilon} =-\rho^{\varepsilon} \nabla \left(-\Delta\right)^{-1}\left(\rho^{\varepsilon}-\overline{\rho}\right). \end{array} \right. \end{eqnarray}

By the variable change \begin{eqnarray}\label{changement de variable 2}
    (\rho,v)=(\rho^{\varepsilon},v^{\varepsilon})(\varepsilon t, \varepsilon x),\end{eqnarray} we get the following system:  : 
\begin{eqnarray}\label{système 2}\left\{ \begin{array}{ll} \partial_t \rho+\dive(\rho v)=0, \\ \partial_t(\rho v) + \dive(\rho v \otimes v)+\nabla \left( P \left( \rho\right) \right)+\rho v =- \varepsilon^2  \rho \nabla \left(-\Delta\right)^{-1}\left(\rho-\overline{\rho}\right). \end{array} \right. \end{eqnarray}

For the study of this system, the key is to obtain suitable global-in-time a priori estimates. 
Then,  very classical  arguments lead to existence and uniqueness of global solutions (see Theorem  \ref{théorème Euler-Poisson}
below).

Obtaining estimates will be strongly inspired by the work done by Crin-Barat \emph{et al} in \cite{CBD2} where we consider the classic compressible Euler system. As the system we are studying is very close to a partially dissipative system, we
will follow  \cite{RD} so as to obtain a priori estimates : the standard energy method is not enough to conclude because we do not obtain all the information through this (mainly at the low frequencies) on the dissipated part. Hence, we must  better use the coupling,  exhibiting a combination of unknowns (the "purely" damped mode) that will allow us to recover the
whole dissipation. 

For the estimates, therefore, we follow the strategy proposed by Danchin in \cite{RD}: 
first (second part of the third section), we analyze how to obtain the estimates for the linear system in Besov space $\dot B_{2,1}^{s}$ where $s\in\R$ is any at the moment. For high frequencies, we follow step by step the approach of \cite{RD}, by putting the negligible term containing $\varepsilon^2 \nabla \Delta^{-1} \varrho$,  in order to obtain exponential decay.
For low frequencies, the task is slightly more complicated because the latter term is no more negligible : we lose the symmetry condition and we can not apply Danchin's method. 
But by looking at the system differently (essentially by changing $v$ to another variable dependent on $ \varepsilon$), we obtain a symmetrical system for which we find the associated estimates (go back to the initial unknowns to get the estimates). We then see the appearance of two regimes within the low frequencies that will be named respectively very low frequencies (frequencies below $ \varepsilon$) and medium frequencies (frequencies of magnitude between $ \varepsilon$ and $ 1$). 
To recover the full dissipative properties of the system, we introduce  the \emph{damped mode} $W\mathrel{\mathop:}=-\partial_t v.$  Then, from the  estimate satisfied by $W,$  we will improve the estimate for the low frequency part of $v.$ 

Let us next explain how  to choose the indices of regularity for the solution. Since our system is very similar to Euler’s without coupling, we make the same choice as in \cite{CBD2}~: $s=\frac{d}{2}$ for the medium frequencies and $s=\frac{d}{2}+1$ for the high frequencies. We take  $\frac{d}{2}-1$ for the very low frequencies of the density in view of the estimate obtained for the linear system.

For proving a priori estimates for \eqref{système 2}, we have to take into account non-linear terms now. To do this, for high frequencies, we again follow the method described by \cite{RD} with a precise analysis of the system using commutators.

Concerning the low frequencies, we need some information on the damped mode $W=-\partial_t v$, before studying the estimates of the density and velocity. By combining the obtained inequalities,we deduce the desired a priori estimates.

Before stating our main  results, 
we provide the reader with the following diagram so as to clarify the notations of the theorem:

\begin{figure}[!h]
\begin{center}
	\begin{tikzpicture}{width=15cm}

\begin{scope}[scale = 2]



\draw[->] (-2,0) -- (4.1,0);
\node[below] (0,0) {$1$};
 \draw (2,0) node{$\bullet$};
 \draw (2,0) node[below]{$\varepsilon^{-1}$};
 \node (2,0) {$\bullet$};

\draw (4.1,0) node[below]{$|\xi|$};
\draw[color=black,decorate,decoration={brace,raise=0.2cm}]
(-2,0.1) -- (0,0.1) node[above=0.3cm,pos=0.5] {$l$};
\draw[color=black,decorate,decoration={brace,raise=0.2cm}]
(0.05,0.1) -- (1.95,0.1) node[above=0.3cm,pos=0.5] {$l^+, \ 1 , \ \varepsilon^{-1}$};
\draw[color=black,decorate,decoration={brace,raise=0.2cm}]
(2,0.1) -- (4,0.1) node[above=0.3cm,pos=0.5] {$h, \ \varepsilon^{-1}$};
\draw[color=black,decorate,decoration={brace,raise=0.2cm}]
(2,-0.3) -- (-2,-0.3) node[above=-0.9cm,pos=0.5] {$l^-,\ \varepsilon^{-1}$};

\end{scope}	
\end{tikzpicture}
\end{center}	
\end{figure}


Our first result states the global well-posedness  of the Euler Poisson system with high relaxation. We point out an explicit dependence of the estimates with respect to the damping parameter, that we believe to be optimal:

\begin{theorem}\label{théorème Euler-Poisson} Let $\varepsilon>0$.
There exists a positive constante $\alpha$ such that for all $\varepsilon\leq {1}/{2}$ and initial data $Z_0^\varepsilon=(\varrho_0^\varepsilon-\overline{\varrho},v_0^\varepsilon)\in \left(\dot B_{2,1}^{\frac{d}{2}-1}\cap \dot B_{2,1}^{\frac{d}{2}+1}\right)\times \left(\dot B_{2,1}^{\frac{d}{2}}\cap \dot B_{2,1}^{\frac{d}{2}+1}\right) $ satisfying : 

$$\mathcal{Z}_0^{\varepsilon}\mathrel{\mathop:}= \|\varrho_0^\varepsilon-\overline{\varrho}\|_{\dot B_{2,1}^{\frac{d}{2}-1}}^{l}+ \|\varrho_0^\varepsilon-\overline{\varrho}\|_{\dot B_{2,1}^{\frac{d}{2}}}^{l^+,\ 1, \ \varepsilon^{-1}}+\|v_0^\varepsilon\|_{\dot B_{2,1}^{\frac{d}{2}}}^{l^{-},\ \varepsilon^{-1}}+\varepsilon\|\left(\varrho_0^\varepsilon-\overline{\varrho},v_0^\varepsilon\right)\|_{\dot B_{2,1}^{\frac{d}{2}+1}}^{h,\ \varepsilon^{-1}}\leq \alpha, $$ System \eqref{Euler-Poisson initial} 
supplemented with the initial data $(\varrho_0^\varepsilon-\bar\rho,v_0^\varepsilon)$  admits a unique global-in-time solution $Z^\varepsilon=(\varrho^\varepsilon-\bar\rho,v^\varepsilon)$ in the set 

\begin{multline*}
E\mathrel{\mathop:}=\bigg\{(\varrho^\varepsilon-\overline{\varrho},v^\varepsilon) \ \bigg| \  (\varrho^\varepsilon-\overline{\varrho})^{l}\in \mathcal{C}_b(\R_+:\dot B_{2,1}^{\frac{d}{2}-1}), \ \varepsilon(\varrho^\varepsilon-\overline{\varrho})^{l}\in L^1(\R_+; \dot B_{2,1}^{\frac{d}{2}-1}), \\ \ (\varrho^\varepsilon-\overline{\varrho})^{l^+, \ 1, \ \varepsilon^{-1}}\in \mathcal{C}_b(\R_+:\dot B_{2,1}^{\frac{d}{2}}), \ 
 \varepsilon(\varrho^\varepsilon-\overline{\varrho})^{l^+, \ 1, \ \varepsilon^{-1}}\in L^1(\R_+;\dot B_{2,1}^{\frac{d}{2}+1}), \\  (v^\varepsilon) ^{l^-, \ \varepsilon^{-1}}\in \mathcal{C}_b(\R_+;\dot B_{2,1}^{\frac{d}{2}}), \ (v^\varepsilon)^{l}\in L^1(\R_+;\dot B_{2,1}^{\frac{d}{2}}), \ (v^\varepsilon)^{l^+,\ 1, \ \varepsilon^{-1}}\in L^1(\R_+;\dot B_{2,1}^{\frac{d}{2}+1}), \\ (\varrho^\varepsilon-\overline{\varrho},v^\varepsilon)^{h, \varepsilon^{-1}}\in\mathcal{C}_b(\R_+;\dot B_{2,1}^{\frac{d}{2}-1})\cap L^1(\R_+;\dot B_{2,1}^{\frac{d}{2}+1}), \ w^\varepsilon\in \mathcal{C}_b(\R_+;\dot B_{2,1}^{\frac{d}{2}})\cap L^1(\R_+;\dot B_{2,1}^{\frac{d}{2}}) \bigg\}
\end{multline*}

where we have denoted $\displaystyle w^\varepsilon\mathrel{\mathop:}=\varepsilon\frac{\nabla\left(P(\varrho^\varepsilon)\right)}{\varrho^\varepsilon}+ v^\varepsilon+\varepsilon\nabla (-\Delta)^{-1}(\varrho^\varepsilon-\overline{\varrho})$.

~

Moreover, we have the following inequality : $$\mathcal{Z}^\varepsilon(t)\leq C \mathcal{Z}_0^\varepsilon$$ where \begin{multline*}
    \mathcal{Z}^\varepsilon(t)\mathrel{\mathop:}= \displaystyle\|\varrho^\varepsilon-\overline{\varrho}\|_{L^\infty\big(\dot B_{2,1}^{\frac{d}{2}-1}\big)}^{l}+\|\varrho^\varepsilon-\overline{\varrho}\|_{L^\infty\big(\dot B_{2,1}^{\frac{d}{2}}\big)}^{l^+,\ 1, \ \varepsilon^{-1} }+\|v^\varepsilon\|_{L^\infty \big(\dot B_{2,1}^{\frac{d}{2}}\big)}^{l^{-}, \  \varepsilon^{-1}}+\varepsilon\|(\varrho^\varepsilon-\overline{\varrho},v)\|_{L^\infty\big(\dot B_{2,1}^{\frac{d}{2}+1}\big)}^{h, \ \varepsilon^{-1}}
    \\
   + \|\varepsilon(\varrho^\varepsilon-\overline{\varrho})\|_{L^1\big(\dot B_{2,1}^{\frac{d}{2}-1}\big)}^{l}+\|v^\varepsilon\|_{L^1\big(\dot B_{2,1}^{\frac{d}{2}}\big)}^{l}+\varepsilon\|\varrho^\varepsilon-\overline{\varrho}\|_{L^1\big(\dot B_{2,1}^{\frac{d}{2}+2}\big)}^{l^+, \ 1 , \ \varepsilon^{-1}}+\|v^\varepsilon\|_{L^1\big(\dot B_{2,1}^{\frac{d}{2}+1}\big)}^{l^+,\ 1, \  \varepsilon^{-1}}\\ +\|(\varrho^\varepsilon-\overline{\varrho},v)\|_{L^1\big(\dot B_{2,1}^{\frac{d}{2}+1}\big)}^{h, \varepsilon^{-1}} +\|w^\varepsilon\|_{L^\infty\big(\dot B_{2,1}^{\frac{d}{2}}\big)}+\varepsilon^{-1}\|w^\varepsilon\|_{L^1\big(\dot B_{2,1}^{\frac{d}{2}}\big)}.
\end{multline*}

\end{theorem}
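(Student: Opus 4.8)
The plan is to establish the a priori estimate $\mathcal{Z}^\varepsilon(t) \leq C\mathcal{Z}_0^\varepsilon$ first, since once it holds uniformly in time under the smallness assumption $\mathcal{Z}_0^\varepsilon \leq \alpha$, global existence and uniqueness follow from a standard approximation and continuation argument. I would work on the rescaled system \eqref{système 2}, which differs from the classical damped Euler system only through the singular term $-\varepsilon^2\rho\nabla(-\Delta)^{-1}(\rho-\overline{\rho})$. Setting the density fluctuation $a := \varrho - \overline{\varrho}$ and linearizing around $(\overline{\varrho},0)$, I would keep every factor of $\varepsilon$ explicit throughout, so as to track the dependence on the relaxation parameter that the statement claims to be optimal.

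First I would treat the high-frequency regime $|\xi| \gtrsim \varepsilon^{-1}$. There the Poisson term carries a relative factor $\varepsilon^2|\xi|^{-2}$ against the pressure term and is genuinely negligible, so the system retains the symmetric partially dissipative structure analyzed in \cite{RD}. I would localize with $\dot\Delta_j$, build a Lyapunov functional combining $\|(a_j,v_j)\|_{L^2}^2$ with a cross term of the type $\langle \nabla a_j, v_j\rangle$ to recover dissipation on the density, and absorb the commutator and product errors by the Besov product laws of the appendix, reaching exponential decay of the high frequencies in $\dot B_{2,1}^{d/2+1}$.

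The delicate part is the low-frequency regime, where $-\varepsilon^2\rho\nabla(-\Delta)^{-1}(\rho-\overline{\rho})$ is no longer small and destroys the symmetry required by Danchin's method. Following the indication of the sketch, I would replace $v$ by an $\varepsilon$-dependent velocity that restores a symmetric structure at the level of the linearized symbol; comparing the pressure contribution of size $|\xi|$ with the Poisson contribution of size $\varepsilon^2|\xi|^{-1}$ shows these balance exactly at $|\xi| \sim \varepsilon$, which is precisely what separates the \emph{very low} frequencies $|\xi| \leq \varepsilon$ from the \emph{medium} frequencies $\varepsilon \leq |\xi| \leq 1$ and explains the two regularity indices $d/2-1$ and $d/2$ appearing in $\mathcal{Z}_0^\varepsilon$. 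To recover the dissipation that the plain energy method misses at low frequencies, I would introduce the damped mode $W := -\partial_t v$ (equivalently the combination $w^\varepsilon$ in the statement), derive its own exponentially damped estimate, and feed it back to upgrade the low-frequency control of $v$ into the $L^1$-in-time integrability recorded in $\mathcal{Z}^\varepsilon$.

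With the linear estimates in hand regime by regime, I would then incorporate the nonlinear terms $\dive(a\,v)$, $\dive(v\otimes v)$, the composition $P(\varrho)-P(\overline{\varrho})$, and the quadratic Poisson contribution, using the product, commutator, and composition estimates in Besov spaces, arranging the exponents ($d/2$ for medium, $d/2+1$ for high, $d/2-1$ for the very low density) so that each nonlinear contribution is bounded by $(\mathcal{Z}^\varepsilon)^2$. Summing over the regimes and closing by a bootstrap under $\mathcal{Z}_0^\varepsilon \leq \alpha$ yields $\mathcal{Z}^\varepsilon(t) \leq C\mathcal{Z}_0^\varepsilon$. Existence would then be obtained by solving a Friedrichs-truncated system, passing to the limit on the uniform bounds, with uniqueness coming from a stability estimate on the difference of two solutions in a space one derivative below $E$. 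The main obstacle is the low-frequency analysis: keeping the symmetrization and the damped-mode argument compatible with the singular $\varepsilon^2$ coupling while tracking the precise powers of $\varepsilon$, so that the final constant $C$ is genuinely independent of $\varepsilon$.
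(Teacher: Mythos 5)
Your proposal is correct in outline and follows essentially the same route as the paper: rescaling to \eqref{système 2}, a regime-by-regime linear analysis with an $\varepsilon$-dependent change of velocity restoring symmetry at low frequencies (with the two regimes correctly split at $|\xi|\sim\varepsilon$), the damped mode $W=-\partial_t v$ to recover the missing $L^1$-in-time dissipation, nonlinear closure with the indices $\frac{d}{2}-1$, $\frac{d}{2}$, $\frac{d}{2}+1$, Friedrichs truncation for existence, and a lower-regularity stability estimate for uniqueness. Two bookkeeping points to fix when writing it up: in the rescaled variables the high-frequency threshold is $|\xi|\gtrsim 1$, not $|\xi|\gtrsim\varepsilon^{-1}$ (the latter is the threshold for the original system, recovered at the end via the dilation Lemma \ref{dilatation}, a transfer step your sketch leaves implicit), and the pressure nonlinearity is most conveniently handled by the Makino symmetrization \eqref{changement de variable} rather than as a raw composition, so that the quasilinear energy estimates close without loss of derivatives.
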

\begin{remark}
In summary, for large enough damping and small enough initial data,  we get a global solution to the Euler-Poisson system. The demonstration will allow us to understand the choice of regularity indices for the different frequency groups. In addition, 
the control of the  damped mode $w^\varepsilon$ in the above theorem which enable us to obtain on the one hand the a priori estimate of the theorem and on the other hand the following theorem on the singular limit of \eqref{Euler-Poisson initial}  when the damping coefficient $\varepsilon^{-1}$ tends to infinity.
\end{remark}

By the change of variable \eqref{changement de variable diffusif} and the existence theorem on Euler-Poisson, we then have $\displaystyle\tilde{W}^\varepsilon=\mathcal{O}(\varepsilon)$ in $L^1(\R_+;\dot B_{2,1}^{\frac{d}{2}})$ where $\displaystyle \tilde{W}^\varepsilon$ is defined by \eqref{def mode amorti} and we need to look at the solutions of \eqref{système avant passage à la limite} in the following functional space : 
\begin{multline*}
\tilde{E}\mathrel{\mathop:}=\bigg\{(\varrho^\varepsilon-\overline{\varrho},v^\varepsilon)  \ \bigg| \  (\varrho^\varepsilon-\overline{\varrho})^{l}\in \mathcal{C}_b(\R_+:\dot B_{2,1}^{\frac{d}{2}-1}), \ \varepsilon(\varrho^\varepsilon-\overline{\varrho})^{l}\in L^1(\R_+; \dot B_{2,1}^{\frac{d}{2}-1}), \\ \ (\varrho^\varepsilon-\overline{\varrho})^{l^+, \ 1, \ \varepsilon^{-1}}\in \mathcal{C}_b(\R_+:\dot B_{2,1}^{\frac{d}{2}}), \ 
 \varepsilon(\varrho^\varepsilon-\overline{\varrho})^{l}\in L^1(\R_+;\dot B_{2,1}^{\frac{d}{2}+1}), \ (\varrho^\varepsilon-\overline{\varrho})^{l^+, \ 1, \ \varepsilon^{-1}}\in L^1(\R_+;\dot B_{2,1}^{\frac{d}{2}+1}), \\  \varepsilon(v^\varepsilon) ^{l-, \ \varepsilon^{-1}}\in \mathcal{C}_b(\R_+;\dot B_{2,1}^{\frac{d}{2}}), \ (v^\varepsilon)^{l}\in L^1(\R_+;\dot B_{2,1}^{\frac{d}{2}}), \ (v^\varepsilon)^{l^+, \ 1, \ \varepsilon^{-1}}\in L^1(\R_+;\dot B_{2,1}^{\frac{d}{2}+1}) , \\ (\varrho^\varepsilon-\overline{\varrho},v^\varepsilon)^{h, \varepsilon^{-1}}\in\mathcal{C}_b(\R_+;\dot B_{2,1}^{\frac{d}{2}-1})\cap L^1(\R_+;\dot B_{2,1}^{\frac{d}{2}+1}), \ w^\varepsilon\in \mathcal{C}_b(\R_+;\dot B_{2,1}^{\frac{d}{2}})\cap L^1(\R_+;\dot B_{2,1}^{\frac{d}{2}}) \bigg\}\cdotp
\end{multline*}
By studying the system satisfied by the difference between the solution of Euler-Poisson and that of Keller-Segel parabolic-elliptic and thanks to the previous theorem, we manage to justify that the solutions of the Euler-Poisson system that has been scaled back will converge to the solutions of the Keller-Segel system towards the following theorem :
\begin{theorem}\label{theoreme final}
We consider \eqref{système avant passage à la limite} for $\varepsilon>0$ small enough. Then, there exists a positive constant $\alpha$ (independent of $\varepsilon$) such that for all initial data $N_0\in \dot B_{2,1}^{\frac{d}{2}-1}\cap \dot B_{2,1}^\frac{d}{2}$ for \eqref{Keller-Segel} and $\tilde{Z}_0^\varepsilon\in \tilde{E}$ for \eqref{système avant passage à la limite} satisfying \begin{eqnarray}\label{condition initiale keller-segel}\|N_0\|_{\dot B_{2,1}^\frac{d}{2}\cap \dot B_{2,1}^{\frac{d}{2}-1}}\leq \alpha,\end{eqnarray}
$$\tilde{\mathcal{Z}}_0^{\varepsilon}\mathrel{\mathop:}= \|\tilde{\varrho}_0^\varepsilon-\overline{\varrho}\|_{\dot B_{2,1}^{\frac{d}{2}-1}}^{l}+ \|\tilde{\varrho}_0^\varepsilon-\overline{\varrho}\|_{\dot B_{2,1}^{\frac{d}{2}}}^{l^+,\ 1, \ \varepsilon^{-1}}+\varepsilon\|\tilde{v}_0^\varepsilon\|_{\dot B_{2,1}^{\frac{d}{2}}}^{l,\ \varepsilon^{-1}}+\varepsilon\|\big(\tilde{\varrho}_0^\varepsilon-\overline{\varrho},\varepsilon\tilde{v}_0^\varepsilon\big)\|_{\dot B_{2,1}^{\frac{d}{2}+1}}^{h,\ \varepsilon^{-1}}\leq \alpha, $$ the system \eqref{Keller-Segel} admits an unique solution $N$ in the space  $$\mathcal{C}_b\big(\R_+;\dot{B}_{2,1}^{\frac{d}{2}-2}\cap\dot B_{2,1}^\frac{d}{2}\big)\cap L^1\big(\R_+;\dot{B}_{2,1}^{\frac{d}{2}-2}\cap\dot B_{2,1}^\frac{d}{2}\big),$$ satisfying for all $t\geq 0$, \begin{eqnarray}\label{estimation keller-segel}\|N(t)-\overline{\varrho}\|_{\dot{B}_{2,1}^{\frac{d}{2}-1}\cap\dot B_{2,1}^\frac{d}{2}}+\int_0^t \|N-\overline{\varrho}\|_{\dot{B}_{2,1}^{\frac{d}{2}+2}\cap\dot B_{2,1}^{\frac{d}{2}+1}}d\tau \leq C \|N_0-\overline{\varrho}\|_{\dot{B}_{2,1}^{\frac{d}{2}-1}\cap\dot B_{2,1}^\frac{d}{2}},\end{eqnarray}
and the system \eqref{système avant passage à la limite} has an unique global-in-time solution $\tilde{Z}^\varepsilon$ in $\tilde{E}$ such that $$\tilde{\mathcal{Z}}(t)\leq C \tilde{\mathcal{Z}}_0$$ where \begin{multline}\label{espace fonctionnel final}
    \tilde{\mathcal{Z}}^\varepsilon(t)\mathrel{\mathop:}= \displaystyle\|\tilde{\varrho}^\varepsilon-\overline{\varrho}\|_{L^\infty\big(\dot B_{2,1}^{\frac{d}{2}-1}\big)}^{l}+\|\tilde{\varrho}^\varepsilon-\overline{\varrho}\|_{L^\infty\big(\dot B_{2,1}^{\frac{d}{2}}\big)}^{l^+,\ 1, \ \varepsilon^{-1} }+\varepsilon\|\tilde{v}^\varepsilon\|_{L^\infty \big(\dot B_{2,1}^{\frac{d}{2}}\big)}^{l, \  \varepsilon^{-1}} 
    +\varepsilon\|(\tilde{\varrho}^\varepsilon-\overline{\varrho},\varepsilon\tilde{v}^\varepsilon)\|_{L^\infty\big(\dot B_{2,1}^{\frac{d}{2}+1}\big)}^{h, \ \varepsilon^{-1}}
\\   +\|\tilde{\varrho}^\varepsilon-\overline{\varrho}\|_{L^1\big(\dot B_{2,1}^{\frac{d}{2}-1}\big)}^{l} + \|\tilde{v}^\varepsilon\|_{L^1\big(\dot B_{2,1}^{\frac{d}{2}}\big)}^{l}+\|\tilde{\varrho}^\varepsilon-\overline{\varrho}\|_{L^1\big(\dot B_{2,1}^{\frac{d}{2}+2}\big)}^{l^+,\ 1, \ \varepsilon^{-1}}+\|\tilde{v}^\varepsilon\|_{L^1\big(\dot B_{2,1}^{\frac{d}{2}+1}\big)}^{l^+,\ 1, \ \varepsilon^{-1}}
   \\
   + \varepsilon^{-1}\|(\tilde{\varrho}^\varepsilon-\overline{\varrho},\varepsilon\tilde{v}^\varepsilon)\|_{L^1\big(\dot B_{2,1}^{\frac{d}{2}+1}\big)}^{h, \varepsilon^{-1}} +\varepsilon\|\tilde{W}^\varepsilon\|_{L^\infty\big(\dot B_{2,1}^{\frac{d}{2}}\big)}+\varepsilon^{-1}\|\tilde{W}^\varepsilon\|_{L^1\big(\dot B_{2,1}^{\frac{d}{2}}\big)}
\end{multline} where $\displaystyle\tilde{W}^\varepsilon$ has been defined in \eqref{def mode amorti}.

~\\
Moreover, if, $\|N_0-\tilde{\varrho}^\varepsilon_0\|_{\dot B_{2,1}^{\frac{d}{2}-1}}\leq C\varepsilon$, then we have $$\|N-\tilde{\varrho}^\varepsilon\|_{L^\infty\big(\R_+;\dot B_{2,1}^{\frac{d}{2}-1}\big)}+\|N-\tilde{\varrho}^\varepsilon\|^h_{L^1\big(\R_+;\dot B_{2,1}^{\frac{d}{2}+1}\big)}+\|N-\tilde{\varrho}^\varepsilon\|^l_{L^1\big(\R_+;\dot B_{2,1}^{\frac{d}{2}}\big)}\leq C\varepsilon.$$
\end{theorem}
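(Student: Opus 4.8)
I would prove the three assertions in turn, beginning with the well-posedness of the limit system. Writing $n\mathrel{\mathop:}=N-\overline{\varrho}$ and $V=\nabla(-\Delta)^{-1}n$, I would recast \eqref{Keller-Segel} as
$$\partial_t n-P'(\overline{\varrho})\,\Delta n+\overline{\varrho}\,n=\dive\big(n\,\nabla(-\Delta)^{-1}n\big)+\Delta\big(P(\overline{\varrho}+n)-P(\overline{\varrho})-P'(\overline{\varrho})\,n\big).$$
The left-hand side is a heat operator perturbed by the coercive zeroth-order term $\overline{\varrho}\,n$, produced by $\dive\big(\overline{\varrho}\nabla(-\Delta)^{-1}n\big)=-\overline{\varrho}\,n$; in Fourier it yields the multiplier $e^{-(P'(\overline{\varrho})|\xi|^2+\overline{\varrho})t}$, whence the maximal-regularity gain of two derivatives in the $L^1$-in-time norm visible in \eqref{estimation keller-segel}. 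The right-hand side is quadratic — a bilinear term and a second-order Taylor remainder — so the product and composition laws recalled in the appendix, together with the smallness \eqref{condition initiale keller-segel}, let me absorb it and close the estimate by a standard fixed-point/continuation scheme; uniqueness follows by applying the same linear estimate to the difference of two solutions.

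\emph{Existence for the rescaled system.} Here I would not redo any analysis but transport Theorem \ref{théorème Euler-Poisson} through the diffusive change of variable \eqref{changement de variable diffusif}: it maps solutions of \eqref{Euler-Poisson initial} to solutions of \eqref{système avant passage à la limite}, carries the solution set $E$ onto $\tilde E$, and relates the damped modes by the scaling $\tilde W^\varepsilon(\tau,\cdot)=\varepsilon^{-1}w^\varepsilon(\varepsilon^{-1}\tau,\cdot)$. Tracking the powers of $\varepsilon$ carried by $\tilde v^\varepsilon=\varepsilon^{-1}v^\varepsilon$ and the time dilation, the bound $\mathcal Z^\varepsilon\le C\mathcal Z_0^\varepsilon$ turns into $\tilde{\mathcal Z}^\varepsilon\le C\tilde{\mathcal Z}_0^\varepsilon$. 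In particular, since the $L^1$-in-time norm is invariant under the time change, $\varepsilon^{-1}\|w^\varepsilon\|_{L^1(\dot B_{2,1}^{d/2})}\le C\mathcal Z_0^\varepsilon$ becomes $\tilde W^\varepsilon=\mathcal O(\varepsilon)$ in $L^1(\R_+;\dot B_{2,1}^{d/2})$ — the essential input for the last step.

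\emph{Convergence.} Setting $\tilde n\mathrel{\mathop:}=\tilde\varrho^\varepsilon-\overline{\varrho}$, $n\mathrel{\mathop:}=N-\overline{\varrho}$, $\delta\mathrel{\mathop:}=\tilde\varrho^\varepsilon-N$ and $A\mathrel{\mathop:}=\nabla(-\Delta)^{-1}$, I would subtract \eqref{Keller-Segel} from the reformulation
$$\partial_t\tilde\varrho^\varepsilon-\Delta\big(P(\tilde\varrho^\varepsilon)\big)-\dive\big(\tilde\varrho^\varepsilon\,\nabla(-\Delta)^{-1}(\tilde\varrho^\varepsilon-\overline{\varrho})\big)=\dive\big(\tilde\varrho^\varepsilon\tilde W^\varepsilon\big)$$
of the first line of \eqref{système avant passage à la limite}. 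Using $\dive\,A=-\mathrm{Id}$ to extract again the coercive term, and the telescoping identity $\tilde n\,A\tilde n-n\,An=\delta\,A\tilde n+n\,A\delta$, I obtain
$$\partial_t\delta-P'(\overline{\varrho})\,\Delta\delta+\overline{\varrho}\,\delta=\dive\big(\tilde\varrho^\varepsilon\tilde W^\varepsilon\big)+\dive\big(\delta\,A\tilde n+n\,A\delta\big)+\Delta R,$$
with $R\mathrel{\mathop:}=P(\tilde\varrho^\varepsilon)-P(N)-P'(\overline{\varrho})\,\delta$ a remainder bilinear in $\delta$ and the small densities $\tilde n,n$. The left-hand operator is exactly the coercive parabolic one of the first step, so I would run the same maximal-regularity estimate, but now at base regularity $\tfrac{d}{2}-1$: the forcing $\dive(\tilde\varrho^\varepsilon\tilde W^\varepsilon)$ can only be placed in $L^1(\dot B_{2,1}^{d/2-1})$ — where it is $\mathcal O(\varepsilon)$ by the previous step — and this loss of one derivative fixes the regularity level of the whole analysis. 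Parabolic smoothing then produces the high-frequency gain to $\dot B_{2,1}^{d/2+1}$, the zeroth-order term $\overline{\varrho}\,\delta$ produces the low-frequency gain to $\dot B_{2,1}^{d/2}$; the remaining terms are linear in $\delta$ with coefficients controlled by $\tilde n,n$ (hence absorbed into the left-hand side by smallness), and the initial gap is $\le C\varepsilon$ by hypothesis, so $\|N-\tilde\varrho^\varepsilon\|\le C\varepsilon$ in the stated norms.

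The hard part is this convergence step, and within it the coupling terms $\dive(\delta\,A\tilde n+n\,A\delta)$ and the remainder $\Delta R$. Because $A=\nabla(-\Delta)^{-1}$ gains a derivative only at low frequencies, these must be split and estimated regime by regime, matching the $L^1$-in-time levels $\dot B_{2,1}^{d/2+1}$ (high) and $\dot B_{2,1}^{d/2}$ (low) on the left of the final inequality; in particular $\Delta R$ has to be absorbed into the high-frequency output norm of $\delta$ itself, which is why that norm appears on the left. One must also use the $\varepsilon$-weighted components of $\tilde{\mathcal Z}^\varepsilon$ carefully so that no term degenerates as $\varepsilon\to0$. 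It is precisely this interplay between the elliptic coupling and the frequency-localized parabolic gains, absent in the parabolic–parabolic setting of \cite{CBHS}, that makes the parabolic–elliptic limit genuinely more singular.
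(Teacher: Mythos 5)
Your proposal is correct and follows essentially the same route as the paper: (i) solve \eqref{Keller-Segel} by a fixed point built on the parabolic smoothing at high frequencies combined with the zeroth-order damping $\overline{\varrho}\,n$ extracted from $\dive\big(\overline{\varrho}\,\nabla(-\Delta)^{-1}n\big)=-\overline{\varrho}\,n$ at low frequencies; (ii) transport Theorem \ref{théorème Euler-Poisson} through the diffusive scaling \eqref{changement de variable diffusif} to obtain the solution of \eqref{système avant passage à la limite} and the key bound $\tilde{W}^\varepsilon=\mathcal{O}(\varepsilon)$ in $L^1(\R_+;\dot B_{2,1}^{\frac{d}{2}})$; (iii) write the same coercive parabolic error equation for the difference, with the $\mathcal{O}(\varepsilon)$ forcing $\dive(\tilde{\varrho}^\varepsilon\tilde{W}^\varepsilon)$ placed in $L^1(\dot B_{2,1}^{\frac{d}{2}-1})$ and the remaining quadratic terms absorbed by smallness. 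The only cosmetic deviation is that you keep the pressure discrepancy as a single Taylor remainder $\Delta R$ whereas the paper expands it via $\Delta(P(\phi))=\Delta\phi\,P'(\phi)+|\nabla\phi|^2P''(\phi)$ into five explicitly estimated terms; both are handled by the same product and composition laws.
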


\begin{remark}
    This theorem ensures that the solution densities of the Euler-Poisson system converge (in the space highlighted in the theorem) towards the only solution of the Keller-Segel system. For the velocity limit, we can take the limit in \eqref{def mode amorti}.
\end{remark}

\section{Study of the Euler-Poisson system with damping} 
This section is devoted to the proof of theorem \ref{théorème Euler-Poisson}.

\subsection{Study of the linearized system}~~\\
Linearizing \eqref{système 2} around $(\overline{\varrho},0)$ yields the following system : 
\begin{eqnarray*}
\left\{ \begin{array}{ll} \partial_t \varrho+\dive v =0 \\ \partial_tv  +P'(\overline{\rho}) \nabla \varrho+v =- \varepsilon^2 \nabla \left(-\Delta\right)^{-1}\varrho. \end{array} \right. \end{eqnarray*} 

Performing the change of unknown $\tilde{\rho}\mathrel{\mathop:}=\varrho (t,P'(\overline{\rho})x)$ reduces the study to the case $P'(\overline{\rho})=1$ (after changing $\varepsilon$ into  $\displaystyle \varepsilon'\mathrel{\mathop:}= \frac{\varepsilon}{\sqrt{P'(\overline{\overline{\rho}})}}$). Hence we focus on the following linear system : 
\begin{eqnarray}\label{système linéarisé}
\left\{ \begin{array}{ll} \partial_t \varrho+\dive v =0, \\ \partial_tv  +\nabla \varrho+v =- \varepsilon^2 \nabla \left(-\Delta\right)^{-1}\varrho. \end{array} \right. \end{eqnarray}

By the Fourier transform, we get :
$$\frac{d}{dt}\begin{pmatrix} \widehat{\varrho} \\ \widehat{v}\end{pmatrix}+\begin{pmatrix} 0 & i\xi \\ i\left(1+\varepsilon^2\left|\xi\right|^{-2}\right)\xi^t & I_d \end{pmatrix}\begin{pmatrix} \widehat{\varrho}\\ \widehat{v}\end{pmatrix}=0.$$

The eigenvalues of the matrix of this system are:
\begin{itemize}
    \item[$\bullet$] $1$ is with multiplicity $d-1$,
    \item[$\bullet$] $ \displaystyle \left\{\begin{array}{ll} \displaystyle\lambda^{\pm}(\xi)=\frac{1}{2}(1\pm \sqrt{1-4(|\xi|^2+\varepsilon^2)}) &  \text{if} \ \displaystyle|\xi|^2+\varepsilon^2<\frac{1}{4} \\ & \\ \displaystyle\lambda^{\pm}(\xi)=\frac{1}{2}(1\pm i\sqrt{4(|\xi|^2+\varepsilon^2)-1}) &  \text{else} \end{array}\right.$ ~~~~~~~ are the two remaining eigenvalues.
\end{itemize}

For the high frequencies, we thus have : $$\Re\left(\lambda^{\pm}\left(\xi\right)\right)=1.$$ As for partially dissipative hyperbolic systems, we expect exponential decay for high frequencies.

For the low frequencies, in the case $\varepsilon<1/2,$ two regimes have to be considered: a very low frequency regime (i.e for $\displaystyle |\xi|\leq \sqrt{{1}/{4}-\varepsilon^2}$) and another for the medium frequencies (for $\displaystyle|\xi|\geq \sqrt{{1}/{4}-\varepsilon^2}$).

In what follows, we prove a priori estimates for  a smooth enough solution $(\varrho,v)$ of \eqref{système linéarisé}.   

\subsubsection{High frequency behavior: case $\displaystyle|\xi|\geq {1}/{2}$}

\begin{prop}
    Let $Z=(\varrho,v)$ and $\|Z\|_{\dot B_{2,1}^s}^h\mathrel{\mathop:}=\sum_{j\geq -1}2^{js}\|\dot \Delta_j Z\|_{L^2}$. Then we have : $$\boxed{\|Z(t)\|_{\dot B_{2,1}^{s}}^h+\int_0^t \|Z\|_{\dot B_{2,1}^{s}}^h d\tau \lesssim \|Z_0\|_{\dot B_{2,1}^{s}}^h}.$$
    \end{prop}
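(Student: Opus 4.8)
The plan is to estimate each dyadic block separately, following the partially-dissipative / hypocoercive strategy of \cite{RD,CBD2}. Fix $j\geq-1$ and set $\varrho_j=\dot\Delta_j\varrho$, $v_j=\dot\Delta_j v$. After Fourier transform and a Helmholtz projection the block decouples into a transverse part (the components of $\widehat v_j$ orthogonal to $\xi$), which satisfies $\partial_t\widehat v_j^{\perp}=-\widehat v_j^{\perp}$ and therefore decays like $e^{-t}$, and a longitudinal $2\times2$ system for $a:=\widehat\varrho_j$ and $\beta:=\xi\cdot\widehat v_j/|\xi|$:
\[
\partial_t a=-i|\xi|\beta,\qquad \partial_t\beta=-i\bigl(1+\varepsilon^2|\xi|^{-2}\bigr)|\xi|\,a-\beta.
\]
On the support of $\dot\Delta_j$ with $j\geq-1$ one has $|\xi|\gtrsim1$, so the coupling coefficient $\mu:=1+\varepsilon^2|\xi|^{-2}$ stays in $[1,2]$ for $\varepsilon\leq1/2$; this is exactly what makes the Poisson term a lower-order, and in fact favorable, perturbation at high frequencies.

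The basic energy $E:=|a|^2+|\beta|^2$ only sees the velocity damping, $\frac{d}{dt}E=-2|\beta|^2+2\varepsilon^2|\xi|^{-1}\Re(i\bar a\beta)$, and gives no decay for the density $a$ — this is the whole difficulty of partially dissipative systems. To recover it I would add the Kawashima-type cross quantity $J:=\Im(\bar a\beta)$, which satisfies $\frac{d}{dt}J=|\xi|\,|\beta|^2-\mu|\xi|\,|a|^2-J$ and whose term $-\mu|\xi|\,|a|^2$ supplies the missing density dissipation. Forming
\[
\mathcal L_j:=|a|^2+|\beta|^2+\eta\,|\xi|^{-1}J
\]
with $\eta>0$ fixed and small enough that $|\xi|^{-1}|J|\leq\tfrac12 E$ (so that $\mathcal L_j\simeq E$), the weight $|\xi|^{-1}$ cancels the $|\xi|$ factors and turns the good term into $-\eta\mu|a|^2$ while keeping the two remaining cross contributions, namely $-(2\varepsilon^2+\eta)|\xi|^{-1}J$, of the size of $E$.

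It then remains to check that, for a well-chosen $\eta$, all cross terms can be absorbed by Young's inequality into $-(2-\eta)|\beta|^2-\eta\mu|a|^2$, yielding $\frac{d}{dt}\mathcal L_j+c\,E\leq0$ with $c>0$ independent of $j\geq-1$ and of $\varepsilon\leq1/2$. Since $\mathcal L_j\simeq E\simeq\|\dot\Delta_j Z\|_{L^2}^2$, this gives $\frac{d}{dt}\mathcal L_j+c'\mathcal L_j\leq0$; dividing by $2\sqrt{\mathcal L_j}$ produces $\frac{d}{dt}\sqrt{\mathcal L_j}+\tfrac{c'}{2}\sqrt{\mathcal L_j}\leq0$, which integrated in time gives $\|\dot\Delta_j Z(t)\|_{L^2}+\int_0^t\|\dot\Delta_j Z\|_{L^2}\,d\tau\lesssim\|\dot\Delta_j Z_0\|_{L^2}$. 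Multiplying by $2^{js}$ and summing over $j\geq-1$ yields the boxed estimate; the transverse part, decaying like $e^{-t}$, is handled the same way.

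The one genuinely delicate point is this last absorption near the threshold $|\xi|^2+\varepsilon^2=1/4$ separating the two frequency regimes, i.e.\ for $|\xi|\approx1/2$ and $\varepsilon\approx1/2$, where the damping furnished by the eigenvalues $\lambda^{\pm}$ is weakest. There the crude bound $\mu\geq1$ is \emph{not} sufficient: one must retain the true size of $\mu$ — which is as large as $2$ precisely where $\varepsilon^2|\xi|^{-2}$ is largest — in the good term $-\eta\mu|a|^2$, and calibrate $\eta$ so that it is neither too small (so that the density dissipation survives) nor too large (so that $\mathcal L_j\simeq E$ and the velocity damping is preserved). Everything else is routine Littlewood–Paley book-keeping.
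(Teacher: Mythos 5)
Your proof is correct, and although it rests on the same underlying idea as the paper (a Kawashima-type cross term added to the energy to recover dissipation of the density at high frequencies), the execution is genuinely different. The paper works in physical space: it sets $\mathcal{L}_j=\frac12\|\nabla\varrho_j\|_{L^2}^2+\frac12\|\nabla v_j\|_{L^2}^2+\frac14\int_{\R^d}\nabla\varrho_j\cdot v_j\,dx$, compares $\|v_j\|_{L^2}$ with $\|\nabla v_j\|_{L^2}$ via Bernstein, and, crucially, treats the Poisson term $\varepsilon^2\nabla(-\Delta)^{-1}\varrho$ as a pure perturbation absorbed by Young's inequality; this costs a factor $1-4\varepsilon^2$ in the decay rate, which is why the paper restricts to $\varepsilon\le 1/4$ to obtain a uniform rate (the rate degenerates as $\varepsilon\to1/2$). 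You instead diagonalize in Fourier variables: Helmholtz splitting (transverse part trivially damped), a $2\times2$ longitudinal system, the cross functional $\eta|\xi|^{-1}\Im(\bar a\beta)$, and the Poisson contribution kept exactly inside $\mu=1+\varepsilon^2|\xi|^{-2}$, where it is favorable. What your route buys: uniformity of the rate up to $\varepsilon=1/2$, and your flagged "delicate point" is genuine --- with the crude bound $\mu\ge1$ the Young absorption really does fail at the corner $(|\xi|,\varepsilon)=(3/8,1/2)$ (the absorption condition becomes $100\eta^2-8\eta+16<0$, which has negative discriminant), whereas retaining $\mu$ exactly (there $\mu=25/9$) yields the condition $164\eta^2-136\eta+16<0$, giving a nonempty window, e.g. $\eta\in(0.15,0.37)$, compatible with the equivalence $\mathcal{L}_j\simeq E$. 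Two harmless slips: on the block $j=-1$ one only has $|\xi|\ge 3/8$, so $\mu$ can reach $25/9>2$ rather than staying in $[1,2]$ --- but a larger $\mu$ only strengthens the good term. What the paper's route buys in exchange: its physical-space functional is precisely the one that survives the passage to the nonlinear system via commutator estimates in the following subsection, whereas your pointwise Fourier diagonalization is tied to the constant-coefficient linear setting and would have to be abandoned there.
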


\begin{proof}

Since the classical energy method does not provide enough information, we consider, as in \cite{RD}, the evolution equation of $\nabla \varrho\cdot v$, namely $$\partial_t (\nabla \varrho\cdot v)=\nabla \partial_t \varrho\cdot v+\nabla \varrho\cdot \partial_t v=-\nabla \dive v\cdot v-\nabla \varrho\cdot \left(\nabla \varrho+v+\varepsilon^2 \nabla (-\Delta)^{-1} \varrho\right)\cdotp  $$

We then integrate on $\R^d$ and by integration by parts : $$\int_{\R^d} \partial_t(\nabla \varrho\cdot v) dx+\int_{\R^d} \nabla \varrho\cdot v dx=\|\dive v\|_{L^2}^2-\|\nabla \varrho\|_{L^2}^2 -\varepsilon^2 \|\varrho\|_{L^2}^2.$$

By taking the gradient in \eqref{système linéarisé} and taking the scalar product with $\nabla \varrho$ (respectively $\nabla v$), we get : 
$$\left\{ \begin{array}{l}
    \displaystyle\frac{1}{2}\frac{d}{dt}\|\nabla \varrho\|_{L^2}^2-\int_{\R^d}\nabla\nabla \varrho\cdot \nabla vdx=0   \\
     \displaystyle\frac{1}{2}\frac{d}{dt} \|\nabla v\|_{L^2}^2 + \int_{\R^d} \nabla \nabla \varrho \cdot \nabla vdx+\|\nabla v\|_{L^2}=-\varepsilon^2 \int_0^t \nabla \nabla (-\Delta)^{-1}\varrho\cdot \nabla v dx.
\end{array}\right.$$

These identities are also true for $(\varrho_j,v_j)$ (where $\varrho_j =\dot\Delta_j \varrho$ and $v_j=\dot \Delta_j v$) since the  studied system is linear with constant coefficients. 
To study high frequencies, we will further assume that  $j\in\N$. 

We then set $\displaystyle\mathcal{L}_j\mathrel{\mathop:}=\frac{1}{2}\|\nabla \varrho_j\|_{L^2}^2+\frac{1}{2}\|\nabla v_j\|_{L^2}^2+\frac{1}{4}\int_{\R^d} \nabla\varrho_j \cdot v_j \ dx$ which verifies : $$\frac{3}{8}\left(\|\nabla \varrho_j \|_{L^2}^2+\|\nabla v_j\|_{L^2}^2\right)\leq \mathcal{L}_j\leq \frac{5}{8}\left(\|\nabla \varrho_j \|_{L^2}^2+\|\nabla v_j\|_{L^2}^2\right)$$ because for $j\geq 0$, $\|v_j\|_{L^2}\leq \frac{1}{2}\|\nabla v_j\|_{L^2}$ by Bernstein's inequality.
\smallbreak
Then we have : $$\begin{aligned}\frac{d}{dt}\mathcal{L}_j+\|\nabla v_j\|_{L^2}^2+\frac{1}{4}\int_{\R^d} \nabla \varrho_j\cdot v_j dx-\frac{1}{4}\|\dive v\|_{L^2}^2+\frac{1}{4}\|\nabla \varrho_j\|_{L^2}^2&\leq \varepsilon^2 \|\varrho_j\|_{L^2}\|\nabla v_j\|_{L^2}+\frac{\varepsilon^2}{4}\|\varrho_j\|_{L^2}^2 \\ &\leq \frac{2\varepsilon^2}{4}\|\nabla\varrho_j\|_{L^2}^2+\frac{2\varepsilon^2}{4}\|\nabla v_j\|_{L^2}^2.
\end{aligned}$$

By the inequalities of Cauchy-Schwarz and Bernstein, we have : 
$$\begin{aligned}\|\nabla v_j\|_{L^2}^2+\frac{1}{4}\int_{\R^d} \nabla \varrho_j\cdot v_j dx-\frac{1}{4}\|\dive v\|_{L^2}^2&+\frac{1}{4}\|\nabla \varrho_j\|_{L^2}^2-\frac{2\varepsilon^2}{4}\|\nabla\varrho_j\|_{L^2}^2+\frac{2\varepsilon^2}{4}\|\nabla v_j\|_{L^2}^2 \\ &\geq -\frac{1}{4}\|\nabla \varrho_j\|_{L^2}\|v_j\|_{L^2}+\frac{3-2\varepsilon^2}{4}\|\nabla v_j\|_{L^2}^2+\frac{1-2\varepsilon^2}{4}\|\nabla \varrho_j\|_{L^2}^2\\ &\geq \frac{4-4\varepsilon^2}{8}\|\nabla v_j\|_{L^2}^2+\frac{1-4\varepsilon^2}{8}\|\nabla \varrho_j\|_{L^2}^2\\ &\geq\frac{1-4\varepsilon^2}{5}\mathcal{L}_j.\end{aligned}$$

By the inequality on $\mathcal{L}_j$, we thus  get : $$\frac{d}{dt}\mathcal{L}_j+\frac{1-4\varepsilon^2}{5}\mathcal{L}_j\leq 0.$$
For $\varepsilon\leq \frac{1}{4}$, so we get : $$\|\nabla (\varrho_j,v_j)(t)\|_{L^2}+\frac{3}{20}\int_0^t \|\nabla (\varrho_j,v_j)\|_{L^2}d\tau \leq  \|\nabla (\varrho_j,v_j)(0)\|_{L^2}.$$

We multiply by $2^{j(s-1)}$ and we sum up on $j\in\N$, we get then the inequality announced in the proposition (the case  $j=-1$
that presents no particular difficulty can be studied separately).
\end{proof}


\subsubsection{Low frequency behavior}
We have to proceed differently since the term $-\varepsilon^2 \nabla (-\Delta)^{-1}$ is of order $-1$. The goal will be here to understand his role.

~
\\
Let us use the Helmholtz decomposition to highlight the two behaviors corresponding respectively to the solenoid and the irrotational part of $v$: $$v=\mathcal{P}v+\mathcal{Q}v \qquad  \text{where } \ \mathcal{P} \ \text{ and } \ \mathcal{Q} \ \text{ verify } \ \mathcal{P}={\rm Id}+\nabla(-\Delta)^{-1}\dive \ \text{ and } \ \mathcal{Q}=-\nabla(-\Delta)^{-1}\dive. $$

~

First, let us look at the equation verified by $\mathcal{P}v$. By applying $\mathcal{P}$ to the second equation of the system  \eqref{système linéarisé} and using that $\mathcal{P}\nabla =0$, we get : $$\partial_t \mathcal{P}v+\mathcal{P}v=0.$$

By applying $\dot\Delta_j$ and taking the scalar product with $\mathcal{P}v_j$, we have: $$\frac{d}{dt}\|\mathcal{P}v_j\|_{L^2}^2+\|\mathcal{P}v_j\|_{L^2}^2=0.$$
Then we have by \eqref{lemme edo}: 

\begin{eqnarray}\label{estimation rot}
\boxed{\|\mathcal{P}v_j(t)\|_{L^2}+\int_0^t \|\mathcal{P}v_j\|_{L^2} \ d\tau\leq \|\mathcal{P}v_{0,j}\|_{L^2}.}
\end{eqnarray}
~ 
\\
Now let us look at the system satisfied by the divergence of $v$ and $\varrho$. We then set $u=\dive v$. By taking the divergence in the second equation of \eqref{système linéarisé}, we get the following $2\times 2$ system : 

$$\left\{ \begin{array}{l}  
     \partial_t \varrho+u=0, \\ \partial_t u + \Delta \varrho +u=\varepsilon^2 \varrho.
\end{array}\right.$$ ~\\
In Fourier variables, it becomes ~:

$$\left\{ \begin{array}{l}  \label{système linéaire pour amorti}
     \partial_t \widehat{\varrho}+\widehat{u}=0, \\ \partial_t \widehat{u}  -(|\xi|^2+\varepsilon^2) \widehat{\varrho} +\widehat u=0.\end{array}\right.$$

Setting $\displaystyle\widehat{w}\mathrel{\mathop:}= \frac{1}{\sqrt{|\xi|^2+\varepsilon^2}}\widehat{u}$ yields : 

\begin{eqnarray}\label{linéarisé tilde}\partial_t \tilde{Z} + A(D) \tilde{Z}+B(D) \tilde{Z} =0\end{eqnarray} $$\text{where} \quad \tilde{Z}=\begin{pmatrix} \varrho \\ w \end{pmatrix}, \quad A(\xi)=\begin{pmatrix} 0 & \sqrt{|\xi|^2+\varepsilon^2} \\ -\sqrt{|\xi|^2+\varepsilon^2} & 0  \end{pmatrix},\quad B(\xi)=\begin{pmatrix} 0 & 0 \\ 0 & 1 \end{pmatrix}\cdotp$$

Let us build by hand a Lyapunov functional, allowing us to recover the dissipative properties of the system on $\tilde{Z}$. By taking the scalar product with $\tilde{Z}$ in \eqref{linéarisé tilde} and looking at the time derivative of $\Re\left(\widehat{\varrho}\cdot \widehat{w}\right)$, we have:
$$\left\{ \begin{array}{l}
    \displaystyle \frac{1}{2}\frac{d}{dt}|\widehat{\varrho}|^2+\sqrt{|\xi|^2+\varepsilon^2}\ \Re(\widehat{\varrho}\cdot \widehat{w})=0, \\ \\ \displaystyle
      \frac{1}{2}\frac{d}{dt}|\widehat{w}|^2-\sqrt{|\xi|^2+\varepsilon^2} \ \Re(\widehat{\varrho}\cdot \widehat{w})+|\widehat{w}|^2=0,
      \\ \\\displaystyle \frac{d}{dt}\Re(\widehat{\varrho}\cdot \widehat{w})=-\sqrt{|\xi|^2+\varepsilon^2} \ |\widehat{w}|^2+\sqrt{|\xi|^2+\varepsilon^2} \ |\widehat{\varrho}|^2-\Re(\widehat{\varrho}\cdot \widehat{w}).
\end{array}\right.$$

With these equations, we can easily deduce the Lyapunov functional and the equation it verifies:

$$\frac{d}{dt}\left(|\widehat{\varrho}|^2+|\widehat{w}|^2-\sqrt{|\xi|^2+\varepsilon^2}\ \Re(\widehat{\varrho}\cdot \widehat{w})\right)+(2-(|\xi|^2+\varepsilon^2))|\widehat{w}|^2+(|\xi|^2+\varepsilon^2)|\widehat{\varrho}|^2-\sqrt{|\xi|^2+\varepsilon^2}\Re(\widehat{\varrho}\cdot \widehat{w})=0.$$
~

Yet $$ \sqrt{|\xi|^2+\varepsilon^2}\ \Re(\widehat{\varrho}\cdot \widehat{w})\leq \frac{|\xi|^2+\varepsilon^2}{2}|\widehat{\varrho}|^2+\frac{|\widehat{w}|^2}{2}\cdotp$$

Thus for $\displaystyle|\xi|^2+\varepsilon^2\leq 1$, we have: $$\frac{d}{dt}\left(|\widehat{\varrho}|^2+|\widehat{w}|^2-\sqrt{|\xi|^2+\varepsilon^2}\ \Re(\widehat{\varrho}\cdot \widehat{w})\right)+\frac{|\xi|^2+\varepsilon^2}{2}|\widehat{w}|^2+\frac{|\xi|^2+\varepsilon^2}{2}|\widehat{\varrho}|^2\leq 0.$$

We have then: \begin{eqnarray} \label{bf partie1}
\left|\left(\widehat{\varrho},\widehat{w}\right)(t,\xi)\right|\leq 2 e^{-\frac{1}{8}(|\xi|^2+\varepsilon^2) t}\left |\left(\widehat{\varrho}_0,\widehat{w}_0\right)(\xi)\right|.
\end{eqnarray}

So we have after spectral localization of the system by means of $\dot\Delta_j$
with $j\leq-1$ and for $\varepsilon$ small enough: \begin{eqnarray} \label{bf partie2} \hspace{1cm}
\|(\varrho_j,w_j)(t)\|_{L^2}+\frac{1}{8}(2^{2j}+\varepsilon^2)\int_0^t\| (\varrho_j,w_j)\|_{L^2} \,d\tau\leq 2 \|(\varrho_{0,j},w_{0,j}) \|_{L^2}.
\end{eqnarray}

~

We have a priori estimate on $\tilde{Z_j}$. A similar estimate has yet to be obtained for $(\varrho_j,u_j)$.

By definition of $w$ and by multiplying \eqref{bf partie2} by $\displaystyle\sqrt{2^{2j}+\varepsilon^2}$, we have as an estimate : 

\begin{eqnarray*}
 \boxed{\|(\sqrt{2^{2j}+\varepsilon^2}\ \varrho_j,u_j)(t)\|_{L^2}+\left(2^{2j}+\varepsilon^2\right)\int_0^t\| (\sqrt{2^{2j}+\varepsilon^2} \ \varrho_j,u_j)\|_{L^2} \,d\tau\lesssim  \|(\sqrt{2^{2j}+\varepsilon^2} \ \varrho_{0,j},u_{0,j}) \|_{L^2}.}
\end{eqnarray*}

~ 

These estimates reveal  two distinct regimes within the low frequencies : $|\xi|\leq \varepsilon$ ("very low frequencies") and $|\xi| \geq \varepsilon$ ("medium frequencies"). For very low frequencies, we have $2^{2j}+\varepsilon^2\simeq \varepsilon^2$, and thus $$\displaystyle \|(\varepsilon\tilde{\varrho}_j,u_j)(t)\|_{L^2}+\varepsilon^2\int_0^t \|(\varepsilon\tilde{\varrho}_j, u_j)\|_{L^2}d\tau \lesssim \|(\varepsilon\tilde{\varrho}_{0,j},u_{0,j})\|_{L^2}\quad\hbox{with}\quad  u=\dive v,$$
and for the medium frequencies, since $2^{2j}+\varepsilon^2\simeq 2^{2j}$,  
$$\displaystyle\|(\tilde{\varrho}_j,v_j)(t)\|_{L^2}+2^{2j}\int_0^t \|(\tilde{\varrho}_j,v_j)\|_{L^2}d\tau \lesssim  \|(\tilde{\varrho}_{0,j},v_{0,j})\|_{L^2}.$$

Consequently, if we denote : 
\begin{eqnarray}\label{déf normes}
\|Z\|_{\dot B_{2,1}^s}^{l^-,\varepsilon}\mathrel{\mathop:}=\sum\limits_{\underset{2^j\leq \varepsilon}{j\leq -1}}2^{js}\|Z_j\|_{L^2} \quad \text{et} \quad \|Z\|_{\dot B_{2,1}^s}^{l^+,\varepsilon}\mathrel{\mathop:}=\sum\limits_{\underset{2^j\geq \varepsilon}{j\leq -1}}2^{js}\|Z_j\|_{L^2}.    
\end{eqnarray}

then, we obtain:
$$\boxed{\left\{\begin{array}{l}
\displaystyle\|(\varepsilon\tilde{\varrho},\dive(v))(t)\|_{\dot B_{2,1}^s}^{l^-,\varepsilon}+\varepsilon^2 \int_0^t \|(\varepsilon\tilde{\varrho},\dive(v))\|_{\dot B_{2,1}^s}^{l^-,\varepsilon}d\tau \lesssim  \|\left(\varepsilon\tilde{\varrho}_0,\dive(v_0)\right)\|_{\dot B_{2,1}^s}^{l^-,\varepsilon}
\\
\displaystyle \|(\tilde{\varrho},v)(t)\|_{\dot B_{2,1}^s}^{l^+,\varepsilon} +\int_0^t  \|(\tilde{\varrho},v)\|_{\dot B_{2,1}^{s+2}}^{l^+,\varepsilon}d\tau \lesssim \|(\tilde{\varrho}_0,v_0)\|_{\dot B_{2,1}^s}^{l^+,\varepsilon},
\\ \displaystyle \|{\mathcal P}v(t)\|_{\dot B_{2,1}^{s}}^{l} +\int_0^t  \|{\mathcal P} v\|_{\dot B_{2,1}^{s}}^{l}d\tau \lesssim \|{\mathcal P}v_0\|_{\dot B_{2,1}^{s}}^{l} \qquad \text{(incompressible part)}.
\end{array} \right.}$$

\subsubsection{Damped mode and improvement of estimates for $v$}~~\\
Like in \cite{RD}, we consider the damped mode :
$$\tilde{W}\mathrel{\mathop:}=-\partial_t v =\nabla \varrho+\varepsilon^2 \nabla (-\Delta)^{-1}\varrho+v.$$

We have  : $$\partial_t \tilde{W}+\tilde{W}=-(\nabla+\varepsilon^2 \nabla(-\Delta)^{-1})\dive v.$$
By applying the localization operator $\dot \Delta_j$, taking the scalar product with $\tilde{W}_j$, multiplying by $2^{js}$, summing up on $j$ corresponding to very low and medium frequencies and applying the lemma \ref{lemme edo}, we get : $$\left\{\begin{array}{l}
     \displaystyle\|\tilde{W}(t)\|_{\dot B_{2,1}^s}^{l^-, \varepsilon}+\int_0^t \|\tilde{W}\|_{\dot B_{2,1}^s}^{l^-,\varepsilon}d\tau \lesssim \|\tilde{W}_0\|_{\dot B_{2,1}^{s}}^{l^-,\varepsilon}+\int_0^t \varepsilon^2 \|v\|_{\dot B_{2,1}^s}^{l^-,\varepsilon}d\tau  \\
      \displaystyle\|\tilde{W}(t)\|_{\dot B_{2,1}^s}^{l^+, \varepsilon}+\int_0^t \|\tilde{W}\|_{\dot B_{2,1}^s}^{l^+,\varepsilon}d\tau \lesssim \|\tilde{W}_0\|_{\dot B_{2,1}^{s}}^{l^+,\varepsilon}+\int_0^t  \|v\|_{\dot B_{2,1}^{s+2}}^{l^+,\varepsilon}d\tau
\end{array} \right. .$$
In particular, we have : $$\left\{\begin{array}{l}
     \displaystyle \|\tilde{W}_0\|_{\dot B_{2,1}^s}^{l^-,\varepsilon}\lesssim \varepsilon^2 \|\varrho_0\|_{\dot B_{2,1}^{s-1}}^{l^-,\varepsilon}+\|v_0\|_{\dot B_{2,1}^s}^{l^-,\varepsilon}  \\
      \|\tilde{W}_0\|_{\dot B_{2,1}^s}^{l^+,\varepsilon}\lesssim \|\varrho_0\|_{\dot B_{2,1}^{s+1}}^{l^+,\varepsilon}+\|v_0\|_{\dot B_{2,1}^s}^{l^+,\varepsilon}
\end{array}\right. .$$
By using the fact $v=\tilde{W}-\nabla \varrho-\varepsilon^2\nabla (-\Delta)^{-1}\varrho$ and the estimates on the low frequencies obtained previously, we have : $$\left\{\begin{array}{l} 
\displaystyle\|v(t)\|_{\dot B_{2,1}^s}^{l^-,\varepsilon}\leq  \|\tilde{W}(t)\|_{\dot B_{2,1}^s}^{l^-,\varepsilon}+\varepsilon^2\|\varrho(t)\|_{\dot B_{2,1}^{s-1}}^{l^-,\varepsilon}\lesssim \varepsilon^2 \|\varrho_0\|_{\dot B_{2,1}^{s-1}}^{l^-,\varepsilon}+\|v_0\|_{\dot B_{2,1}^s}^{l^-,\varepsilon}+\int_0^t \varepsilon^2 \|v\|_{\dot B_{2,1}^s}^{l^-,\varepsilon}d\tau
\\
\displaystyle
\int_0^t \varepsilon \|v\|_{\dot B_{2,1}^s}^{l^-,\varepsilon}d\tau \leq \int_0^t \varepsilon \|\tilde{W}\|_{\dot B_{2,1}^{s}}^{l^-,\varepsilon}d\tau+\int_0^t \varepsilon^3\|\varrho\|_{\dot B_{2,1}^{s-1}}^{l^-,\varepsilon}d\tau \lesssim \varepsilon \|\varrho_0\|_{\dot B_{2,1}^{s-1}}^{l^-,\varepsilon}+\|v_0\|_{\dot B_{2,1}^s}^{l^-,\varepsilon}+\int_0^t \varepsilon^2 \|v\|_{\dot B_{2,1}^s}^{l^-,\varepsilon}d\tau
\\ 
\displaystyle \|v(t)\|_{\dot B_{2,1}^s}^{l^+,\varepsilon}\lesssim \|\tilde{W}(t)\|_{\dot B_{2,1}^s}^{l^+,\varepsilon}+\|\varrho(t)\|_{\dot B_{2,1}^{s+1}}^{l^+,\varepsilon}\lesssim \|\varrho_0\|_{\dot B_{2,1}^{s+1}}^{l^+,\varepsilon}+\|v_0\|_{\dot B_{2,1}^s}^{l^+,\varepsilon}+\int_0^t  \|v\|_{\dot B_{2,1}^{s+2}}^{l^+,\varepsilon}d\tau
\\ 
\displaystyle \int_0^t \|v\|_{\dot B_{2,1}^{s+1}}^{l^+,\varepsilon}d\tau \leq \int_0^t \|\tilde{W}\|_{\dot B_{2,1}^{s+1}}^{l^+,\varepsilon}d\tau+\int_0^t \|\varrho\|_{\dot B_{2,1}^{s+2}}^{l^+,\varepsilon}d\tau \lesssim \|\varrho_0\|_{\dot B_{2,1}^{s}}^{l^+,\varepsilon}+\|v_0\|_{\dot B_{2,1}^s}^{l^+,\varepsilon}+\int_0^t  \|v\|_{\dot B_{2,1}^{s+2}}^{l^+,\varepsilon}d\tau
\end{array}
\right. $$
By summing up these inequalities and noticing that some terms in the right-hand side are negligible compared to those of the 
left-hand side, we get : $$\boxed{\left\{\begin{array}{l}
     \displaystyle \|v(t)\|_{\dot B_{2,1}^s}^{l^-,\varepsilon}+\int_0^t \varepsilon \|v\|_{\dot B_{2,1}^s}^{l^-,\varepsilon}d\tau \lesssim \varepsilon \|\varrho_0\|_{\dot B_{2,1}^{s-1}}^{l^-,\varepsilon}+\|v_0\|_{\dot B_{2,1}^s}^{l^-,\varepsilon}
    
      \\
    \displaystyle   \|v(t)\|_{\dot B_{2,1}^s}^{l^+,\varepsilon}+\int_0^t \|v\|_{\dot B_{2,1}^{s+1}}^{l^+,\varepsilon}d\tau \lesssim \|\varrho_0\|_{\dot B_{2,1}^{s}}^{l^+,\varepsilon}+\|v_0\|_{\dot B_{2,1}^s}^{l^+,\varepsilon}
\end{array}\right.}$$
\subsection{A priori estimates for the non-linear system}~\\
Let us now prove similar estimates for the non-linear system. To do this, we use Makino symmetrization,
which  consists in setting \begin{eqnarray} \label{changement de variable} c\mathrel{\mathop:}= \frac{\left(\gamma A\right)^{\frac{1}{2}}}{\tilde{\gamma}}\varrho^{\tilde{\gamma}} \ \text{with} \ \tilde{\gamma}=\frac{\gamma-1}{2}\cdotp \end{eqnarray}
After this change of unknown, we obtain:
\begin{eqnarray}\label{système avant théorème d'existence}\left\{ \begin{array}{ll} \partial_t c + v\cdot \nabla c +\tilde{\gamma} c \dive(v)=0 \\ \partial_t v +v\cdot \nabla v + \tilde{\gamma} c \nabla c + v=- \varepsilon^2\nabla \left(-\Delta\right)^{-1}\left( \left(\frac{\tilde{\gamma}}{\left(\gamma A \right)^{\frac{1}{2}}}\right)^{\frac{1}{\tilde{\gamma}}} c^{\frac{1}{\tilde{\gamma}}}-\overline{\varrho}\right). \end{array} \right. \end{eqnarray}

We set $\displaystyle f(x)\mathrel{\mathop:}=\left(\frac{\tilde{\gamma}}{(\gamma A)^{\frac{1}{2}}}\right)^{\frac{1}{\tilde{\gamma}}} x^\frac{1}{\tilde{\gamma}}.$
     So we have by Taylor’s formula with integral rest : $$\left(\frac{\tilde{\gamma}}{(\gamma A)^{\frac{1}{2}}}\right)^\frac{1}{\tilde{\gamma}}\left(c^\frac{1}{\tilde{\gamma}}-\overline{c}^\frac{1}{\tilde{\gamma}}\right)=f(\tilde{c}+\overline{c})-f(\overline{c})=\tilde{c} f'(\overline{c})+\int_{\overline{c}}^{c}{(c-y)}f''(y)dy.$$
     
     ~ 
     
    Let us set $\displaystyle F(\tilde{c})=\int_{\overline{c}}^{c}(c-y)f''(y)dy$ and $\displaystyle G(\tilde{c})=\tilde{c} f'(\overline{c})+\int_{\overline{c}}^{c}{(c-y)}f''(y)dy$ which vanishes at 0 where $\displaystyle \overline{c}\mathrel{\mathop:}=\frac{(\gamma A)^{\frac{1}{2}}}{\tilde{\gamma}}\ \overline{\varrho}^{\tilde{\gamma}}$ and $\tilde{c}=c-\overline{c}$.

~ 

Then we get: 
\begin{eqnarray}\label{système avec G}\left\{ \begin{array}{ll} \partial_t \tilde{c} + v\cdot \nabla \tilde{c }+\tilde{\gamma} \overline{c} \dive(v)+ \tilde{\gamma}\tilde{c}\dive(v)=0, \\ \partial_t v +v\cdot \nabla v + \tilde{\gamma} \overline{c} \nabla c + \tilde{\gamma} \tilde{c} \nabla c + v=- \varepsilon^2\nabla \left(-\Delta\right)^{-1}G(\tilde{c}). \end{array} \right. \end{eqnarray}

~ 
After changing $v(t,x)$ and $c(t,x)$
into $\displaystyle v(t,\tilde{\gamma}\overline{c}x)$  and $c(t,\tilde{\gamma}\overline{c}x)$, respectively, we can look at the following system (keeping the previous notations) :
\begin{eqnarray} \label{après changement de variable} \left\{ \begin{array}{ll}\displaystyle \partial_t \tilde{c} +\frac{1}{\tilde{\gamma}\overline{c}} v\cdot \nabla \tilde{c }+\dive(v)+ \frac{\tilde{c}}{\overline{c}}\dive(v)=0, \\\displaystyle \partial_t v +\frac{1}{\tilde{\gamma}\overline{c}} v\cdot \nabla v + \nabla c + \frac{\tilde{c}}{\overline{c}} \nabla c + v=- \varepsilon^2\nabla \left(-\Delta\right)^{-1}G(\tilde{c}). \end{array} \right. \end{eqnarray}
~ 

To simplify the presentation, suppose from now on that $\tilde{\gamma} \overline{c}=1$ : the general case works the same way.

We’re going to assume throughout this section that: \begin{eqnarray}\label{hypothèse de petitesse}\boxed{\|(\tilde{c},v) \|_{\dot B_{2,1}^\frac{d}{2}}\ll 1}.\end{eqnarray} 

In view of the linear analysis, we will start the following study with the choice of index: 
\begin{itemize}
    \item 
    $\displaystyle\frac{d}{2}-1$ for very low frequencies, 
    \item $\displaystyle\frac{d}{2}$ for medium frequencies, 
    \item $\displaystyle \frac{d}{2}+1$ for high frequencies.
    \end{itemize}    
    This choice of index is strongly inspired by the results of \cite{D3} where to study the relaxation limit, a similar choice is taken.

~
\\
Let us pose then : $$\tilde{\mathcal{L}}(t)\mathrel{\mathop:}=\|(\varepsilon \tilde{c},\dive v)(t)\|_{\dot B_{2,1}^{\frac{d}{2}-1}}^{l^-,\varepsilon}+\|(\tilde c, v)(t)\|_{\dot B_{2,1}^{\frac{d}{2}}}^{l^+,\varepsilon}+ \|\mathcal{P}v(t)\|_{\dot B_{2,1}^{\frac{d}{2}}}^{l} +\|(\tilde c, v)(t)\|_{\dot B_{2,1}^{\frac{d}{2}+1}}^{h}$$

and $$\tilde{\mathcal{H}}(t)\mathrel{\mathop:}=\varepsilon^2 \|(\varepsilon \tilde{c},\dive v)(t)\|_{\dot B_{2,1}^{\frac{d}{2}-1}}^{l^-,\varepsilon}+\|(\tilde c, v)(t)\|_{\dot B_{2,1}^{\frac{d}{2}+2}}^{l^+,\varepsilon}+\|\mathcal{P}v(t)\|_{\dot B_{2,1}^{\frac{d}{2}}}^{l}+\|(\tilde c, v)(t)\|_{\dot B_{2,1}^{\frac{d}{2}+1}}^{h}.$$

\begin{lemma}\label{Inégalités fonctionnelle}
    We have the following inequalities : $$\left\{\begin{array}{l}\|(\tilde{c},v)(t)\|_{\dot B_{2,1}^{\frac{d}{2}}}^{l}+\|(\tilde{c},v)(t)\|_{\dot B_{2,1}^{\frac{d}{2}+1}}^{h} \lesssim\tilde{\mathcal{L}}(t), \\ \|(\tilde{c},v)(t)\|_{\dot B_{2,1}^{\frac{d}{2}+2}}^{l}+\|(\tilde{c},v)(t)\|_{\dot B_{2,1}^{\frac{d}{2}+1}}^{h}\lesssim 
    \varepsilon^2 \|(\tilde{c},v)(t)\|_{\dot B_{2,1}^{\frac{d}{2}}}^{l^-,\varepsilon}+\|(\tilde{c},v)(t)\|_{\dot B_{2,1}^{\frac{d}{2}+2}}^{l^+,\varepsilon}+ \|(\tilde{c},v)(t)\|_{\dot B_{2,1}^{\frac{d}{2}+1}}^{h}\lesssim \tilde{\mathcal{H}}(t), \\
\varepsilon^2 \|(\tilde{c},v)(t)\|_{\dot B_{2,1}^{\frac{d}{2}}}^2+\|(\tilde{c},v)(t)\|_{\dot B_{2,1}^{\frac{d}{2}+1}}^2\lesssim \tilde{\mathcal{L}}(t)\tilde{\mathcal{H}}(t).\end{array}\right.$$
    
\end{lemma}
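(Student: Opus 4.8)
The plan is to prove the three inequalities by purely frequency-localized bookkeeping, working regime by regime: very low frequencies ($l^-$, where $2^j\le\varepsilon$), medium frequencies ($l^+$, where $\varepsilon\le 2^j$ and $j\le -1$), and high frequencies ($h$, where $j\ge -1$). No dynamics are used; these are algebraic comparisons between the norms appearing in $\tilde{\mathcal{L}}$ and $\tilde{\mathcal{H}}$, and the only tools needed are Bernstein's inequality (to shift the regularity index inside a fixed dyadic band), the Helmholtz decomposition $v=\mathcal{P}v+\mathcal{Q}v$ (recalling that $\|\mathcal{Q}v_j\|_{L^2}\simeq 2^{-j}\|\dive v_j\|_{L^2}$, so that $\|\mathcal{Q}v\|_{\dot B_{2,1}^{\frac d2}}^{l^-,\varepsilon}\simeq\|\dive v\|_{\dot B_{2,1}^{\frac d2-1}}^{l^-,\varepsilon}$), and the scale relations $2^j\le\varepsilon$ on $l^-$ and $2^j\ge\varepsilon$ on $l^+$, which let one trade a power of $\varepsilon$ against a power of $2^j$.

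For the first inequality, the high-frequency piece $\|(\tilde c,v)\|_{\dot B_{2,1}^{\frac d2+1}}^{h}$ and the $l^+$ part of $\|(\tilde c,v)\|_{\dot B_{2,1}^{\frac d2}}^{l}$ are literally terms of $\tilde{\mathcal{L}}$. On $l^-$ I would treat $\tilde c$ by Bernstein, gaining the factor $\varepsilon$ needed to pass from $\dot B_{2,1}^{\frac d2-1}$ to $\dot B_{2,1}^{\frac d2}$ (since $2^j\le\varepsilon$ gives $\|\tilde c\|_{\dot B_{2,1}^{\frac d2}}^{l^-,\varepsilon}\le\|\varepsilon\tilde c\|_{\dot B_{2,1}^{\frac d2-1}}^{l^-,\varepsilon}$), and I would split $v=\mathcal{P}v+\mathcal{Q}v$: the solenoidal part is dominated by $\|\mathcal{P}v\|_{\dot B_{2,1}^{\frac d2}}^{l}$ and the irrotational part by $\|\dive v\|_{\dot B_{2,1}^{\frac d2-1}}^{l^-,\varepsilon}$, both controlled by $\tilde{\mathcal{L}}$.

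The second inequality is the same game at higher regularity. After matching the identical $h$ and $l^+$ terms on both sides, the first $\lesssim$ reduces to $\|(\tilde c,v)\|_{\dot B_{2,1}^{\frac d2+2}}^{l^-,\varepsilon}\le\varepsilon^2\|(\tilde c,v)\|_{\dot B_{2,1}^{\frac d2}}^{l^-,\varepsilon}$, which is Bernstein (now gaining $\varepsilon^2$), while for the second $\lesssim$ I would again use Bernstein on $\tilde c$ and Helmholtz on $v$ to bound $\varepsilon^2\|(\tilde c,v)\|_{\dot B_{2,1}^{\frac d2}}^{l^-,\varepsilon}$ by $\varepsilon^2\|(\varepsilon\tilde c,\dive v)\|_{\dot B_{2,1}^{\frac d2-1}}^{l^-,\varepsilon}+\|\mathcal{P}v\|_{\dot B_{2,1}^{\frac d2}}^{l}$, all of which are terms of $\tilde{\mathcal{H}}$.

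The third inequality is the only genuinely quadratic statement, and is the step I expect to require the most care. Writing $X=(\tilde c,v)$ and using $(a+b+c)^2\le 3(a^2+b^2+c^2)$ to split each squared norm into the three regimes, I would bound each regime by a geometric mean. On $l^+$, Cauchy--Schwarz in $j$ (writing $2^{j(\frac d2+1)}=(2^{j\frac d2})^{1/2}(2^{j(\frac d2+2)})^{1/2}$) gives $\big(\|X\|_{\dot B_{2,1}^{\frac d2+1}}^{l^+,\varepsilon}\big)^2\le\|X\|_{\dot B_{2,1}^{\frac d2}}^{l^+,\varepsilon}\,\|X\|_{\dot B_{2,1}^{\frac d2+2}}^{l^+,\varepsilon}\lesssim\tilde{\mathcal{L}}\tilde{\mathcal{H}}$, and $\varepsilon\|X\|_{\dot B_{2,1}^{\frac d2}}^{l^+,\varepsilon}\le\|X\|_{\dot B_{2,1}^{\frac d2+1}}^{l^+,\varepsilon}$ disposes of the $\varepsilon^2$ term; on $h$, $\|X\|_{\dot B_{2,1}^{\frac d2+1}}^{h}$ is bounded by both $\tilde{\mathcal{L}}$ and $\tilde{\mathcal{H}}$, so its square is bounded by the product; on $l^-$, the relation $2^j\le\varepsilon$ gives $\|X\|_{\dot B_{2,1}^{\frac d2+1}}^{l^-,\varepsilon}\le\varepsilon\|X\|_{\dot B_{2,1}^{\frac d2}}^{l^-,\varepsilon}$, and then the pair of bounds $\|X\|_{\dot B_{2,1}^{\frac d2}}^{l^-,\varepsilon}\lesssim\tilde{\mathcal{L}}$ (from the first inequality) and $\varepsilon^2\|X\|_{\dot B_{2,1}^{\frac d2}}^{l^-,\varepsilon}\lesssim\tilde{\mathcal{H}}$ (from the second) yields $\varepsilon^2\big(\|X\|_{\dot B_{2,1}^{\frac d2}}^{l^-,\varepsilon}\big)^2\lesssim\tilde{\mathcal{L}}\tilde{\mathcal{H}}$. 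The main obstacle throughout is the very-low-frequency velocity: it is $\dive v$ and $\mathcal{P}v$, not $v$ itself, that appear in $\tilde{\mathcal{L}}$ and $\tilde{\mathcal{H}}$, so every bound on $v$ on $l^-$ must be routed through the Helmholtz decomposition while keeping exact track of the powers of $\varepsilon$ produced by Bernstein.
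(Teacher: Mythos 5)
Your proposal is correct and follows essentially the same route as the paper's proof: regime-by-regime bookkeeping with the scale relations $2^j\le\varepsilon$ (resp.\ $2^j\ge\varepsilon$), the Helmholtz splitting $v=\mathcal{P}v+\mathcal{Q}v$ to route the very-low-frequency velocity through $\dive v$ and $\mathcal{P}v$, Cauchy--Schwarz interpolation in $j$ on the medium frequencies, and the fact that the high-frequency block appears in both $\tilde{\mathcal{L}}$ and $\tilde{\mathcal{H}}$. The paper merely compresses the first two inequalities into ``easily deduced from the definitions'' and, for the $\varepsilon^2\|Z\|_{\dot B_{2,1}^{d/2}}^2$ term, factors one full norm out instead of squaring each regime, but these are presentational differences only; your version is, if anything, more carefully justified.
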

\begin{proof}~
The first two inequalities are easily deduced from the definition \eqref{déf normes}.
\\
Let us set $Z=(\tilde c,v)$. We have by definition and the second inequality:  $$\begin{aligned} \varepsilon^2 \|Z\|_{\dot B_{2,1}^{\frac{d}{2}}}^2 & =\left(\varepsilon^2\|Z\|_{\dot B_{2,1}^{\frac{d}{2}}}^{l^-,\varepsilon}+\varepsilon^2\|Z\|_{\dot B_{2,1}^{\frac{d}{2}}}^{l^+,\varepsilon}+\varepsilon^2\|Z\|_{\dot B_{2,1}^{\frac{d}{2}}}^{h}\right)\|Z\|_{\dot B_{2,1}^{\frac{d}{2}}} \\ & \lesssim \left(\varepsilon^2 \|Z\|_{\dot B_{2,1}^{\frac{d}{2}}}^{l^-,\varepsilon}+\|Z\|_{\dot B_{2,1}^{\frac{d}{2}+2}}^{l^+,\varepsilon}+\varepsilon^2 \|Z\|_{\dot B_{2,1}^{\frac{d}{2}+1}}^{h}\right)\|Z\|_{\dot B_{2,1}^{\frac{d}{2}}} \\ &  \lesssim \tilde{\mathcal{L}}\tilde{\mathcal{H}}.\hfill \end{aligned} $$

We have also : $$\begin{aligned}\|Z\|_{\dot B_{2,1}^{\frac{d}{2}+1}}^2 & \lesssim \left(\|Z\|_{\dot B_{2,1}^{\frac{d}{2}+1}}^{l^-,\varepsilon}\right)^2+\left(\|Z\|_{\dot B_{2,1}^{\frac{d}{2}+1}}^{l^+,\varepsilon}\right)^2+\left(\|Z\|_{\dot B_{2,1}^{\frac{d}{2}+1}}^{h}\right)^2 \\ & \lesssim \varepsilon^2 \|Z\|_{\dot B_{2,1}^{\frac{d}{2}}}^{l^-,\varepsilon}+ \|Z\|_{\dot B_{2,1}^{\frac{d}{2}}}^{l^+,\varepsilon}\|Z\|_{\dot B_{2,1}^{\frac{d}{2}+2}}^{l^+,\varepsilon}+\tilde{\mathcal{L}}\tilde{\mathcal{H}} \\  & \lesssim \tilde{\mathcal{L}}\tilde{\mathcal{H}}.\end{aligned}$$
\end{proof}
\subsubsection{High Frequency Estimates}~\\
We rely on the high-frequency analysis carried out in \cite{RD}.

\begin{prop}
    For high frequencies, we have the estimate:
    $$\boxed{\|Z(t)\|_{\dot B_{2,1}^{\frac{d}{2}+1}}^h+\int_0^t \|Z\|_{\dot B_{2,1}^{\frac{d}{2}+1}}^h d\tau \leq \|Z_0\|_{\dot B_{2,1}^{\frac{d}{2}+1}}^h+C\int_0^t \tilde{\mathcal{L}}(\tau)\tilde{\mathcal{H}}(\tau) d\tau. }$$
\end{prop}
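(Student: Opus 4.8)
The plan is to transport the linear high-frequency analysis of the first Proposition to the nonlinear system \eqref{après changement de variable}, treating every genuinely nonlinear contribution as a source term to be absorbed into $\int_0^t \tilde{\mathcal{L}}\tilde{\mathcal{H}}\,d\tau$. First I would localize \eqref{après changement de variable} with $\dot\Delta_j$ for $j\geq -1$ and write the equations for $Z_j=(\tilde c_j,v_j)$ as the linearized system (note $\nabla c=\nabla\tilde c$, so the linear part coincides with \eqref{système linéarisé}) plus remainders. Keeping the transport terms $v\cdot\nabla\tilde c_j$ and $v\cdot\nabla v_j$ on the left-hand side, to be handled by integration by parts, the source terms $R_j=(R_j^1,R_j^2)$ collect the commutators $[\dot\Delta_j,v\cdot\nabla]\tilde c$ and $[\dot\Delta_j,v\cdot\nabla]v$, the localized products $\dot\Delta_j\big(\tfrac{\tilde c}{\overline c}\dive v\big)$ and $\dot\Delta_j\big(\tfrac{\tilde c}{\overline c}\nabla c\big)$, and the Poisson term $\varepsilon^2\dot\Delta_j\nabla(-\Delta)^{-1}G(\tilde c)$.

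Second, I would rerun the Lyapunov computation of the linear case verbatim: form $\mathcal{L}_j=\tfrac12\|\nabla\tilde c_j\|_{L^2}^2+\tfrac12\|\nabla v_j\|_{L^2}^2+\tfrac14\int_{\R^d}\nabla\tilde c_j\cdot v_j\,dx$, which for $j\geq 0$ is equivalent to $\|\nabla Z_j\|_{L^2}^2$ by Bernstein (the block $j=-1$ being treated separately as before), and derive $\frac{d}{dt}\mathcal{L}_j+c\,\mathcal{L}_j\lesssim \|\nabla Z_j\|_{L^2}\,\|\nabla R_j\|_{L^2}$. The transport terms yield, after integration by parts and commuting $\nabla$ with $v\cdot\nabla$, contributions bounded by $\|\nabla v\|_{L^\infty}\|\nabla Z_j\|_{L^2}^2$, which after summation are controlled by $\|v\|_{\dot B_{2,1}^{\frac{d}{2}+1}}$ and absorbed using the smallness assumption \eqref{hypothèse de petitesse}; the $\varepsilon^2\nabla(-\Delta)^{-1}$ term is harmless at high frequency exactly as in the linear proof. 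Dividing by $\mathcal{L}_j^{1/2}$, multiplying by $2^{j\frac{d}{2}}$ and summing over $j\geq -1$ (the gradient weights on both sides being compensated by the factor $2^{j}$) then yields $\frac{d}{dt}\|Z\|^h_{\dot B_{2,1}^{\frac{d}{2}+1}}+c\|Z\|^h_{\dot B_{2,1}^{\frac{d}{2}+1}}\lesssim \|R\|^h_{\dot B_{2,1}^{\frac{d}{2}+1}}$, so that it remains to bound the high-frequency Besov norm of the remainders.

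Third, I would estimate each remainder at regularity $\tfrac{d}{2}+1$. The commutators are handled by the standard commutator estimate of \cite{BCD}, which gives $\sum_{j}2^{j(\frac{d}{2}+1)}\|[\dot\Delta_j,v\cdot\nabla]f\|_{L^2}\lesssim \|\nabla v\|_{\dot B_{2,1}^{\frac{d}{2}}}\|f\|_{\dot B_{2,1}^{\frac{d}{2}+1}}$ for $f\in\{\tilde c,v\}$; the products are handled by the Besov product laws; and the composition term $G(\tilde c)$, which vanishes to second order at $0$, by the composition estimate $\|G(\tilde c)\|_{\dot B_{2,1}^{\frac{d}{2}}}\lesssim \|\tilde c\|_{\dot B_{2,1}^{\frac{d}{2}}}^2$ together with the order $-1$ gain of $\nabla(-\Delta)^{-1}$, which produces $\varepsilon^2\|\tilde c\|_{\dot B_{2,1}^{\frac{d}{2}}}^2$. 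The crucial point is that each such bound factors as one ``energy'' norm belonging to $\tilde{\mathcal{L}}$ times one ``dissipation'' norm belonging to $\tilde{\mathcal{H}}$, so that Lemma \ref{Inégalités fonctionnelle}, in particular the inequality $\varepsilon^2\|Z\|_{\dot B_{2,1}^{\frac{d}{2}}}^2+\|Z\|_{\dot B_{2,1}^{\frac{d}{2}+1}}^2\lesssim \tilde{\mathcal{L}}\tilde{\mathcal{H}}$, converts all of them into $\tilde{\mathcal{L}}\tilde{\mathcal{H}}$; integrating the differential inequality in time then gives the announced estimate.

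The main obstacle is organizing the paraproduct decomposition of the products and commutators at the critical index $\tfrac{d}{2}+1$ so that the output lands precisely in $\tilde{\mathcal{L}}\tilde{\mathcal{H}}$ rather than in an uncontrolled high-frequency $\dot B_{2,1}^{\frac{d}{2}+2}$ norm. This requires carefully assigning, in each Bony piece, the low-regularity factor to $\tilde{\mathcal{L}}$ (the $L^\infty_t$ norms) and the regularity-gaining factor to $\tilde{\mathcal{H}}$ (the $L^1_t$ norms), and using the three inequalities of Lemma \ref{Inégalités fonctionnelle} to pass between the very low, medium and high frequency regimes; by contrast the Poisson nonlinearity, being of lower order, is the easiest piece to control.
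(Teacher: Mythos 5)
Your overall architecture (spectral localization, a Lyapunov functional mimicking the linear analysis, nonlinearities pushed into a source bounded by $\tilde{\mathcal{L}}\tilde{\mathcal{H}}$ via Lemma \ref{Inégalités fonctionnelle}) matches the paper's, but the way you assign the quasilinear terms breaks the argument at the decisive point. You put the pressure-coupling terms $\dot\Delta_j\bigl(\tfrac{\tilde c}{\overline c}\dive v\bigr)$ and $\dot\Delta_j\bigl(\tfrac{\tilde c}{\overline c}\nabla c\bigr)$ entirely into the source $R_j$ and propose to estimate them by product laws at regularity $\tfrac d2+1$; with your functional the main energy term forces you to control $\|R\|^h_{\dot B_{2,1}^{\frac d2+1}}$, and by Lemma \ref{Produit espace de Besov} the best available bound is
$$\Bigl\|\tfrac{\tilde c}{\overline c}\,\dive v\Bigr\|_{\dot B_{2,1}^{\frac d2+1}}\lesssim \|\tilde c\|_{\dot B_{2,1}^{\frac d2}}\|v\|_{\dot B_{2,1}^{\frac d2+2}}+\|\tilde c\|_{\dot B_{2,1}^{\frac d2+1}}\|v\|_{\dot B_{2,1}^{\frac d2+1}}.$$
The high-frequency part $\|v\|^h_{\dot B_{2,1}^{\frac d2+2}}$ in the first product is controlled by nothing: $\tilde{\mathcal{H}}$ contains $\dot B_{2,1}^{\frac d2+2}$ only at medium frequencies and merely $\|Z\|^h_{\dot B_{2,1}^{\frac d2+1}}$ at high frequencies, and the solution has no better high-frequency regularity. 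This is a genuine one-derivative loss coming from the Bony piece in which the coefficient $\tilde c$ carries the low frequencies and $\dive v$ (resp. $\nabla\tilde c$) the high ones; no reassignment of factors between $\tilde{\mathcal{L}}$ and $\tilde{\mathcal{H}}$, and no smallness, can compensate a mismatch of one derivative. The paper's proof exists precisely to avoid this: in Lemma \ref{lemme hf partie 1} it writes $\dot\Delta_j\bigl(\tfrac{c}{\overline c}\dive v\bigr)=\tfrac{c}{\overline c}\dive v_j+[\tfrac{\tilde c}{\overline c},\dot\Delta_j]\dive v$ (similarly for $\tfrac{c}{\overline c}\nabla c$), keeps the principal parts on the left, and kills them by the symmetry through the integration by parts
$$\int_{\R^d}\frac{c}{\overline c}\bigl(\dive(v_j)\,\tilde c_j+\nabla\tilde c_j\cdot v_j\bigr)dx=-\int_{\R^d} v_j\,\tilde c_j\,\nabla\frac{c}{\overline c}\,dx,$$
so only commutators (which gain one derivative) and zero-order terms remain. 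Nonlinearities are treated as genuine sources $S=(S_1,S_2)$ only in the cross-term part $\frac{2^{-2j}}{4}\int\nabla\tilde c_j\cdot v_j\,dx$ of $\mathcal{L}_j$, whose weight $2^{-2j}$ means $S$ is effectively measured in $\dot B_{2,1}^{\frac d2}$, where the splitting $Z=Z^l+Z^h$ and the product laws do close to $\tilde{\mathcal{L}}\tilde{\mathcal{H}}$.

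Two secondary slips. First, your absorption of the transport contributions ``using the smallness assumption \eqref{hypothèse de petitesse}'' is not justified: that assumption gives $\|(\tilde c,v)\|_{\dot B_{2,1}^{\frac d2}}\ll1$, which does not make $\|\nabla v\|_{L^\infty}\lesssim\|v\|_{\dot B_{2,1}^{\frac d2+1}}$ small; the correct move (and the paper's) is to bound these contributions by $\|Z\|^2_{\dot B_{2,1}^{\frac d2+1}}\lesssim\tilde{\mathcal{L}}\tilde{\mathcal{H}}$, third inequality of Lemma \ref{Inégalités fonctionnelle}, with no absorption. Second, $G$ does \emph{not} vanish to second order at $0$: $G(\tilde c)=\tilde c\,f'(\overline c)+F(\tilde c)$ and only $F$ is quadratic. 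Its linear part must stay with the linear system (it is what produces $\varepsilon'$ and the term $\varepsilon'^2\nabla(-\Delta)^{-1}\tilde c_j$ on the left); only $\varepsilon^2\nabla(-\Delta)^{-1}F(\tilde c)$ may be a source, with $\varepsilon^2\|F(\tilde c)\|_{\dot B_{2,1}^{\frac d2}}\lesssim\varepsilon^2\|\tilde c\|^2_{\dot B_{2,1}^{\frac d2}}\lesssim\tilde{\mathcal{L}}\tilde{\mathcal{H}}$. As written, your source $\varepsilon^2\nabla(-\Delta)^{-1}G(\tilde c)$ contains a linear term that is not $O(\tilde{\mathcal{L}}\tilde{\mathcal{H}})$, and this contradicts your own statement that the linear part of your decomposition coincides with \eqref{système linéarisé}.
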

\begin{proof}
Let us denote $\displaystyle \mathcal{L}_j\mathrel{\mathop:}=\frac{1}{2}\|Z_j\|_{L^2}^2+\frac{2^{-2j}}{4}\int_{\R^d}\nabla \tilde{c}_j\cdot v_j dx$.

The following lemma will enable us to handle the first term of $\mathcal{L}_j$:

\begin{lemma}\label{lemme hf partie 1}
We have the following inequality with $\displaystyle Z=\begin{pmatrix} \tilde{c} \\ v \end{pmatrix}$ and $\displaystyle\varepsilon'=\sqrt{f(\overline{c})} \ \varepsilon$  : 
$$\boxed{\frac{1}{2}\frac{d}{dt}\|Z_j\|_{L^2}^2+\|v_j\|_{L^2}^2+\int_0^t \varepsilon'^2 \nabla (-\Delta)^{-1} \tilde{c}_j\cdot v_j dx \leq C a_j 2^{-j\left(\frac{d}{2}+1\right)}\big(\|\nabla Z\|_{\dot B_{2,1}^{\frac{d}{2}}}\|Z\|_{\dot B_{2,1}^{\frac{d}{2}+1}}+\varepsilon^2 \|\tilde{c}\|_{\dot B_{2,1}^{\frac{d}{2}}}^2\big)\|Z_j\|_{L^2}}$$

with  $(a_j)$ verifying  \begin{eqnarray}\label{condition a_j} \sum_{j\in\Z}a_j=1.\end{eqnarray}
\end{lemma}

\begin{proof}
Let us apply $\dot\Delta_j$ to the system \eqref{après changement de variable}, then we get: 

$$\left\{ \begin{array}{l} \displaystyle
\partial_t c_j +v\cdot \nabla\tilde{c}_j+\frac{c}{\overline{c}}\dive(v_j)+\int_0^t \varepsilon'^2 \nabla (-\Delta)^{-1} \tilde{c}_j\cdot v_j dx=[v\cdot \nabla,\dot \Delta_j]\tilde{c}+[\frac{\tilde{c}}{\overline{c}},\dot \Delta_j]\dive(v),
\\ \displaystyle
\partial_t v_j +v\cdot \nabla v_j+\frac{c}{\overline{c}}\nabla c_j+v_j+\varepsilon'^2 \nabla (-\Delta)^{-1} \tilde{c}_j=-\varepsilon^2 \dot \Delta_j\nabla(-\Delta)^{-1}F(\tilde{c})+[v\cdot \nabla,\dot \Delta_j]v+[\frac{\tilde{c}}{\overline{c}},\dot \Delta_j]\nabla \tilde{c}.
\end{array} \right.$$
By performing the scalar product with $Z_j\mathrel{\mathop:}= \begin{pmatrix} \tilde{c}_j \\ v_j \end{pmatrix}$ in $L^2(\R^d;\R^n)$, we get: \begin{eqnarray*}\frac{1}{2}\frac{d}{dt}\|Z_j\|_{L^2}^2+\|v_j\|_{L^2}^2 & = & -\int_{\R^d} \left( v\cdot \nabla\tilde{c}_j+\frac{c}{\overline{c}}\dive(v_j)\right)\tilde{c_j}+\int_{\R^d}[v\cdot \nabla,\dot \Delta_j]\tilde{c}\tilde{c}_j dx 
\\ & & +\int_{\R^d}[\frac{\tilde{c}}{\overline{c}},\dot\Delta_j]\dive(v)\tilde{c}_j dx-\int_{\R^d}\left(v\cdot \nabla v_j+\frac{c}{\overline{c}}\nabla c_j\right)v_j dx \\ & & +\int_{\R^d} \left[v\cdot \nabla,\dot \Delta_j\right]v v_j dx +\int_{\R^d} [\frac{\tilde{c}}{\overline{c}},\dot\Delta_j]\nabla\tilde{c}\cdot v_j dx \\ & &
-\varepsilon^2 \int_{\R^d}\dot \Delta_j \nabla (-\Delta)^{-1} F(\tilde{c})\cdot v_j dx. \end{eqnarray*}

In order to bound the right-hand side, we use the following facts:

\begin{itemize}
    \item[$\bullet$]
$\displaystyle\|\varepsilon^2 \nabla (-\Delta)^{-1} F(\tilde{c})\|_{\dot B_{2,1}^{\frac{d}{2}+1}}\leq \varepsilon^2 \|F(\tilde{c})\|_{\dot B_{2,1}^{\frac{d}{2}}}
\leq \varepsilon^2 C(\|\tilde{c}\|_{L^\infty}) \|\tilde{c}\|_{\dot B_{2,1}^{\frac{d}{2}}}^2\lesssim \varepsilon^2 \|\tilde{c}\|_{\dot B_{2,1}^{\frac{d}{2}}}^2.$

\item[$\bullet$] The following commutator estimates with $\displaystyle s'=\frac{d}{2}+1$: \begin{eqnarray*}\|[v\cdot \nabla,\dot \Delta_j]Z\|_{L^2}\leq C a_j 2^{-j s'} \|\nabla Z\|_{\dot B_{2,1}^{\frac{d}{2}}} \|Z\|_{\dot B_{2,1}^{s'}} & \text{where} \ \sum_{j\in\Z} a_j=1, \\
\|[\frac{\tilde{c}}{\overline{c}},\dot \Delta_j]\dive(v)\|_{L^2}\leq C a_j 2^{-j s'} \|\nabla Z\|_{\dot B_{2,1}^{\frac{d}{2}}} \|Z\|_{\dot B_{2,1}^{s'}}, \\
\|[\frac{\tilde{c}}{\overline{c}},\dot \Delta_j]\nabla \tilde{c}\|_{L^2}\leq C a_j 2^{-j s'} \|\nabla Z\|_{\dot B_{2,1}^{\frac{d}{2}}} \|Z\|_{\dot B_{2,1}^{s'}}.\end{eqnarray*}

\item[$\bullet$] The integration by parts: 
$$\int_{\R^d} v\cdot \nabla \tilde{c}_j\tilde{c}_jdx=-\int_{\R^d}\frac{1}{2}\dive(v)|\tilde{c}_j|^2 dx.$$
\item[$\bullet$] $\displaystyle\int_{\R^d} v\cdot \nabla\tilde{v}_j\tilde{v}_j dx=-\int_{\R^d}\frac{1}{2}\dive(v)|\tilde{v}_j|^2 dx,$

\item[$\bullet$] $\displaystyle \int_{\R^d}\frac{c}{\overline{c}} \left(\dive(v_j)\tilde{c}_j+\nabla \tilde{c}_j v_j\right) dx=-\frac{1}{\overline{c}}\int_{\R^d} v_j \nabla(c \tilde{c}_j)dx+\frac{1}{\overline{c}}\int_{\R^d} c\nabla \tilde{c}_j v_jdx=-\int_{\R^d} v_j \tilde{c_j} \nabla \frac{c}{\overline{c}} dx,$

hence using the injection $\displaystyle \dot B_{2,1}^{\frac{d}{2}}(\R^d)\hookrightarrow \mathcal{C}_b(\R^d)$ \begin{eqnarray*}\int_{\R^d} v\cdot \nabla \tilde{c}_j\tilde{c}_jdx+\int_{\R^d} v\cdot \nabla\tilde{v}_j\tilde{v}_j dx+\int_{\R^d} \frac{c}{\overline{c}}\left(\dive(v_j)\tilde{c}_j+\nabla \tilde{c}_j v_j\right) dx \\ \leq C a_j 2^{-js'} \|\nabla Z\|_{\dot B_{2,1}^{\frac{d}{2}}} \|Z\|_{\dot B_{2,1}^{s'}} \|Z_j\|_{L^2}. \end{eqnarray*}
\end{itemize}
Whence the result by putting together all the above inequalities.
\end{proof}

For the other term of $\mathcal{L}_j$, look at \eqref{après changement de variable} as: 
$$\left\{\begin{array}{l} \displaystyle
     \partial_t \tilde{c} +\dive(v)=-v\cdot \nabla c -\frac{\tilde{c}}{\overline{c}}\dive(v)  
     \\ \displaystyle \partial_t v+\nabla \tilde{c}+\varepsilon'^2\nabla (-\Delta)^{-1} \tilde c+v=- v\cdot \nabla v -\frac{\tilde{c}}{\overline{c}} \nabla \tilde{c}-\varepsilon^2 \nabla (-\Delta)^{-1}F(\tilde{c})
\end{array} \right.$$  \begin{eqnarray}\label{epsilon'} \text{where} \  \varepsilon'=\sqrt{f(\overline{c})} \ \varepsilon. \end{eqnarray}

Let us denote $S_1\mathrel{\mathop:}=-v\cdot \nabla c -\frac{\tilde{c}}{\overline{c}}\dive(v) $ and $S_2\mathrel{\mathop:}=- v\cdot \nabla v -\frac{\tilde{c}}{\overline{c}} \nabla \tilde{c}-\varepsilon^2 \nabla (-\Delta)^{-1}F(\tilde{c})$ as well as $S\mathrel{\mathop:}=(S_1,S_2)$.

Analogously to linear analysis, we obtain: $$\displaylines{\frac{d}{dt}\left(\int_{\R^d}\nabla \tilde{c}_j\cdot v_j dx\right)+\int_{\R^d}\nabla \tilde{c}_j\cdot v_j dx-\|\dive v_j\|_{L^2}^2+\|\nabla \tilde{c}_j\|_{L^2}^2+\varepsilon'^2 \|\tilde{c}_j\|_{L^2}=\int_{\R^d}\Re(\dot \Delta_j S_1\cdot v)dx \hfill\cr\hfill+\int_{\R^d} \Re(\nabla c_j\cdot \dot\Delta_j S_2)dx.}$$

By Cauchy-Schwarz and Bernstein inequalities, we see  that $\displaystyle \mathcal{L}_j\mathrel{\mathop:}=\frac{1}{2}\|Z_j\|_{L^2}^2+\frac{2^{-2j}}{4}\int_{\R^d}\nabla \tilde{c}_j\cdot v_j dx$ verifies : 
\begin{equation}\label{Inequality on Lj}
\frac{3}{8}\|Z_j\|_{L^2}^2 \leq\mathcal{L}_j\leq \frac{1}{2}\|Z_j\|_{L^2}^2+\frac{2^{-2j}}{8}\|\nabla \tilde{c}_j\|_{L^2}^2+\frac{1}{8}\|v_j\|_{L^2}^2\leq \frac{5}{8}\|Z_j\|_{L^2}^2.\end{equation}
We then summarize the previous inequality with the inequality of the lemma \ref{lemme hf partie 1}: 
$$\displaylines{\frac{d}{dt}\mathcal{L}_j+\|v_j\|_{L^2}^2+\int_0^t \varepsilon'^2 \nabla (-\Delta)^{-1} \tilde{c}_j\cdot v_j dx-\frac{2^{-2j}}{4}\|\dive v_j\|_{L^2}^2+\frac{2^{-2j}}{4}\|\nabla c_j\|_{L^2}^2+\frac{\varepsilon'^2 2^{-2j}}{4}\|\tilde{c}_j\|_{L^2}^2+ \frac{2^{-2j}}{4} \int_{\R^d} \nabla c_j\cdot v_jdx \cr \leq C a_j 2^{-js'} \left(\|\nabla Z\|_{\dot B_{2,1}^{\frac{d}{2}}} \|Z\|_{\dot B_{2,1}^{s'}}+\varepsilon^2 \|\tilde{c}\|_{\dot B_{2,1}^{\frac{d}{2}}}^2\right) \|Z_j\|_{L^2} +\frac{2^{-2j}}{4}(\|\dot \Delta_j S_1\|_{L^2}\|v_j\|_{L^2}+\|\dot \Delta_j S_2\|_{L^2}\|\tilde{c}_j\|_{L^2}).}$$

But owing to the Cauchy-Schwarz and Bernstein inequalities, we have 
$$\displaystyle \int_0^t \varepsilon'^2 \nabla (-\Delta)^{-1} \tilde{c}_j\cdot v_j dx+\frac{2^{-2j}}{4}\int_{\R^d}\nabla \tilde{c}_j\cdot v_jdx\geq -\frac{1-2\varepsilon'^2}{8}\|\tilde{c}_j\|_{L^2}^2-\frac{1-2\varepsilon'^2}{8}\|v_j\|_{L^2}^2$$ which allows us to obtain, thanks to \eqref{Inequality on Lj},
$$\begin{aligned} \|v_j\|_{L^2}+\int_0^t \varepsilon'^2 \nabla (-\Delta)^{-1} \tilde{c}_j\cdot v_j dx-\frac{2^{-2j}}{4}\|\dive v_j\|_{L^2}^2+\frac{1}{4}\|c_j\|_{L^2}^2&+\frac{\varepsilon'^2 2^{-2j}}{4}\|\tilde{c}_j\|_{L^2}^2+\frac{2^{-2j}}{4} \int_{\R^d} \nabla c_j\cdot v_j \ dx \\ &\geq \frac{5-2\varepsilon'^2}{8}\|v_j\|_{L^2}^2+\frac{1-2\varepsilon'^2}{8}\|\tilde{c}_j\|_{L^2}^2\\&\geq \frac{1-2\varepsilon'^2}{5}\mathcal{L}_j.\end{aligned}$$ 
We then get for $\displaystyle\varepsilon'\leq {1}/{2}$ : 
\begin{eqnarray} \label{hf partie2}\frac{d}{dt}\mathcal{L}_j+\mathcal{L}_j\lesssim \left(2^{-j}\|\dot \Delta_j S\|_{L^2}+a_j 2^{-js'}(\|Z\|_{\dot B_{2,1}^{s'}}^2+\varepsilon^2 \|\tilde{c}\|_{\dot B_{2,1}^{\frac{d}{2}}}^2)\right)\|Z_j\|_{L^2}.
\end{eqnarray}

By decomposing $Z=Z^l+Z^h$ where $\widehat{Z^l}=\widehat Z \mathbb{1}_{|\xi|\leq 1}$ and $\widehat{Z^h}=\widehat Z \mathbb{1}_{|\xi|\geq 1}$ as well as using the lemmas \ref{Inégalités fonctionnelle} and \ref{Produit espace de Besov}, we have :

$$\displaystyle \left\{\begin{array}{l} \|Z \nabla Z\|_{\dot B_{2,1}^\frac{d}{2}}^h\leq \|Z \nabla Z^l\|_{\dot B_{2,1}^{\frac{d}{2}+1}}+\|Z \nabla Z^h\|_{\dot B_{2,1}^{\frac{d}{2}}}\lesssim \|Z\|_{\dot B_{2,1}^{\frac{d}{2}+1}}^2+\|Z\|_{\dot B_{2,1}^{\frac{d}{2}}}\|Z\|_{\dot B_{2,1}^{\frac{d}{2}+2}}^l+\|Z\|_{\dot B_{2,1}^{\frac{d}{2}}}\|Z\|_{\dot B_{2,1}^{\frac{d}{2}+1}}^h \lesssim \tilde{\mathcal{L}}\tilde{\mathcal{H}} \\ \displaystyle\|-\varepsilon^2 \nabla (-\Delta)^{-1} F(\tilde{c})\|_{\dot B_{2,1}^{\frac{d}{2}}}^h\lesssim \varepsilon^2 \|F(\tilde{c})\|_{\dot B_{2,1}^{\frac{d}{2}-1}}^h\leq \varepsilon^2 \|F(\tilde{c})\|_{\dot B_{2,1}^{\frac{d}{2}}}\lesssim \varepsilon^2 \|Z\|_{\dot B_{2,1}^{\frac{d}{2}}}^2\lesssim \tilde{\mathcal{L}}\tilde{\mathcal{H}}.\end{array}\right.$$

~

Consequently, we have : $$2^{js'}\sqrt{\mathcal{L}_j(t)}+c 2^{js'} \int_0^t \sqrt{\mathcal{L}_j} d\tau \leq 2^{js'} \sqrt{\mathcal{L}_j(0)}+C\int_0^t a_j \tilde{\mathcal{L}}(\tau)\tilde{\mathcal{H}}(\tau) d\tau, \quad  \text{with} \ s'=\frac{d}{2}+1$$

We deduce, by summing on $j\in\N$, the estimate for the high frequencies of the proposition.
\end{proof}

\subsubsection{Damped mode}
As explained in \cite{RD}, at the low frequency level, we have a loss of information on the part undergoing dissipation (here $v$). To overcome this loss, Danchin highlighted "the damped mode" to recover the missing information: we then pose $W\mathrel{\mathop:}=-\partial_t v$.

However here we cannot proceed as in linear analysis: some non-linear terms do not allow us to conclude.

So we are going to proceed differently here: we will first study the damped mode, which will allow us to have very precise estimates. In a second step, we will study the equation on $c$ using this damped mode and finally we will get the information on $v$ that we need using the estimates on $W$ and $c$.

From now on, let us take the constants (other than $\varepsilon$) appearing in our system equal to $1$ 
to simplify the  presentation: they play no role in what will follow.

Let us first look at this damped mode at the high frequencies.
\begin{lemma}\label{mode amorti hf}
We have the following estimate for the damped mode:
$$\|W(t)\|_{\dot B_{2,1}^{\frac{d}{2}}}^h+\int_0^t \|W\|_{\dot B_{2,1}^{\frac{d}{2}}}^h d\tau  \lesssim\displaystyle \tilde{\mathcal{L}}(t)+\int_0^t \tilde{\mathcal{H}}(\tau)d\tau+\int_0^t \tilde{\mathcal{L}}(\tau)\tilde{\mathcal{H}}(\tau)d\tau.$$
\end{lemma}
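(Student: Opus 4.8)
The plan is to bypass the evolution equation $\partial_t W+W=\dots$ that drives the low-frequency analysis and instead read the damped mode off algebraically: at high frequencies the system is already fully dissipative, so no genuine gain from the damped structure is needed. From the velocity equation in the form displayed just before \eqref{epsilon'}, using $W=-\partial_t v$ and that all structural constants are normalized to $1$, one obtains
$$W=v+\nabla \tilde c+\varepsilon'^2\nabla(-\Delta)^{-1}\tilde c+v\cdot\nabla v+\tilde c\,\nabla \tilde c+\varepsilon^2\nabla(-\Delta)^{-1}F(\tilde c),$$
so the estimate splits into a linear block $v+\nabla\tilde c+\varepsilon'^2\nabla(-\Delta)^{-1}\tilde c$ and a quadratic block $\mathcal N:=v\cdot\nabla v+\tilde c\,\nabla\tilde c+\varepsilon^2\nabla(-\Delta)^{-1}F(\tilde c)$, both to be measured in $\|\cdot\|_{\dot B_{2,1}^{d/2}}^h$.

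For the linear block I would only use that on high frequencies lowering the regularity index helps: $\|v\|_{\dot B_{2,1}^{d/2}}^h\lesssim\|v\|_{\dot B_{2,1}^{d/2+1}}^h$, $\|\nabla\tilde c\|_{\dot B_{2,1}^{d/2}}^h=\|\tilde c\|_{\dot B_{2,1}^{d/2+1}}^h$ and $\varepsilon'^2\|\nabla(-\Delta)^{-1}\tilde c\|_{\dot B_{2,1}^{d/2}}^h\lesssim\|\tilde c\|_{\dot B_{2,1}^{d/2+1}}^h$. Since $\|(\tilde c,v)\|_{\dot B_{2,1}^{d/2+1}}^h$ is a summand of both $\tilde{\mathcal L}(t)$ and $\tilde{\mathcal H}(t)$, the supremum-in-time part of the linear block is $\lesssim\tilde{\mathcal L}(t)$ while its time integral is $\lesssim\int_0^t\tilde{\mathcal H}\,d\tau$; this already produces the first two terms of the announced bound.

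The quadratic block must be handled asymmetrically. For the time-integrated norm the product estimates already performed for the high-frequency proposition apply verbatim — splitting each factor into low and high frequencies and invoking Lemma~\ref{Produit espace de Besov} together with Lemma~\ref{Inégalités fonctionnelle} gives $\|\mathcal N\|_{\dot B_{2,1}^{d/2}}^h\lesssim\tilde{\mathcal L}\tilde{\mathcal H}$, whence $\int_0^t\|\mathcal N\|_{\dot B_{2,1}^{d/2}}^h\,d\tau\lesssim\int_0^t\tilde{\mathcal L}\tilde{\mathcal H}\,d\tau$, the third term. For the supremum-in-time part I cannot afford a pointwise factor $\tilde{\mathcal H}(t)$, so instead I would invoke the algebra property of $\dot B_{2,1}^{d/2}$ together with the two bounds $\|(\tilde c,v)\|_{\dot B_{2,1}^{d/2}}\lesssim\tilde{\mathcal L}$ and $\|(\tilde c,v)\|_{\dot B_{2,1}^{d/2+1}}\lesssim\tilde{\mathcal L}$ to get $\|\mathcal N\|_{\dot B_{2,1}^{d/2}}^h\lesssim\tilde{\mathcal L}^2$, which under the smallness hypothesis \eqref{hypothèse de petitesse} is $\lesssim\tilde{\mathcal L}(t)$.

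The main obstacle, and the only point requiring care, is the second of these two bounds at the very low frequencies: there $(\tilde c,v)$ is controlled in $\tilde{\mathcal L}$ only at regularity $d/2-1$ (through $\varepsilon\tilde c$, $\dive v$ and $\mathcal P v$), so one must spend the available low-frequency gain (a factor $2^{2j}\le\varepsilon^2$ for the density, resp.\ $2^{j}\le\varepsilon$ for the velocity, valid on $\{2^j\le\varepsilon\}$) to climb from $d/2-1$ up to $d/2+1$, and the same $\varepsilon$-weighting must be tracked through the $\varepsilon'^2\nabla(-\Delta)^{-1}$ contributions. Keeping straight which product bound feeds which half of the statement — $\lesssim\tilde{\mathcal L}^2$ for the supremum and $\lesssim\tilde{\mathcal L}\tilde{\mathcal H}$ for the time integral — is exactly what makes the right-hand side come out as $\tilde{\mathcal L}(t)+\int_0^t\tilde{\mathcal H}\,d\tau+\int_0^t\tilde{\mathcal L}\tilde{\mathcal H}\,d\tau$.
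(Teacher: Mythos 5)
Your proposal is correct and matches the paper's own proof in all essentials: the paper likewise bypasses the evolution equation for $W$, reads it off algebraically as $W=v+\nabla\tilde c+\varepsilon'^2\nabla(-\Delta)^{-1}\tilde c+v\cdot\nabla v+\tilde c\,\nabla\tilde c+\varepsilon^2\nabla(-\Delta)^{-1}F(\tilde c)$, applies the algebra property of $\dot B_{2,1}^{\frac{d}{2}}$, and concludes with Lemma \ref{Inégalités fonctionnelle}, getting $\tilde{\mathcal L}(t)$ for the supremum part and $\int_0^t\tilde{\mathcal H}\,d\tau+\int_0^t\tilde{\mathcal L}\tilde{\mathcal H}\,d\tau$ for the integrated part. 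The one refinement to make is in your supremum bound for the quadratic block: the hypothesis \eqref{hypothèse de petitesse} controls $\|(\tilde c,v)\|_{\dot B_{2,1}^{\frac{d}{2}}}$ but not $\tilde{\mathcal L}$ itself (which contains the high-frequency $\dot B_{2,1}^{\frac{d}{2}+1}$ norm), so rather than writing $\tilde{\mathcal L}^2\lesssim\tilde{\mathcal L}$ you should place the smallness on the $\dot B_{2,1}^{\frac{d}{2}}$ factor of each product, e.g. $\|v\cdot\nabla v\|_{\dot B_{2,1}^{\frac{d}{2}}}\lesssim\|v\|_{\dot B_{2,1}^{\frac{d}{2}}}\|v\|_{\dot B_{2,1}^{\frac{d}{2}+1}}\ll\tilde{\mathcal L}(t)$, which is what the paper's terse conclusion implicitly does.
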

\begin{proof}
    By definition of $W$, we have : $$W= v+\nabla c + \varepsilon^2 \nabla(-\Delta)^{1}\tilde{c} +v\cdot v+\tilde{c}\nabla \tilde{c}+\varepsilon^2 \nabla (-\Delta)^{-1} F(\tilde{c}).$$

    We then have (by using the fact that $\displaystyle\dot B_{2,1}^{\frac{d}{2}}$ is a multiplicative algebra) : $$\displaylines{\|W(t)\|_{\dot B_{2,1}^{\frac{d}{2}}}^h\lesssim \|v(t)\|_{\dot B_{2,1}^{\frac{d}{2}}}^h+\|\tilde{c}(t)\|_{\dot B_{2,1}^{\frac{d}{2}+1}}^h+\varepsilon^2 \|\tilde{c}(t)\|_{\dot B_{2,1}^{\frac{d}{2}-1}}^h+\|v(t)\|_{\dot B_{2,1}^{\frac{d}{2}}}^h \|v(t)\|_{\dot B_{2,1}^{\frac{d}{2}+1}}^h +\|\tilde{c}(t)\|_{\dot B_{2,1}^{\frac{d}{2}}}^h \|\tilde{c}(t)\|_{\dot B_{2,1}^{\frac{d}{2}+1}}^h \hfill\cr\hfill +\varepsilon^2 \|\tilde{c}(t)\|_{\dot B_{2,1}^{\frac{d}{2}}}^h \|\tilde{c}(t)\|_{\dot B_{2,1}^{\frac{d}{2}-1}}^h.}$$

    By the lemma \ref{Inégalités fonctionnelle}, we have : $\displaystyle\|W(t)\|_{\dot B_{2,1}^{\frac{d}{2}}}^h\lesssim \tilde{\mathcal{L}}(t)$ and $\displaystyle \int_0^t \|W(\tau)\|_{\dot B_{2,1}^{\frac{d}{2}}}^h  d\tau\lesssim \int_0^t \tilde{\mathcal{H}}(\tau)d\tau+\int_0^t \tilde{\mathcal{L}}(\tau)\tilde{\mathcal{H}}(\tau)d\tau$.

\end{proof}

As $\partial_t v+W=0$, thus we have :
$$\begin{aligned}
 \partial_t W + W &  =  \partial_t v+W+\partial_t v \cdot \nabla v+v\cdot \nabla(\partial_t v)+\nabla (\partial_t c)+(\partial_t c) \nabla c +\tilde c \nabla(\partial_t c)+\varepsilon^2 \nabla(-\Delta)^{-1}\partial_t c \\ & \quad + \varepsilon^2 \nabla (-\Delta)^{-1}F'(\tilde c) \partial_t c  \\
& = -W\cdot \nabla v-v\cdot \nabla W-\nabla(v\cdot \nabla \tilde{c})-\nabla \dive v-\nabla(\tilde{c}\dive v)-v\cdot \nabla \tilde c \nabla \tilde c -\dive v \nabla \tilde c -\tilde c \dive v \nabla \tilde c 
\\ & \quad -\tilde c \nabla (v\cdot \nabla \tilde c)-\tilde c(\nabla \dive v)-\tilde c \nabla(\dive v \nabla \tilde c)-\varepsilon^2\nabla (-\Delta)^{-1}(v\cdot \nabla \tilde c)-\varepsilon^2 \nabla (-\Delta)^{-1}(\dive v)\\ & \quad -\varepsilon^2 \nabla (-\Delta)^{-1}(\tilde{c} \dive v).
\end{aligned}$$

\begin{lemma}\label{lemme mode amorti}
 The following estimate holds true:  \begin{equation}\label{mode amorti}
    \|W(t)\|_{\dot B_{2,1}^{\frac{d}{2}}}^l+\int_0^t \|W\|_{\dot B_{2,1}^{\frac{d}{2}}}^ld\tau  \lesssim\displaystyle \|W_0\|_{\dot B_{2,1}}^l+\int_0^t \tilde{\mathcal{L}}(\tau)\tilde{\mathcal{H}}(\tau)d\tau+\int_0^t \|v\|_{\dot B_{2,1}^{\frac{d}{2}+2}}^l d\tau+\varepsilon^2 \int_0^t \|\dive v\|_{\dot B_{2,1}^{\frac{d}{2}-1}}^ld\tau. \end{equation}
\end{lemma}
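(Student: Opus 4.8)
\subsection*{Proof plan for Lemma~\ref{lemme mode amorti}}

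The plan is to localize the evolution equation for $W$ in frequency and to exploit the damping furnished by the zeroth-order term $+W$ on the left-hand side. First I would apply $\dot\Delta_j$ to $\partial_t W+W=\text{RHS}$ and take the $L^2$ scalar product with $W_j$. The only term requiring care at this stage is the transport term $-v\cdot\nabla W$: writing $\dot\Delta_j(v\cdot\nabla W)=v\cdot\nabla W_j+[\dot\Delta_j,v\cdot\nabla]W$, the local part contributes $\frac12\int_{\R^d}\dive v\,|W_j|^2\,dx$ after integration by parts, which is harmless since $\|\dive v\|_{L^\infty}\lesssim\|(\tilde c,v)\|_{\dot B_{2,1}^{\frac{d}{2}+1}}$ is small under \eqref{hypothèse de petitesse}, while the commutator $\|[\dot\Delta_j,v\cdot\nabla]W\|_{L^2}$ is absorbed by a standard commutator estimate. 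This produces, for each $j$, a differential inequality of the shape $\frac12\frac{d}{dt}\|W_j\|_{L^2}^2+\|W_j\|_{L^2}^2\lesssim\big(\|\dot\Delta_j R\|_{L^2}+\text{comm}_j\big)\|W_j\|_{L^2}$, where $R$ gathers every source term other than $-v\cdot\nabla W$.

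Next I would invoke the ODE Lemma~\ref{lemme edo}, whose role is precisely to turn this damped differential inequality into $\|W_j(t)\|_{L^2}+\int_0^t\|W_j\|_{L^2}\,d\tau\lesssim\|W_{0,j}\|_{L^2}+\int_0^t\|\dot\Delta_j R\|_{L^2}\,d\tau$ (the unit damping coefficient being what drives the time integral). Multiplying by $2^{j\frac d2}$ and summing over the low frequencies $j\leq-1$ then yields the left-hand side of \eqref{mode amorti}, the initial contribution $\|W_0\|^l_{\dot B_{2,1}^{\frac d2}}$, and the summed source $\int_0^t\|R\|^l_{\dot B_{2,1}^{\frac d2}}\,d\tau$. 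It remains to decompose $R$ and estimate each piece.

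The source $R$ contains exactly two genuinely linear contributions, $-\nabla\dive v$ and $-\varepsilon^2\nabla(-\Delta)^{-1}\dive v$. Since $\nabla\dive$ raises regularity by two while $\varepsilon^2\nabla(-\Delta)^{-1}$ lowers it by one up to the factor $\varepsilon^2$, these produce precisely the two terms $\int_0^t\|v\|^l_{\dot B_{2,1}^{\frac d2+2}}\,d\tau$ and $\varepsilon^2\int_0^t\|\dive v\|^l_{\dot B_{2,1}^{\frac d2-1}}\,d\tau$ of \eqref{mode amorti}; being non-absorbable, they are kept as such. All remaining terms are quadratic or cubic in $(\tilde c,v,W)$; for the two terms carrying $W$ I would use the expression $W=v+\nabla\tilde c+\varepsilon^2\nabla(-\Delta)^{-1}\tilde c+(\text{quadratic})$ arising from the definition of the damped mode to recast them through $(\tilde c,v)$. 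Applying the product law of Lemma~\ref{Produit espace de Besov} and then the functional inequalities of Lemma~\ref{Inégalités fonctionnelle} bounds the low-frequency $\dot B_{2,1}^{\frac d2}$ norm of each such term by $\tilde{\mathcal{L}}(\tau)\tilde{\mathcal{H}}(\tau)$, giving the term $\int_0^t\tilde{\mathcal{L}}\tilde{\mathcal{H}}\,d\tau$.

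I expect the main obstacle to be the nonlinear bookkeeping in this last step. The functionals $\tilde{\mathcal{L}}$ and $\tilde{\mathcal{H}}$ carry distinct regularity indices on the very-low ($\frac d2-1$), medium ($\frac d2$) and high ($\frac d2+1$) bands, so each product must be split band by band and its factors matched to the correct component norm in order to close into the single product $\tilde{\mathcal{L}}\tilde{\mathcal{H}}$ rather than into an uncontrolled $\tilde{\mathcal{H}}^2$. Checking that the cubic terms and the two $\varepsilon^2$-weighted nonlinear terms also fit this quadratic-in-energy bound, using the smallness \eqref{hypothèse de petitesse}, is the delicate accounting that makes the estimate close.
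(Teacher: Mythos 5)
Your skeleton — spectral localization, $L^2$ pairing with $W_j$, Lemma~\ref{lemme edo}, summation over $j\le -1$, and keeping the two linear leftovers $\int_0^t\|v\|^{l}_{\dot B_{2,1}^{\frac d2+2}}d\tau$ and $\varepsilon^2\int_0^t\|\dive v\|^{l}_{\dot B_{2,1}^{\frac d2-1}}d\tau$ coming from $-\nabla\dive v$ and $-\varepsilon^2\nabla(-\Delta)^{-1}\dive v$ — is exactly the paper's, and your commutator treatment of $-v\cdot\nabla W$ is a harmless variant of the paper's product-law treatment of that term. The genuine gap is your plan for the terms carrying $W$: you propose to substitute $W=v+\nabla\tilde c+\varepsilon^2\nabla(-\Delta)^{-1}\tilde c+(\text{quadratic})$ and to bound the resulting products of $(\tilde c,v)$ by $\tilde{\mathcal L}\tilde{\mathcal H}$. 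This fails on the very first term it creates, $v\cdot\nabla v$. Indeed $\|v\cdot\nabla v\|^{l}_{\dot B_{2,1}^{\frac d2}}\lesssim \|v\|_{\dot B_{2,1}^{\frac d2}}\|v\|_{\dot B_{2,1}^{\frac d2+1}}$, and at this stage no factor of $v$ has non-degenerate $L^1$-in-time control: the low-frequency dissipation of $v$ contained in $\tilde{\mathcal H}$ is $\varepsilon^2\|\dive v\|^{l^-,\varepsilon}_{\dot B_{2,1}^{\frac d2-1}}$ at very low frequencies and $\|v\|^{l^+,\varepsilon}_{\dot B_{2,1}^{\frac d2+2}}$ at medium ones. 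Testing on a potential field concentrated at frequencies $|\xi|\sim\varepsilon$ of size $A=\|v\|_{\dot B_{2,1}^{\frac d2}}$ gives $\|v\|_{\dot B_{2,1}^{\frac d2}}\|v\|_{\dot B_{2,1}^{\frac d2+1}}\sim\varepsilon A^2$, while $\tilde{\mathcal L}\tilde{\mathcal H}\sim\varepsilon^2A^2$ and $\|v\|^{l}_{\dot B_{2,1}^{\frac d2+2}}\sim\varepsilon^2A$: you lose a factor $\varepsilon^{-1}$ that no $\varepsilon$-independent smallness can compensate. The deeper problem is circularity: obtaining non-degenerate $L^1$-in-time bounds on the low frequencies of $v$ is precisely what the damped-mode analysis is designed to produce, and it is achieved only later (Lemma~\ref{lemme estimation v}) \emph{using} the present lemma; trading $W$ back for $v$ forfeits exactly the cancellation that makes $W$ a good unknown.

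What the paper does instead — and what is missing from your plan — is to keep $W$ intact in $-W\cdot\nabla v$ and $-v\cdot\nabla W$ and split \emph{$W$}, not $v$, in frequency: the contributions involving $W^{l}$ are bounded by $\|v\|_{\dot B_{2,1}^{\frac d2+1}}\|W\|^{l}_{\dot B_{2,1}^{\frac d2}}$ (resp. $\|v\|_{\dot B_{2,1}^{\frac d2}}\|W\|^{l}_{\dot B_{2,1}^{\frac d2}}$) and \emph{absorbed} by the left-hand side $\int_0^t\|W\|^{l}_{\dot B_{2,1}^{\frac d2}}d\tau$ thanks to the smallness of the prefactor, while the contributions involving $W^{h}$ are controlled by the high-frequency damped-mode estimate of Lemma~\ref{mode amorti hf}, proved just before, which yields $\|W\|^{h}_{\dot B_{2,1}^{\frac d2}}\lesssim\tilde{\mathcal L}$ and $\int_0^t\|W\|^{h}_{\dot B_{2,1}^{\frac d2}}d\tau\lesssim\int_0^t\tilde{\mathcal H}\,d\tau+\int_0^t\tilde{\mathcal L}\tilde{\mathcal H}\,d\tau$; that lemma never appears in your proposal, yet it is the only legitimate place where your substitution idea is used (it is how Lemma~\ref{mode amorti hf} itself is proved, and it works there because at high frequencies $(\tilde c,v)$ does have non-degenerate $L^1$-in-time bounds). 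A smaller inaccuracy: your claim that $\|\dive v\|_{L^\infty}\lesssim\|(\tilde c,v)\|_{\dot B_{2,1}^{\frac d2+1}}$ is small "under \eqref{hypothèse de petitesse}" does not follow from that hypothesis, which only bounds the $\dot B_{2,1}^{\frac d2}$ norm; the needed smallness comes from the bootstrap on $\tilde{\mathcal L}$, the same mechanism the paper invokes for its absorption step.
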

\begin{proof}
By applying $\dot \Delta_j$ to the previous equation verified by $W$, taking the scalar product with $W_j$, multiplying by $2^{j\frac{d}{2}}$, summing up on $j\in\Z^-$ and using the lemma \ref{lemme edo}, we get owing to the product laws of the lemma \ref{Produit espace de Besov} (each non-linear term in the right-hand side appears in the same order as the following estimate): 
    $$\displaylines{\|W(t)\|_{\dot B_{2,1}^{\frac{d}{2}}}^l +\int_0^t \|W\|_{\dot B_{2,1}^{\frac{d}{2}}}^l d\tau \lesssim \|W_0\|_{\dot B_{2,1}^{\frac{d}{2}}}^l+\int_0^t \|W\|_{\dot B_{2,1}^{\frac{d}{2}}} \|v\|_{\dot B_{2,1}^{\frac{d}{2}+1}} d\tau+\int_0^t \|v\cdot W\|_{\dot B_{2,1}^{\frac{d}{2}-1}}^l d\tau  
    \cr\hfill
    +\int_0^t \left(\|\nabla v\cdot \nabla \tilde c\|_{\dot B_{2,1}^{\frac{d}{2}}}+\|v\cdot \nabla\nabla c\|_{\dot B_{2,1}^{\frac{d}{2}}}^l\right)d\tau +\int_0^t \|v\|_{\dot B_{2,1}^{\frac{d}{2}+2}}^ld\tau  
    \cr\hfill
    +\int_0^t \left(\|\nabla \tilde c \dive v\|_{\dot B_{2,1}^{\frac{d}{2}}}+\|\tilde c \nabla \dive v\|_{\dot B_{2,1}^{\frac{d}{2}}} \right)d\tau+\int_0^t \|v\|_{\dot B_{2,1}^{\frac{d}{2}}}\|\tilde c\|_{\dot B_{2,1}^{\frac{d}{2}+1}}^2d\tau 
    \cr\hfill 
    +\int_0^t \|v\|_{\dot B_{2,1}^{\frac{d}{2}+1}}\|\tilde c\|_{\dot B_{2,1}^{\frac{d}{2}+1}}d\tau+\int_0^t \|\tilde c\|_{\dot B_{2,1}^{\frac{d}{2}+1}}\|v\|_{\dot B_{2,1}^{\frac{d}{2}+1}}\|\tilde c\|_{\dot B_{2,1}^{\frac{d}{2}}} d\tau 
    \cr\hfill
    +\int_0^t \|\tilde c \nabla(v\cdot \nabla \tilde c)\|_{\dot B_{2,1}^{\frac{d}{2}}}^ld\tau+ \int_0^t \|\tilde c(\nabla \dive v)\|_{\dot B_{2,1}^{\frac{d}{2}}}^ld\tau +\int_0^t \|\tilde c \nabla(\dive v \nabla \tilde c)\|_{\dot B_{2,1}^{\frac{d}{2}}}^ld\tau
    \cr\hfill
    +\varepsilon^2\int_0^t \|v\|_{\dot B_{2,1}^{\frac{d}{2}}}\|\tilde c\|_{\dot B_{2,1}^{\frac{d}{2}}}d\tau +\int_0^t \varepsilon^2 \|\dive v\|_{\dot B_{2,1}^{\frac{d}{2}-1}}^ld\tau +\varepsilon^2 \int_0^t \|\tilde c\|_{\dot B_{2,1}^{\frac{d}{2}}}\| v\|_{\dot B_{2,1}^{\frac{d}{2}}} d\tau.}$$

   Let us estimate one by one the terms appearing in the right-hand side: 
    \begin{itemize}
        \item[$\bullet$]  $\displaystyle \int_0^t \|W\|_{\dot B_{2,1}^{\frac{d}{2}}}^l \|v\|_{\dot B_{2,1}^{\frac{d}{2}+1}} d\tau$ can be absorbed by the left-hand side as we know that $\|Z\|_{L^\infty(\dot B_{2,1}^{\frac{d}{2}})}$ is small;
        \item[$\bullet$]  $\displaystyle \int_0^t \|W\|_{\dot B_{2,1}^{\frac{d}{2}}}^h \|v\|_{\dot B_{2,1}^{\frac{d}{2}+1}} d\tau\lesssim \int_0^t \tilde{\mathcal{L}}(\tau)\tilde{\mathcal{H}}(\tau)d\tau$ by the lemma \ref{mode amorti hf}; 
        \item[$\bullet$] $\displaystyle \int_0^t \|v\cdot W\|_{\dot B_{2,1}^{\frac{d}{2}-1}}^l d\tau\lesssim \int_0^t \|v\|_{\dot B_{2,1}^{\frac{d}{2}}}(\|W\|_{\dot B_{2,1}^{\frac{d}{2}}}^l+\|W\|_{\dot B_{2,1}^{\frac{d}{2}}}^h)d\tau$ has the low frequency part absorbed by the left-hand side and the other part below $\displaystyle \int_0^t \tilde{\mathcal{L}}(\tau)\tilde{\mathcal{H}}(\tau)d\tau$ by the lemma \ref{mode amorti hf};
        \item[$\bullet$] By using the lemma \ref{Inégalités fonctionnelle}
        and the fact that $\|Z\|_{L^\infty(\dot B_{2,1}^{\frac{d}{2}})}$ is small, we have : $$\displaylines{\int_0^t \bigg(\|\nabla v\cdot \nabla c\|_{\dot B_{2,1}^{\frac{d}{2}}}+\|\nabla \tilde c\cdot \dive v\|_{\dot B_{2,1}^{\frac{d}{2}}}+\|v\|_{\dot B_{2,1}^{\frac{d}{2}}}\|\tilde c\|_{\dot B_{2,1}^{\frac{d}{2}+1}}^2+ \|v\|_{\dot B_{2,1}^{\frac{d}{2}+1}}\|c\|_{\dot B_{2,1}^{\frac{d}{2}+1}} +\|\tilde c\|_{\dot B_{2,1}^{\frac{d}{2}+1}}\|v\|_{\dot B_{2,1}^{\frac{d}{2}+1}}\|\tilde c\|_{\dot B_{2,1}^{\frac{d}{2}}} \bigg)d\tau \hfill\cr\hfill \lesssim \int_0^t \|Z\|_{\dot B_{2,1}^{\frac{d}{2}+1}}^2 d\tau \lesssim \int_0^t \tilde{\mathcal{L}(\tau)}\tilde{\mathcal{H}}(\tau)d\tau;}$$

\item[$\bullet$] $\displaystyle \int_0^t \varepsilon^2 \|v\|_{\dot B_{2,1}^{\frac{d}{2}}}\|\tilde c\|_{\dot B_{2,1}^{\frac{d}{2}}}d\tau+\varepsilon^2 \int_0^t \|\tilde c\|_{\dot B_{2,1}^{\frac{d}{2}}}\|v\|_{\dot B_{2,1}^{\frac{d}{2}}}\lesssim \int_0^t \tilde{\mathcal{L}}(\tau)\tilde{\mathcal{H}}(\tau) d\tau $ by using the lemma \ref{Inégalités fonctionnelle};

\item[$\bullet$] $\|v\cdot \nabla \nabla c\|_{\dot B_{2,1}^{\frac{d}{2}}}^l\lesssim \|v\cdot \nabla \nabla c^l\|_{\dot B_{2,1}^{\frac{d}{2}}}+\|v\cdot \nabla \nabla c^h\|_{\dot B_{2,1}^{\frac{d}{2}}}\lesssim \|v\|_{\dot B_{2,1}^{\frac{d}{2}}}\|\tilde c\|_{\dot B_{2,1}^{\frac{d}{2}+2}}^l+\|v\cdot \nabla \nabla c^h\|_{\dot B_{2,1}^{\frac{d}{2}-1}}^l$ \\ 
$\lesssim \tilde{\mathcal{L}}\tilde{\mathcal{H}}+\|v\|_{\dot B_{2,1}^{\frac{d}{2}}}\|\tilde c\|_{\dot B_{2,1}^{\frac{d}{2}+1}}^h\lesssim \tilde{\mathcal{L}}\tilde{\mathcal{H}}$ by the lemmas \ref{Inégalités fonctionnelle} and \ref{Produit espace de Besov};

\item[$\bullet$] By proceeding in the same way as previously, we discover that : $$\|\tilde c\nabla \dive v\|_{\dot B_{2,1}^{\frac{d}{2}}}^l\lesssim \|\tilde c\|_{\dot B_{2,1}^{\frac{d}{2}}}\left(\|v\|_{\dot B_{2,1}^{\frac{d}{2}+2}}^l+\|v\|_{\dot B_{2,1}^{\frac{d}{2}+1}}^h\right)\lesssim \tilde{\mathcal{L}}\tilde{\mathcal{H}};$$

\item[$\bullet$] $\displaystyle \|\tilde c\nabla(v\cdot \nabla \tilde c)\|_{\dot B_{2,1}^{\frac{d}{2}}}^l\leq \|\tilde c \dive v \nabla \tilde c\|_{\dot B_{2,1}^{\frac{d}{2}}}+\|\tilde cv\cdot \nabla \nabla \tilde c\|_{\dot B_{2,1}^{\frac{d}{2}}}.$

For the first term, the lemma \ref{Inégalités fonctionnelle} is used and for the second, we do the same as for the previous two points. We find that this term is less than $\tilde{\mathcal{L}}\tilde{\mathcal{H}}$.

    \end{itemize}

    So we have inequality \eqref{mode amorti}.
\end{proof}

\subsubsection{Study of the low frequencies of $\tilde{c}$}~\\
We have : $$\partial_t \tilde c+\dive v=-\tilde c \dive v-v\cdot \nabla \tilde c.$$
However , $\displaystyle \dive v=\dive W-\Delta c+\varepsilon^2 \tilde c +\varepsilon^2 F(\tilde c)-\dive(\tilde c \nabla \tilde c)-\dive(v\cdot \nabla v)$.

So by rewriting the equation of $\tilde c$, we get $$\partial_t \tilde c-\Delta \tilde c+\varepsilon^2 c=-\dive W-\varepsilon^2 F(\tilde c) +\dive(\tilde c \nabla \tilde c)+\dive(v\cdot \nabla v)-\tilde c \dive v-v\cdot \nabla \tilde c$$

After spectral localization by means of  $\dot\Delta_j$,
 taking the scalar product with $\dot\Delta_j\tilde{c}$,  multiplying by
 $\varepsilon 2^{j(\frac{d}{2}-1)}$ (respectively $2^{j\frac{d}{2}}$) and, finally,  summing up on $2^j\leq \varepsilon$ (respectively $\varepsilon\leq 2^j \leq 1)$, we get 

\begin{lemma}\label{lemme estimation c} ~\\ The following estimates are satisfied by $\tilde{c}$ :
$$\displaylines{
        \|\varepsilon\tilde c(t)\|_{\dot B_{2,1}^{\frac{d}{2}-1}}^{l^-,\varepsilon}+\varepsilon^2 \int_0^t \|\varepsilon\tilde c(t)\|_{\dot B_{2,1}^{\frac{d}{2}-1}}^{l^-,\varepsilon}d\tau  \lesssim  \|\varepsilon\tilde c_0\|_{\dot B_{2,1}^{\frac{d}{2}-1}}^{l^-,\varepsilon}+\varepsilon\int_0^t \|W\|_{\dot B_{2,1}^{\frac{d}{2}}}^{l^-,\varepsilon}d\tau+\int_0^t \mathcal{C}(\tau)\tilde{\mathcal{C}}(\tau)d\tau
        \hfill\cr\hfill +\varepsilon\int_0^t \|v\cdot \nabla v \|_{\dot B_{2,1}^{\frac{d}{2}}}^{l^-,\varepsilon}+\varepsilon \int_0^t \|\tilde c \dive v\|_{\dot B_{2,1}^{\frac{d}{2}-1}}d\tau+\varepsilon\int_0^t \|v\cdot \nabla \tilde{c}\|_{\dot B_{2,1}^{\frac{d}{2}-1}}d\tau, }$$
$$\displaylines{\|\tilde{c}(t)\|_{\dot B_{2,1}^{\frac{d}{2}}}^{l^+,\varepsilon}+\int_0^t \|\tilde c\|_{\dot B_{2,1}^{\frac{d}{2}+2}}^{l^+,\varepsilon}d\tau  \lesssim \|\tilde c_0\|_{\dot B_{2,1}^{\frac{d}{2}}}^{l^+,\varepsilon}+\int_0^t \|W\|_{\dot B_{2,1}^{\frac{d}{2}+1}}^{l^+,\varepsilon}d\tau+ \int_0^t \mathcal{C}(\tau)\tilde{\mathcal{C}}(\tau)d\tau + \int_0^t \|\dive(v\cdot \nabla v)\|_{\dot B_{2,1}^{\frac{d}{2}}}^{l^+,\varepsilon}d\tau \hfill\cr\hfill  +\int_0^t \|\tilde c \dive v\|_{\dot B_{2,1}^{\frac{d}{2}}}d\tau+\int_0^t \|v\cdot \nabla\tilde c\|_{\dot B_{2,1}^{\frac{d}{2}}}d\tau}$$
where, in view of the linear estimates, we set $\mathcal{C}(t)\mathrel{\mathop:}= \|\varepsilon\tilde c(t)\|_{\dot B_{2,1}^{\frac{d}{2}-1}}^{l^-,\varepsilon}+\|\tilde c(t)\|_{\dot B_{2,1}^{\frac{d}{2}}}^{l^+,\varepsilon}+\|\tilde c(t)\|_{\dot B_{2,1}^{\frac{d}{2}+1}}^{h}$ and $\tilde{\mathcal{C}}(t)\mathrel{\mathop:}= \varepsilon^2  \|\varepsilon\tilde c(t)\|_{\dot B_{2,1}^{\frac{d}{2}-1}}^{l^-,\varepsilon}+\|\tilde c(t)\|_{\dot B_{2,1}^{\frac{d}{2}+2}}^{l^+,\varepsilon}+\|\tilde c(t)\|_{\dot B_{2,1}^{\frac{d}{2}+1}}^{h}$.
\end{lemma}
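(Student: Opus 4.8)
The plan is to run a parabolic energy estimate directly on the equation for $\tilde c$ displayed just above the lemma, localized in frequency by $\dot\Delta_j$, and then to split the dyadic sum according to the two regimes $2^j\le\varepsilon$ and $\varepsilon\le 2^j\le 1$. Writing $R$ for the whole right-hand side
$$R\mathrel{\mathop:}=-\dive W-\varepsilon^2 F(\tilde c)+\dive(\tilde c\nabla\tilde c)+\dive(v\cdot\nabla v)-\tilde c\dive v-v\cdot\nabla\tilde c,$$
I would apply $\dot\Delta_j$ and take the $L^2$ scalar product with $\tilde c_j$. The key observation is that $c_j=\tilde c_j$, since $\dot\Delta_j$ annihilates the constant $\overline c$, so the zero-order term contributes exactly $\varepsilon^2\|\tilde c_j\|_{L^2}^2$; combined with Bernstein's inequality $\|\nabla\tilde c_j\|_{L^2}^2\simeq 2^{2j}\|\tilde c_j\|_{L^2}^2$ this yields
$$\frac12\frac{d}{dt}\|\tilde c_j\|_{L^2}^2+(2^{2j}+\varepsilon^2)\|\tilde c_j\|_{L^2}^2\lesssim\|\dot\Delta_j R\|_{L^2}\,\|\tilde c_j\|_{L^2}.$$

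Dividing out one factor $\|\tilde c_j\|_{L^2}$ by Lemma \ref{lemme edo} and integrating in time gives
$$\|\tilde c_j(t)\|_{L^2}+(2^{2j}+\varepsilon^2)\int_0^t\|\tilde c_j\|_{L^2}\,d\tau\lesssim\|\tilde c_{0,j}\|_{L^2}+\int_0^t\|\dot\Delta_j R\|_{L^2}\,d\tau.$$
I would then specialise to each regime. For the very low frequencies $2^j\le\varepsilon$ one has $2^{2j}+\varepsilon^2\simeq\varepsilon^2$; multiplying by $\varepsilon\,2^{j(\frac{d}{2}-1)}$ and summing produces the left-hand side of the first estimate together with the dissipation $\varepsilon^2\int_0^t\|\varepsilon\tilde c\|_{\dot B_{2,1}^{\frac{d}{2}-1}}^{l^-,\varepsilon}$. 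For the medium frequencies $\varepsilon\le 2^j\le 1$ one has $2^{2j}+\varepsilon^2\simeq 2^{2j}$; multiplying by $2^{j\frac{d}{2}}$ and summing turns the dissipation into the two-derivative gain $\int_0^t\|\tilde c\|_{\dot B_{2,1}^{\frac{d}{2}+2}}^{l^+,\varepsilon}$. The linear source $-\dive W$ then contributes $\varepsilon\int_0^t\|W\|_{\dot B_{2,1}^{\frac{d}{2}}}^{l^-,\varepsilon}$ and $\int_0^t\|W\|_{\dot B_{2,1}^{\frac{d}{2}+1}}^{l^+,\varepsilon}$ respectively, the derivative carried by $\dive$ being absorbed into the frequency weight.

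It remains to distribute the nonlinear part of $R$, which is the step requiring the most care. The terms that depend only on $\tilde c$, namely $\varepsilon^2F(\tilde c)$ and $\dive(\tilde c\nabla\tilde c)$, I would estimate through the product laws of Lemma \ref{Produit espace de Besov} and bound by $\mathcal C\,\tilde{\mathcal C}$, in exactly the same bilinear fashion used in Lemma \ref{Inégalités fonctionnelle} to control $\tilde{\mathcal L}\tilde{\mathcal H}$ — here $\mathcal C$ and $\tilde{\mathcal C}$ play for $\tilde c$ the roles that $\tilde{\mathcal L}$ and $\tilde{\mathcal H}$ played for $Z$, one merely has to track the low, medium and high splitting together with the $\varepsilon$-weights. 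By contrast the remaining terms $\dive(v\cdot\nabla v)$, $\tilde c\dive v$ and $v\cdot\nabla\tilde c$ involve the velocity $v$, which does not appear in $\mathcal C$; these cannot be closed inside $\mathcal C\,\tilde{\mathcal C}$ and I would keep them explicit, with the regularity index ($\frac{d}{2}-1$ in the very low regime, carrying the extra $\varepsilon$, and $\frac{d}{2}$ in the medium regime) dictated by the weights above. This allocation is the main obstacle: one must verify term by term that every quadratic expression in $\tilde c$ genuinely closes in the product $\mathcal C\,\tilde{\mathcal C}$ after the frequency restriction, while retaining the velocity-dependent terms in precisely the norms stated, since these are what will later be combined with the estimates for $v$ and the damped mode $W$ to close the full nonlinear scheme.
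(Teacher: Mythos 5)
Your proposal follows exactly the paper's own route: the same parabolic rewriting $\partial_t\tilde c-\Delta\tilde c+\varepsilon^2\tilde c=R$ (obtained by substituting $\dive v$ via the damped mode $W$), the same spectral localization and $L^2$ energy estimate combined with Lemma \ref{lemme edo}, the same weights $\varepsilon\,2^{j(\frac{d}{2}-1)}$ and $2^{j\frac{d}{2}}$ summed over $2^j\le\varepsilon$ and $\varepsilon\le 2^j\le 1$ respectively, and the same allocation of the source terms (velocity-dependent terms kept explicit, $\tilde c$-only terms absorbed into $\mathcal{C}\tilde{\mathcal{C}}$). In fact your write-up supplies more detail than the paper, whose proof of this lemma is compressed into a single sentence; your treatment of the Bernstein equivalence and of the constant $\overline c$ under $\dot\Delta_j$ correctly fills in what the paper leaves implicit.
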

\subsubsection{Study of  the low frequencies of $v$}~~\\
It is now possible to deduce optimal bounds for $v$ from the ones we have just derived for $W$ and $\tilde{c}$. We need to decompose $v$ by using the damped mode as follows :
\begin{eqnarray}\label{v en fonction de w}
v=W-\nabla c+\varepsilon^2 \nabla(-\Delta)^{-1}\tilde c+v\cdot \nabla v+\tilde c \nabla c+\varepsilon^2 \nabla (-\Delta)^{-1}F(\tilde c)\end{eqnarray}

\begin{lemma}\label{lemme estimation v}
    Based on the following estimates, we will set:
    
    ~\\
    $\mathcal{V}(t)\mathrel{\mathop:}=\|v(t)\|_{\dot B_{2,1}^{\frac{d}{2}}}^l+\|v(t)\|_{\dot B_{2,1}^{\frac{d}{2}+1}}^h\;$ and $\;\tilde{\mathcal{V}}(t)\mathrel{\mathop:}=\varepsilon\|v(t)\|_{\dot B_{2,1}^{\frac{d}{2}}}^{l^-,\varepsilon}+\|v(t)\|_{\dot B_{2,1}^{\frac{d}{2}+1}}^{l^+,\varepsilon}+\|v(t)\|_{\dot B_{2,1}^{\frac{d}{2}+1}}^h$.

~\\
    We then obtain the following estimates: 
~

$\begin{aligned}
        \|v(t)\|_{\dot B_{2,1}^{\frac{d}{2}}}^{l^-,\varepsilon}+\varepsilon\int_0^t \|v\|_{\dot B_{2,1}^{\frac{d}{2}}}^{l^-,\varepsilon}d\tau & \lesssim  \|W(t)\|_{\dot B_{2,1}^{\frac{d}{2}}}^{l^-,\varepsilon}+(\varepsilon^2+\|Z(t)\|_{\dot B_{2,1}^{\frac{d}{2}+1}}+\varepsilon \mathcal{C}(t) ) \mathcal{C}(t)+ \mathcal{V}(t)\|Z(t)\|_{\dot B_{2,1}^{\frac{d}{2}+1}}\\ & +\int_0^t \varepsilon\|W\|_{\dot B_{2,1}^{\frac{d}{2}}}^{l^-,\varepsilon}d\tau +\int_0^t \tilde{\mathcal{C}}(\tau)d\tau+\int_0^t \mathcal{C}(\tau)\tilde{\mathcal{C}}(\tau)d\tau+\varepsilon\int_0^t \mathcal{V}(\tau)\tilde{\mathcal{V}}(\tau)d\tau.
    \end{aligned}$

$\begin{aligned}
    \|v(t)\|_{\dot B_{2,1}^{\frac{d}{2}}}^{l^+,\varepsilon}+\int_0^t \|v(t)\|_{\dot B_{2,1}^{\frac{d}{2}+1}}^{l^+,\varepsilon}d\tau  \lesssim & \|W(t)\|_{\dot B_{2,1}^{\frac{d}{2}}}^{l^+,\varepsilon}+\|\tilde c(t)\|_{\dot B_{2,1}^{\frac{d}{2}+1}}^{l^+,\varepsilon}+\left(\mathcal{V}(t)+\mathcal{C}(t)\right)\|Z(t)\|_{\dot B_{2,1}^{\frac{d}{2}+1}} +\varepsilon \left(\mathcal{C}(t)\right)^2 \\ & \quad +\int_0^t \|W\|_{\dot B_{2,1}^{\frac{d}{2}+1}}^{l^+,\varepsilon}d\tau +\int_0^t \tilde{\mathcal{C}}(\tau)d\tau+\int_0^t \mathcal{V}(\tau)\tilde{\mathcal{V}}(\tau)d\tau+\int_0^t \mathcal{C}(\tau)\tilde{\mathcal{C}}(\tau)d\tau.
\end{aligned}$
\end{lemma}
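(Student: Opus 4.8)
The plan is to start from the algebraic identity~\eqref{v en fonction de w}, which expresses $v$ as the damped mode $W$, plus the linear density contributions $-\nabla c+\varepsilon^2\nabla(-\Delta)^{-1}\tilde c$, plus the quadratic remainders $v\cdot\nabla v+\tilde c\nabla c+\varepsilon^2\nabla(-\Delta)^{-1}F(\tilde c)$, and to measure both sides in the two low-frequency norms $\|\cdot\|^{l^-,\varepsilon}$ and $\|\cdot\|^{l^+,\varepsilon}$. Since the relative size of $2^{2j}$ and $\varepsilon^2$ changes across the threshold $2^j\simeq\varepsilon$, the very low frequencies ($2^j\leq\varepsilon$) and the medium frequencies ($\varepsilon\leq 2^j\leq 1$) have to be treated separately; in each band Bernstein's inequality lets us trade regularity for powers of $\varepsilon$ (respectively of $2^j$), and this is exactly what produces the asymmetric dissipation rates $\varepsilon\int_0^t\|v\|_{\dot B_{2,1}^{\frac{d}{2}}}^{l^-,\varepsilon}$ and $\int_0^t\|v\|_{\dot B_{2,1}^{\frac{d}{2}+1}}^{l^+,\varepsilon}$ on the left-hand sides.

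First I would bound the time-pointwise parts. Taking the relevant band of $\|\cdot\|_{\dot B_{2,1}^{\frac{d}{2}}}$ in~\eqref{v en fonction de w}, the term $W$ gives $\|W(t)\|^{l^\pm,\varepsilon}$ at once; the linear density terms $\nabla c$ and $\varepsilon^2\nabla(-\Delta)^{-1}\tilde c$ are controlled by $\mathcal C(t)$ after Bernstein (in the very low band $\|\tilde c\|_{\dot B_{2,1}^{\frac{d}{2}+1}}^{l^-,\varepsilon}\lesssim\varepsilon^2\|\tilde c\|_{\dot B_{2,1}^{\frac{d}{2}-1}}^{l^-,\varepsilon}$, which accounts for the prefactors $\varepsilon^2$ and $\varepsilon\mathcal C(t)$, while in the medium band one keeps $\|\tilde c\|_{\dot B_{2,1}^{\frac{d}{2}+1}}^{l^+,\varepsilon}$); and the quadratic remainders are estimated through the product laws of Lemma~\ref{Produit espace de Besov} combined with Lemma~\ref{Inégalités fonctionnelle}, which yields the contributions $\mathcal V(t)\|Z(t)\|_{\dot B_{2,1}^{\frac{d}{2}+1}}$, $\big(\mathcal V(t)+\mathcal C(t)\big)\|Z(t)\|_{\dot B_{2,1}^{\frac{d}{2}+1}}$ and $\varepsilon\big(\mathcal C(t)\big)^2$.

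Next I would treat the integrated (dissipative) parts, which is the crux. Multiplying~\eqref{v en fonction de w} by the correct weight ($\varepsilon$ in the very low band, none in the medium band) and integrating in time, the damped mode contributes $\int_0^t\varepsilon\|W\|_{\dot B_{2,1}^{\frac{d}{2}}}^{l^-,\varepsilon}\,d\tau$ and $\int_0^t\|W\|_{\dot B_{2,1}^{\frac{d}{2}+1}}^{l^+,\varepsilon}\,d\tau$, both of which are controlled by Lemmas~\ref{lemme mode amorti} and~\ref{mode amorti hf}. The decisive point is that the time integral of the linear density-gradient terms lands exactly in $\int_0^t\tilde{\mathcal C}(\tau)\,d\tau$: in the very low band Bernstein turns $\varepsilon\|\nabla c\|_{\dot B_{2,1}^{\frac{d}{2}}}^{l^-,\varepsilon}$ into $\varepsilon^2\|\varepsilon\tilde c\|_{\dot B_{2,1}^{\frac{d}{2}-1}}^{l^-,\varepsilon}$, i.e. the density dissipation already secured in Lemma~\ref{lemme estimation c}, while in the medium band it becomes $\|\tilde c\|_{\dot B_{2,1}^{\frac{d}{2}+1}}^{l^+,\varepsilon}$. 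The remaining integrated quadratic terms are then folded into $\int_0^t\mathcal C\tilde{\mathcal C}\,d\tau$, $\int_0^t\mathcal V\tilde{\mathcal V}\,d\tau$ (and $\varepsilon\int_0^t\mathcal V\tilde{\mathcal V}\,d\tau$ in the very low band) via the product laws, the functionals $\mathcal V,\tilde{\mathcal V}$ being designed precisely so that the $v\cdot\nabla v$ contribution closes.

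The hard part will be the bookkeeping rather than any single estimate: one must assign to every nonlinear term the correct splitting into an energy-type factor ($\tilde{\mathcal L}$ or $\mathcal V$) and a dissipation-type factor ($\tilde{\mathcal H}$, $\tilde{\mathcal C}$ or $\tilde{\mathcal V}$) so that after integration it is absorbed by $\int_0^t\mathcal C\tilde{\mathcal C}$ or $\int_0^t\mathcal V\tilde{\mathcal V}$, and to keep the powers of $\varepsilon$ coherent across the two bands. The key consistency check is that no term carries an excess negative power of $\varepsilon$ that would blow up as $\varepsilon\to0$; this holds precisely because each derivative loss at very low frequencies is compensated by the Bernstein gain $2^{2j}\lesssim\varepsilon^2$, so the linear density contributions are genuinely of the size $\tilde{\mathcal C}$ and no larger.
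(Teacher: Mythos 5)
Your proposal is correct and follows essentially the same route as the paper: both start from the identity \eqref{v en fonction de w}, estimate band by band (using Bernstein's inequality to trade the derivative loss at very low frequencies for powers of $\varepsilon$, and the product laws of Lemma \ref{Produit espace de Besov} together with the functional inequalities for the quadratic remainders), and then integrate in time with the weight $\varepsilon$ in the very low band to land on $\int_0^t\tilde{\mathcal C}$, $\int_0^t\mathcal C\tilde{\mathcal C}$ and $\int_0^t\mathcal V\tilde{\mathcal V}$. The only slight difference is that the paper leaves the damped-mode integrals $\int_0^t\varepsilon\|W\|_{\dot B_{2,1}^{\frac{d}{2}}}^{l^-,\varepsilon}d\tau$ and $\int_0^t\|W\|_{\dot B_{2,1}^{\frac{d}{2}+1}}^{l^+,\varepsilon}d\tau$ untouched on the right-hand side (their absorption via Lemma \ref{lemme mode amorti} is deferred to the final a priori estimate), whereas you invoke that control already here; this is immaterial.
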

\begin{proof}

    By lemmas \ref{Produit espace de Besov}, \ref{fonction régulière besov} and \eqref{v en fonction de w}, we obtain : 
    
    $$ \|v(t)\|_{\dot B_{2,1}^{\frac{d}{2}}}^{l^-,\varepsilon}\lesssim \|W\|_{\dot B_{2,1}^{\frac{d}{2}}}^{l^-,\varepsilon}+\varepsilon^2 \|\tilde c\|_{\dot B_{2,1}^{\frac{d}{2}-1}}^{l^-,\varepsilon}+\|v\|_{\dot B_{2,1}^{\frac{d}{2}}}\|v\|_{\dot B_{2,1}^{\frac{d}{2}+1}}+\|\tilde c\|_{\dot B_{2,1}^{\frac{d}{2}}}\|\tilde c\|_{\dot B_{2,1}^{\frac{d}{2}+1}}+\varepsilon^2 \|\tilde c\|_{\dot B_{2,1}^{\frac{d}{2}}}\|\tilde c\|_{\dot B_{2,1}^{\frac{d}{2}-1}}.$$
    
   We deduce : $$\left\{\begin{array}{l}
   \|v(t)\|_{\dot B_{2,1}^{\frac{d}{2}}}^{l^-,\varepsilon} \lesssim \|W(t)\|_{\dot B_{2,1}^{\frac{d}{2}}}^{l^-,\varepsilon}+(\varepsilon+\|Z(t)\|_{\dot B_{2,1}^{\frac{d}{2}+1}}+\varepsilon \mathcal{C}(t) )\mathcal{C}(t)+\mathcal{V}(t)\|v(t)\|_{\dot B_{2,1}^{\frac{d}{2}+1}} \\
    \displaystyle\varepsilon\int_0^t \|v\|_{\dot B_{2,1}^{\frac{d}{2}}}^{l^-,\varepsilon}d\tau\lesssim \int_0^t \varepsilon\|W\|_{\dot B_{2,1}^{\frac{d}{2}}}^{l^-,\varepsilon}d\tau +\int_0^t \tilde{\mathcal{C}}(\tau)d\tau+\int_0^t \mathcal{C}(\tau)\tilde{\mathcal{C}}(\tau)d\tau+\varepsilon\int_0^t \mathcal{V}(\tau)\tilde{\mathcal{V}}(\tau)d\tau.
    \end{array}\right.$$

    Similarly, we get :  $$\|v(t)\|_{\dot B_{2,1}^{\frac{d}{2}}}^{l^+,\varepsilon}\lesssim \|W(t)\|_{\dot B_{2,1}^{\frac{d}{2}}}^{l^+,\varepsilon}+\|\tilde c(t)\|_{\dot B_{2,1}^{\frac{d}{2}+1}}^{l^+,\varepsilon}+\left(\mathcal{V}(t)+\mathcal{C}(t)\right)\|Z(t)\|_{\dot B_{2,1}^{\frac{d}{2}+1}}+\varepsilon \left(\mathcal{C}(t)\right)^2.$$

    By lemmas \ref{Produit espace de Besov} and \ref{fonction régulière besov} and inequality $\displaystyle \|Z\|_{\dot B_{2,1}^{\frac{d}{2}+1}}^{l^+,\varepsilon}\leq \|Z\|_{\dot B_{2,1}^{\frac{d}{2}}}^{l^+,\varepsilon} $, we have also : 
    $$\|v\|_{\dot B_{2,1}^{\frac{d}{2}+1}}^{l^+,\varepsilon}\lesssim \|W\|_{\dot B_{2,1}^{\frac{d}{2}+1}}^{l^+,\varepsilon}+\|\tilde c\|_{\dot B_{2,1}^{\frac{d}{2}+2}}^{l^+,\varepsilon}+\|v\|_{\dot B_{2,1}^{\frac{d}{2}}}\|v\|_{\dot B_{2,1}^{\frac{d}{2}+1}}+\|\tilde c \cdot \nabla c\|_{\dot B_{2,1}^{\frac{d}{2}+1}}^{l^+,\varepsilon}+\varepsilon^2 \|\tilde c\|_{\dot B_{2,1}^{\frac{d}{2}}}^2. $$

    Like the proof of lemma \ref{Inégalités fonctionnelle}, we have that $\displaystyle \varepsilon^2 \|\tilde c\|_{\dot B_{2,1}^{\frac{d}{2}}}^2\lesssim \mathcal{C}\tilde{\mathcal{C}} $.

   Moreover, $$\begin{aligned} \|\tilde c \cdot \nabla c\|_{\dot B_{2,1}^{\frac{d}{2}+1}}^{l^+,\varepsilon}&\lesssim \|\tilde c\cdot \nabla c^l\|_{\dot B_{2,1}^{\frac{d}{2}+1}}^l+\|\tilde c\cdot \nabla c^h\|_{\dot B_{2,1}^{\frac{d}{2}+1}}^l\\
   &\lesssim \|\tilde c\|_{\dot B_{2,1}^{\frac{d}{2}+1}}\|\tilde c\|_{\dot B_{2,1}^{\frac{d}{2}+1}}^l+\|\tilde c\|_{\dot B_{2,1}^{\frac{d}{2}}}\|\tilde c\|_{\dot B_{2,1}^{\frac{d}{2}+2}}^l+\|\tilde c \cdot \nabla c^h\|_{\dot B_{2,1}^{\frac{d}{2}}}^l\\& \lesssim \mathcal{C}\tilde{\mathcal{C}}+\|\tilde c\|_{\dot B_{2,1}^{\frac{d}{2}}}\|\tilde c\|_{\dot B_{2,1}^{\frac{d}{2}+1}}\lesssim \mathcal{C}\tilde{\mathcal{C}}.\end{aligned}$$

We deduce : $$\int_0^t \|v\|_{\dot B_{2,1}^{\frac{d}{2}}}^{l^+,\varepsilon}d\tau \lesssim \int_0^t \|W\|_{\dot B_{2,1}^{\frac{d}{2}+1}}^{l^+,\varepsilon}d\tau+\int_0^t \tilde{\mathcal{C}}(\tau)d\tau+\int_0^t \mathcal{V}(\tau)\tilde{\mathcal{V}}(\tau)d\tau+\int_0^t \mathcal{C}(\tau)\tilde{\mathcal{C}}(\tau)d\tau.$$
\end{proof}
\subsubsection{Final a priori estimates}~\\
Let us denote \begin{equation}\label{LH}
    \mathcal{L}(t)\mathrel{\mathop:}=\mathcal{C}(t)+\mathcal{V}(t)+\|W(t)\|_{\dot B_{2,1}^{\frac{d}{2}}}^l
    \ \hbox{ and }\ \mathcal{H}(t)\mathrel{\mathop:}=\tilde{\mathcal{C}}(t)+\tilde{\mathcal{V}}(t)+\|W(t)\|_{\dot B_{2,1}^{\frac{d}{2}}}^l.\end{equation}

We notice that : $\tilde{\mathcal{L}}(t)\leq \mathcal{L}(t)$ and $\tilde{\mathcal{H}}(t)\leq \mathcal{H}(t)$.

\begin{prop}\label{estimées finales} We have the following estimate :
$$\displaystyle \mathcal{L}(t)+\int_0^t \mathcal{H}(\tau)d\tau \lesssim \mathcal{L}(0)+\int_0^t \mathcal{L}(\tau)\mathcal{H}(\tau)d\tau.$$

If we take $\mathcal{L}(0)$ sufficiently small, thus we obtain the final a priori estimate : $$\mathcal{L}(t)+\int_0^t \mathcal{H}(\tau)d\tau \lesssim \mathcal{L}(0).$$

\end{prop}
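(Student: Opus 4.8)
The plan is to assemble the four families of a priori estimates already at our disposal and read off the result. Concretely, I would sum the high-frequency bound for $Z=(\tilde c,v)$, the low-frequency estimate for the damped mode $W$ (Lemma~\ref{lemme mode amorti}), the two estimates for $\tilde c$ (Lemma~\ref{lemme estimation c}) and the two estimates for $v$ (Lemma~\ref{lemme estimation v}). By construction of the norms in \eqref{LH}, the instantaneous left-hand sides reconstitute $\mathcal L(t)$ piece by piece: the high-frequency parts of $\mathcal C$ and $\mathcal V$ come from the high-frequency proposition, the very-low and medium-frequency parts from the $\tilde c$ and $v$ lemmas, and $\|W(t)\|^l_{\dot B_{2,1}^{d/2}}$ from Lemma~\ref{lemme mode amorti}; while the time-integrated left-hand sides reconstitute $\int_0^t\mathcal H\,d\tau=\int_0^t(\tilde{\mathcal C}+\tilde{\mathcal V}+\|W\|^l_{\dot B_{2,1}^{d/2}})\,d\tau$.

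On the right-hand side, the initial-data terms combine into $\mathcal L(0)$ after the elementary embeddings between the restricted Besov norms (e.g.\ $\|W_0\|^l_{\dot B_{2,1}^{d/2}}\lesssim\varepsilon^2\|\tilde c_0\|^{l^-,\varepsilon}_{\dot B_{2,1}^{d/2-1}}+\|v_0\|^l_{\dot B_{2,1}^{d/2}}$, as in the linear step). Every quadratic contribution is controlled by $\int_0^t\mathcal L\mathcal H\,d\tau$: the remainder of the high-frequency proposition is already $C\int_0^t\tilde{\mathcal L}\tilde{\mathcal H}\le C\int_0^t\mathcal L\mathcal H$ thanks to $\tilde{\mathcal L}\le\mathcal L$ and $\tilde{\mathcal H}\le\mathcal H$, and each nonlinear term in Lemmas~\ref{lemme mode amorti}--\ref{lemme estimation v} (those of the form $\int\mathcal C\tilde{\mathcal C}$, $\int\mathcal V\tilde{\mathcal V}$, $\varepsilon^2\int\|\tilde c\|^2_{\dot B_{2,1}^{d/2}}$, and so on) is turned into a multiple of $\int_0^t\mathcal L\mathcal H$ by Lemma~\ref{Inégalités fonctionnelle} and the product laws of Lemma~\ref{Produit espace de Besov}.

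The crux is to dispose of the \emph{linear} coupling terms that link $W$, $\tilde c$ and $v$. The instantaneous $\|W(t)\|^l$ on the right of Lemma~\ref{lemme estimation v} is not reabsorbed but simply \emph{substituted} via the $W$-estimate, which expresses it through the data and time integrals. The genuine couplings --- namely $\int_0^t\|v\|^l_{\dot B_{2,1}^{d/2+2}}$ and $\varepsilon^2\int_0^t\|\dive v\|^l_{\dot B_{2,1}^{d/2-1}}$ in Lemma~\ref{lemme mode amorti}, and $\varepsilon\int_0^t\|W\|^{l^-,\varepsilon}_{\dot B_{2,1}^{d/2}}$, $\int_0^t\|W\|^{l^+,\varepsilon}_{\dot B_{2,1}^{d/2+1}}$ in Lemmas~\ref{lemme estimation c}--\ref{lemme estimation v} --- are absorbed into the dissipative left-hand side exactly as the ``negligible'' terms of the linear analysis: on the medium range one gains a factor $2^j\le\tfrac12$ when raising the regularity index (so $\|v\|^{l^+,\varepsilon}_{\dot B_{2,1}^{d/2+2}}\le\tfrac12\|v\|^{l^+,\varepsilon}_{\dot B_{2,1}^{d/2+1}}$ and $\|W\|^{l^+,\varepsilon}_{\dot B_{2,1}^{d/2+1}}\le\tfrac12\|W\|^{l}_{\dot B_{2,1}^{d/2}}$), while on the very-low range the $\varepsilon$-weights supply the smallness (so $\|v\|^{l^-,\varepsilon}_{\dot B_{2,1}^{d/2+2}}\le\varepsilon^2\|v\|^{l^-,\varepsilon}_{\dot B_{2,1}^{d/2}}\le\varepsilon\,\tilde{\mathcal V}$ and $\varepsilon\|W\|^{l^-,\varepsilon}_{\dot B_{2,1}^{d/2}}\le\varepsilon\|W\|^l_{\dot B_{2,1}^{d/2}}$). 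Closing this loop between $W$ and $v$ is the main obstacle, and it is precisely where $\varepsilon\le\tfrac12$ and the choice of regularity indices are used; after absorption one obtains the first inequality $\mathcal L(t)+\int_0^t\mathcal H\,d\tau\lesssim\mathcal L(0)+\int_0^t\mathcal L\mathcal H\,d\tau$.

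The second inequality then follows from a standard bootstrap. Assuming $\mathcal L(\tau)\le\delta$ on $[0,t]$ for a small $\delta$ to be fixed, the quadratic term is estimated by $\int_0^t\mathcal L\mathcal H\,d\tau\le\delta\int_0^t\mathcal H\,d\tau$; choosing $\delta$ so small that $C\delta\le\tfrac12$ lets us absorb it into the left-hand side and gives $\mathcal L(t)+\tfrac12\int_0^t\mathcal H\,d\tau\le C\mathcal L(0)$. Taking $\mathcal L(0)$ small enough that $C\mathcal L(0)<\delta$ strictly improves the a priori bound, so a continuity/continuation argument propagates $\mathcal L(t)\le\delta$ for all times and yields $\mathcal L(t)+\int_0^t\mathcal H\,d\tau\lesssim\mathcal L(0)$; in particular the smallness \eqref{hypothèse de petitesse} is preserved along the flow.
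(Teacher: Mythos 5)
Your overall scheme — summing the high-frequency proposition with Lemmas~\ref{lemme mode amorti}, \ref{lemme estimation c}, \ref{lemme estimation v}, absorbing the linear coupling terms thanks to the gain of $2^j\leq 1/2$ on medium frequencies and of the $\varepsilon$-weights on very low frequencies, and closing with a bootstrap (which is precisely Lemma~\ref{lemme edo2}) — is the same as the paper's. But there is a genuine gap in the treatment of the nonlinear terms: your blanket claim that every quadratic term of Lemmas~\ref{lemme mode amorti}--\ref{lemme estimation v} is ``turned into a multiple of $\int_0^t\mathcal{L}\mathcal{H}\,d\tau$ by Lemma~\ref{Inégalités fonctionnelle} and the product laws'' fails for the term $\int_0^t\|v\cdot\nabla\tilde c\|_{\dot B_{2,1}^{d/2}}\,d\tau$ coming from the medium-frequency estimate of Lemma~\ref{lemme estimation c}. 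The product law gives $\|v\cdot\nabla\tilde c\|_{\dot B_{2,1}^{d/2}}\lesssim\|v\|_{\dot B_{2,1}^{d/2}}\|\tilde c\|_{\dot B_{2,1}^{d/2+1}}$, and since this quantity must be integrated in time, at least one factor has to be controlled by $\mathcal{H}$. But $\mathcal{H}$ defined in \eqref{LH} only contains $\varepsilon\|v\|_{\dot B_{2,1}^{d/2}}^{l^-,\varepsilon}$ (so $\|v\|_{\dot B_{2,1}^{d/2}}^{l^-,\varepsilon}\leq\varepsilon^{-1}\mathcal{H}$) and only $\|\tilde c\|_{\dot B_{2,1}^{d/2+2}}^{l^+,\varepsilon}$ (so $\|\tilde c\|_{\dot B_{2,1}^{d/2+1}}^{l^+,\varepsilon}\leq\varepsilon^{-1}\mathcal{H}$). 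Hence for the interaction of the very low frequencies of $v$ with the medium frequencies of $\nabla\tilde c$, every pairing — and any interpolation between the two pairings — costs a factor $\varepsilon^{-1}$, and the bound $\lesssim\mathcal{L}\mathcal{H}$ uniform in $\varepsilon$ is out of reach by product laws alone. (Contrast with $\tilde c\,\dive v$, which is harmless because there $v$ carries a derivative and $\|v\|_{\dot B_{2,1}^{d/2+1}}\lesssim\tilde{\mathcal{V}}\leq\mathcal{H}$; the obstruction is specific to $v$ appearing undifferentiated against $\nabla \tilde c$.)

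The paper's proof contains exactly one extra idea to deal with this, which your proposal does not: before applying the product laws, it substitutes $v$ \emph{inside the nonlinearity} by the damped-mode identity \eqref{v en fonction de w}, $v=W-\nabla c+\varepsilon^2\nabla(-\Delta)^{-1}\tilde c+v\cdot\nabla v+\tilde c\nabla c+\varepsilon^2\nabla(-\Delta)^{-1}F(\tilde c)$. This replaces the dangerous factor $\|v\|_{\dot B_{2,1}^{d/2}}$ by $\|W\|_{\dot B_{2,1}^{d/2}}$ — and the crucial point is that $\|W\|_{\dot B_{2,1}^{d/2}}^{l}$ enters $\mathcal{H}$ with \emph{no} $\varepsilon$-weight — plus terms in $\tilde c$ carrying favorable weights and cubic remainders, whence $\|v\cdot\nabla c\|_{\dot B_{2,1}^{d/2}}\lesssim\tilde{\mathcal{L}}\tilde{\mathcal{H}}+\mathcal{L}\mathcal{H}\lesssim\mathcal{L}\mathcal{H}$. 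Note that your use of the $W$-equation is different and does not fill this hole: you only substitute the instantaneous $\|W(t)\|^{l}$ appearing on the right of Lemma~\ref{lemme estimation v}, not the factor $v$ in the product $v\cdot\nabla\tilde c$. Without this substitution step the first inequality of the proposition is not established, and the rest of your argument (which is otherwise correct) cannot start.
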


\begin{proof}
First, we note that by summing up the previous inequalities (lemmas \ref{lemme mode amorti}, \ref{lemme estimation c}, \ref{lemme estimation v}), terms in the right-hand side can be absorbed by those of the left-hand side.
Indeed :
\begin{itemize}
    \item[$\bullet$] In \eqref{mode amorti}, we have that the term $\displaystyle\int_0^t \|v\|_{\dot B_{2,1}^{\frac{d}{2}+2}}^l d\tau+\varepsilon^2 \int_0^t \|\dive v\|_{\dot B_{2,1}^{\frac{d}{2}-1}}d\tau$ is negligible compared to $\displaystyle\int_0^t \tilde{\mathcal{V}}(\tau)d\tau$ (so also to $\displaystyle \int_0^t \mathcal{H}(\tau)d\tau$).
    \item[$\bullet$] In the estimates of the lemma \ref{lemme estimation c}, we have that $\displaystyle \int_0^t \varepsilon\|W\|_{\dot B_{2,1}^{\frac{d}{2}}}^{l^-,\varepsilon}d\tau$, $\displaystyle \int_0^t \|W\|_{\dot B_{2,1}^{\frac{d}{2}+1}}^{l^+,\varepsilon}d\tau$ are negligible compared to $\displaystyle\int_0^t \|W\|_{\dot B_{2,1}^{\frac{d}{2}}}^l d\tau$. By using that$\|Z\|_{\dot B_{2,1}^{\frac{d}{2}}}$ is small and the lemma \ref{Produit espace de Besov}, we also have that terms $\displaystyle \varepsilon \int_0^t \|v\cdot \nabla v\|_{\dot B_{2,1}^{\frac{d}{2}}}^{l^-,\varepsilon}d\tau$ and $\displaystyle \int_0^t \|\dive(c\cdot \nabla v)\|_{\dot B_{2,1}^{\frac{d}{2}}}^{l^+,\varepsilon}d\tau$ are negligible compared to $\displaystyle \int_0^t \tilde{\mathcal{V}}(\tau)d\tau$.
    \item[$\bullet$] $\displaystyle (\varepsilon^2+\varepsilon C(t))C(t)$ and $\displaystyle \|\tilde c(t)\|_{\dot B_{2,1}^{\frac{d}{2}+1}}^{l^+,\varepsilon}$ are negligible compared to $\mathcal{L}(t)$.
\end{itemize}
Using the definitions of the various introduced norms and $\mathcal{L}$, $\mathcal{H}$ and the lemma \ref{Produit espace de Besov}, we have :
\begin{itemize}
    \item[$\bullet$] $\displaystyle \int_0^t \tilde{\mathcal{L}}\tilde{\mathcal{H}}d\tau \lesssim \int_0^t \mathcal{L}\mathcal{H}d\tau$,
    \item[$\bullet$] $\displaystyle \int_0^t \mathcal{C}(\tau)\tilde{\mathcal{C}}(\tau)d\tau+\int_0^t \mathcal{V}(\tau)\tilde{\mathcal{V}}(\tau)d\tau\lesssim \int_0^t \mathcal{L}(\tau)\mathcal{H}(\tau)d\tau,$
    \item[$\bullet$] $\displaystyle \int_0^t \varepsilon \|\tilde c \dive v\|_{\dot B_{2,1}^{\frac{d}{2}-1}}^{l^-,\varepsilon}d\tau\lesssim \int_0^t \|\varepsilon\tilde c\|_{\dot B_{2,1}^{\frac{d}{2}-1}}\|v\|_{\dot B_{2,1}^{\frac{d}{2}+1}}d\tau \lesssim \int_0^t \mathcal{L}(\tau)\mathcal{H}(\tau)d\tau$,
    \item[$\bullet$] $\displaystyle \int_0^t \|\tilde c \dive v\|_{\dot B_{2,1}^{\frac{d}{2}}}^{l^-,\varepsilon}d\tau\lesssim \int_0^t \|\tilde c\|_{\dot B_{2,1}^{\frac{d}{2}}}\|v\|_{\dot B_{2,1}^{\frac{d}{2}+1}}d\tau \lesssim \int_0^t \mathcal{L}(\tau)\mathcal{H}(\tau)d\tau,$
    \item[$\bullet$] $\displaystyle \varepsilon\|v\cdot \nabla c\|_{\dot B_{2,1}^{\frac{d}{2}-1}}^{l^-,\varepsilon}d\tau \lesssim \int_0^t \|\tilde c\|_{\dot B_{2,1}^{\frac{d}{2}}}\|\varepsilon v\|_{\dot B_{2,1}^{\frac{d}{2}}}d\tau \lesssim \int_0^t \mathcal{L}(\tau)\mathcal{H}(\tau)d\tau.$
\end{itemize}

Now, if we sum up the previous inequalities by using what we just did before and by removing "negligible terms compared to the right term", we get: 
$$\mathcal{L}(t)+\int_0^t \mathcal{H}(\tau)d\tau\lesssim \mathcal{L}(0)+\int_0^t \mathcal{L}(\tau)\mathcal{H}(\tau)d\tau+\int_0^t \|v\cdot \nabla c\|_{\dot B_{2,1}^{\frac{d}{2}}}^{l^+,\varepsilon}d\tau.$$

To handle the last term, let us use the fact that $v$ and $w$ are interrelated as follows:
$$v=W-\nabla c+\varepsilon^2 \nabla(-\Delta)^{-1}\tilde c+v\cdot \nabla v+\tilde c \nabla c+\varepsilon^2 \nabla (-\Delta)^{-1}F(\tilde c).$$

By the lemma \ref{Produit espace de Besov}, we have also :
 $$\displaylines{ \|v\cdot \nabla c\|_{\dot B_{2,1}^{\frac{d}{2}}}\lesssim \|W\|_{\dot B_{2,1}^{\frac{d}{2}}}\|\tilde c\|_{\dot B_{2,1}^{\frac{d}{2}+1}}+\|\varepsilon \tilde c\|_{\dot B_{2,1}^{\frac{d}{2}-1}}\|\tilde c\|_{\dot B_{2,1}^{\frac{d}{2}+1}}+\|\tilde c\|_{\dot B_{2,1}^{\frac{d}{2}+1}}^2+\|v\|_{\dot B_{2,1}^{\frac{d}{2}}}\|v\|_{\dot B_{2,1}^{\frac{d}{2}+1}}\|\tilde c\|_{\dot B_{2,1}^{\frac{d}{2}+1}}   \hfill\cr\hfill +\|\tilde c\|_{\dot B_{2,1}^{\frac{d}{2}}}\|\tilde c\|_{\dot B_{2,1}^{\frac{d}{2}+1}}^2+\|\tilde c\|_{\dot B_{2,1}^{\frac{d}{2}}}\|\varepsilon \tilde c\|_{\dot B_{2,1}^{\frac{d}{2}-1}}\|\varepsilon \tilde c\|_{\dot B_{2,1}^{\frac{d}{2}+1}}.} $$
We then have by lemma \ref{Inégalités fonctionnelle} and definition of $\mathcal{L}$, $\mathcal{H}$ : 
$$ \|v\cdot \nabla c\|_{\dot B_{2,1}^{\frac{d}{2}}}\lesssim \tilde{\mathcal{L}}\tilde{\mathcal{H}} +\mathcal{L}\mathcal{H}\lesssim \mathcal{L}\mathcal{H},$$
hence the result.
We have the second inequality of the proposition by using lemma \ref{lemme edo2}.
\end{proof}

\subsection{A global well-posedness theorem}
Here is the theorem that we will prove in the rest of this section: 

\begin{theorem}\label{théorème d'existence sur le système après changement de variable}~\\
We assume $\displaystyle\varepsilon' \leq {1}/{2}$ with $\varepsilon'$ defined in \eqref{epsilon'}. Then, there exists a positive constant $\alpha$ such that for all 
$Z_0^\varepsilon=(\tilde{c}_0,v_0)\in \dot B_{2,1}^{\frac{d}{2}}\cap \dot B_{2,1}^{\frac{d}{2}+1} $ satisfying $$ \mathcal{Z}_0^{\varepsilon}\mathrel{\mathop:}=\displaystyle\|\varepsilon \tilde{c}_0\|_{\dot B_{2,1}^{\frac{d}{2}-1}}^{l^-,\varepsilon'}+\|\tilde{c_0}\|_{\dot B_{2,1}^{\frac{d}{2}}}^{l^+,\varepsilon'}+\|v_0\|_{\dot B_{2,1}^{\frac{d}{2}}}^{l}+\|(\tilde{c}_0,v_0)\|_{\dot B_{2,1}^{\frac{d}{2}+1}}^h\leq \alpha,$$ the system \eqref{système avant théorème d'existence} with the initial data $(c_0,v_0)$ admits a unique global-in-time solution $Z=(\tilde c,v)$ in the set \begin{multline*}
E\mathrel{\mathop:}=\bigg\{(\tilde{c},v) \ \bigg| \ \varepsilon\tilde{c}^{\ l^-,\varepsilon'}\in \mathcal{C}_b(\R_+:\dot B_{2,1}^{\frac{d}{2}-1}), \ \varepsilon^3\tilde{c}^{\ l^-,\varepsilon'}\in L^1(\R_+; \dot B_{2,1}^{\frac{d}{2}-1}), \  \tilde{c}^{\ l^+,\varepsilon'}\in \mathcal{C}_b(\R_+:\dot B_{2,1}^{\frac{d}{2}}), \\ \tilde{c}^{\ l^+,\varepsilon'}\in L^1(\R_+;\dot B_{2,1}^{\frac{d}{2}+2}), \ v^l\in \mathcal{C}_b(\R_+;\dot B_{2,1}^{\frac{d}{2}}), \varepsilon v^{l^-,\varepsilon'}\in L^1(\R_+;\dot B_{2,1}^{\frac{d}{2}}), v^{l^+,\varepsilon'}\in L^1(\R_+;\dot B_{2,1}^{\frac{d}{2}+1}), \\ (\tilde{c},v)^h\in\mathcal{C}_b(\R_+;\dot B_{2,1}^{\frac{d}{2}+1})\cap L^1(\R_+,\dot B_{2,1}^{\frac{d}{2}+1}), \ W^l\in\mathcal{C}_b(\R_+;\dot B_{2,1}^{\frac{d}{2}})\cap L^1(\R_+;\dot B_{2,1}^{\frac{d}{2}})\bigg\}
\end{multline*} where we denote $\displaystyle W\mathrel{\mathop:}=-\partial_t v$.

~
\\ 
Moreover, we have the following inequality : $$\mathcal{Z}(t)\leq C \mathcal{Z}_0^{\varepsilon'}$$ where \begin{multline*}
    \mathcal{Z}(t)\mathrel{\mathop:}= \displaystyle\|\varepsilon\tilde{c}\|_{L^\infty\big(\dot B_{2,1}^{\frac{d}{2}-1}\big)}^{l^-,\varepsilon'}+\|\tilde{c}\|_{L^\infty\big(\dot B_{2,1}^{\frac{d}{2}}\big)}^{l^+,\varepsilon'}+\|v\|_{L^\infty \big(\dot B_{2,1}^{\frac{d}{2}}\big)}^{l}+\|(\tilde{c},v)\|_{L^\infty\big(\dot B_{2,1}^{\frac{d}{2}+1}\big)}^h  + \varepsilon^2 \|\varepsilon\tilde{c}\|_{L^1\big(\dot B_{2,1}^{\frac{d}{2}-1}\big)}^{l^-,\varepsilon'}+\varepsilon\|v\|_{L^1\big(\dot B_{2,1}^{\frac{d}{2}}\big)}^{l^-,\varepsilon'} \\+\|\tilde{c}\|_{L^1\big(\dot B_{2,1}^{\frac{d}{2}+2}\big)}^{l^+,\varepsilon'}+\|v\|_{L^1\big(\dot B_{2,1}^{\frac{d}{2}+1}\big)}^{l^+,\varepsilon'}+\|(\tilde{c},v)\|_{L^1\big(\dot B_{2,1}^{\frac{d}{2}+1}\big)}^h +\|W\|_{L^\infty\big(\dot B_{2,1}^{\frac{d}{2}}\big)}+\|W\|_{L^1\big(\dot B_{2,1}^{\frac{d}{2}}\big)}.
\end{multline*}
\end{theorem}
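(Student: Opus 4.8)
The plan is to promote the closed nonlinear estimate of Proposition \ref{estimées finales} to global well-posedness by the classical three-step scheme: local existence together with a continuation criterion, a continuity (bootstrap) argument propagating the smallness \eqref{hypothèse de petitesse} for all times, and a stability estimate at one level of regularity below yielding uniqueness. Since the hard analysis — recovering the full low-frequency dissipation through the damped mode — is already encoded in the inequality $\mathcal{L}(t)+\int_0^t\mathcal{H}\,d\tau\lesssim\mathcal{L}(0)$, the remaining work is mostly structural. The first point is to observe that the quantity $\mathcal{Z}(t)$ of the statement is equivalent to $\sup_{[0,t]}\mathcal{L}+\int_0^t\mathcal{H}\,d\tau$ augmented by the high-frequency control of $W$: comparing \eqref{LH} with the norms entering $\mathcal{Z}$, each $L^\infty$-in-time term is dominated by $\sup_{[0,t]}\mathcal{L}$ and each $L^1$-in-time term by $\int_0^t\mathcal{H}$, while the two terms $\|W\|_{L^\infty(\dot B_{2,1}^{d/2})}$ and $\|W\|_{L^1(\dot B_{2,1}^{d/2})}$ additionally require the high-frequency bound of Lemma \ref{mode amorti hf}, whose right-hand side is itself controlled by $\mathcal{L}$ and $\int_0^t\mathcal{H}$. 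Together with Proposition \ref{estimées finales}, this gives $\mathcal{Z}(t)\lesssim\mathcal{Z}_0$ as long as the solution exists on $[0,t]$ and obeys \eqref{hypothèse de petitesse}.

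Next I would establish local-in-time existence and uniqueness in $\dot B_{2,1}^{d/2}\cap\dot B_{2,1}^{d/2+1}$ by the standard theory of symmetric hyperbolic systems in critical Besov spaces (a Friedrichs regularization or an iterative scheme, in the spirit of \cite{BCD}): after Makino's symmetrization \eqref{changement de variable} the principal part of \eqref{système avant théorème d'existence} is symmetric, and the nonlocal coupling $\varepsilon^2\nabla(-\Delta)^{-1}(\cdots)$ is an order $-1$ smoothing perturbation, harmless for the local theory. The harmless rescaling $x\mapsto\tilde\gamma\overline c\,x$ and the replacement $\varepsilon\to\varepsilon'$ of \eqref{epsilon'} transfer the a priori bounds of Proposition \ref{estimées finales} back to \eqref{système avant théorème d'existence} up to harmless constants. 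This local theory furnishes the continuation criterion: a solution extends past a time $T<\infty$ as soon as $\|(\tilde c,v)\|_{L^\infty([0,T];\dot B_{2,1}^{d/2}\cap\dot B_{2,1}^{d/2+1})}$, equivalently $\mathcal{Z}$, remains finite on $[0,T]$.

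To globalize, I would run a bootstrap argument. Denote by $T^*$ the maximal existence time, let $C_0$ be the constant of Proposition \ref{estimées finales}, choose $\mathcal{L}(0)$ so small that $2C_0\mathcal{L}(0)$ forces \eqref{hypothèse de petitesse} through Lemma \ref{Inégalités fonctionnelle}, and set $T\mathrel{\mathop:}=\sup\{t<T^*:\ \mathcal{L}(\tau)\le 2C_0\mathcal{L}(0)\ \text{on}\ [0,t]\}$. This set is nonempty and relatively closed by continuity of $t\mapsto\mathcal{L}(t)$, and it is open because on it the estimate of Proposition \ref{estimées finales} applies and, the quadratic term $\int_0^t\mathcal{L}\mathcal{H}$ being absorbed by smallness, yields the strict improvement $\mathcal{L}(t)\le C_0\mathcal{L}(0)<2C_0\mathcal{L}(0)$. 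Hence $T=T^*$, so $\mathcal{Z}$ stays bounded and the continuation criterion forces $T^*=+\infty$; the desired bound $\mathcal{Z}(t)\le C\mathcal{Z}_0$ then follows, with all constants independent of $\varepsilon$.

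Uniqueness and stability I would obtain by estimating the difference $\delta Z=Z_1-Z_2$ of two solutions one derivative lower: $\delta Z$ solves a linear transport–damping system whose sources are bilinear in $\delta Z$ and the (already controlled) solutions, so an energy estimate in $\dot B_{2,1}^{d/2-1}$ at low and $\dot B_{2,1}^{d/2}$ at high frequencies, closed by the product laws of Lemma \ref{Produit espace de Besov} and Grönwall, gives $\delta Z\equiv 0$. The genuinely delicate analytic obstacle is already overcome in Proposition \ref{estimées finales}; what demands care here is keeping the continuation criterion, the bootstrap, and especially the lower-regularity uniqueness estimate \emph{uniform in} $\varepsilon$, since the frequency thresholds $2^j\lessgtr\varepsilon$ and the $\varepsilon$-weights in $\mathcal{L}$ and $\mathcal{H}$ degenerate as $\varepsilon\to0$, and the term $\varepsilon^2\nabla(-\Delta)^{-1}$ must be handled without spoiling the estimate at one derivative below. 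I expect this $\varepsilon$-uniformity to be the main point requiring attention.
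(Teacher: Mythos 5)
Your proposal is correct in substance and closes the same key a priori estimate (Proposition \ref{estimées finales}) into global well-posedness, but your construction of the solution follows a genuinely different route from the paper. The paper never invokes a local well-posedness theorem or a continuation criterion for \eqref{système avant théorème d'existence}: it regularizes the system with Friedrichs spectral truncations $J_n$ supported in $\{n^{-1}\le|\xi|\le n\}$, solves the resulting ODE in $L^2$ by Cauchy--Lipschitz, uses $J_nc_n=c_n$ and $J_nv_n=v_n$ to see that the a priori estimates apply uniformly in $n$, so that the maximal-solution alternative already yields $T^n=+\infty$ for the approximate problems, and then passes to the limit by compactness (interpolation to get a uniform bound in $L^2(\dot B_{2,1}^{\frac{d}{2}+1})$, Ascoli plus diagonal extraction, weak compactness to identify the limit in $E$). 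Your scheme instead black-boxes local existence for symmetric hyperbolic systems in critical spaces and globalizes by bootstrap; this is legitimate, but two remarks are in order. First, the bootstrap you describe is already incorporated in the paper's Proposition \ref{estimées finales} via Lemma \ref{lemme edo2} (a continuity-in-time argument), so in the paper's architecture no further continuation reasoning is required once the approximate solutions are global. Second, the local theory you cite is not an off-the-shelf result in this setting: the functional framework mixes homogeneous Besov spaces with $\varepsilon$-dependent frequency thresholds, and the nonlocal term $\varepsilon^2\nabla(-\Delta)^{-1}G(\tilde c)$ loses one derivative at low frequencies (which is precisely why the very low frequencies of the density are measured in $\dot B_{2,1}^{\frac{d}{2}-1}$), so calling it ``harmless for the local theory'' glosses over the same $\varepsilon$-uniformity issue you flag at the end; the Friedrichs scheme buys exactly this, since global solvability of the truncated ODE needs no structure beyond the uniform bounds, at the price of a compactness passage and of checking that the estimates survive the insertion of $J_n$. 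Your uniqueness argument (estimating the difference one derivative below, closing with the product laws of Lemma \ref{Produit espace de Besov} and Grönwall) coincides with the paper's.
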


The first step is to approximate \eqref{système avec G}.

~\\
\underline{\textbf{{\emph{(1)} Approximate systems}}}~\\
Let us take $J_n$ the spectral truncation operator 
on $\displaystyle \{\xi\in\R^d,\;  n^{-1}\leq |\xi|\leq n\}$. 

We consider the following system : $$\frac{d}{dt}\begin{pmatrix} \tilde{c}\\ v \end{pmatrix}+\begin{pmatrix} J_n\left(J_n(v)\cdot \left(\nabla J_n(c)\right)\right)+\tilde{\gamma} J_n\left(J_n(c)\dive\left(J_n(v)\right)\right) \\ J_n\left(J_n(v)\cdot \nabla \left(J_n(v)\right)\right)+\tilde{\gamma}J_n\left(J_n(c)\nabla\left(J_n(c)\right)\right)+J_n(v) \end{pmatrix}=\begin{pmatrix} 0 \\ -\varepsilon^2 \nabla (-\Delta)^{-1} J_n(G(J_n(\tilde{c}))) \end{pmatrix}.$$ 
\begin{itemize}
    \item[$\bullet$] By the Cauchy-Lipschitz theorem, we have (using the spectral truncation operator) that this system admits a maximal solution $(c_n,v_n)\in \mathcal{C}^1([0,T^n[: L^2)$ with initial data ($J_n c_0, J_n v_0)$ for all $n\in\N$.
\item[$\bullet$] We have $J_n c_n=c_n$ and  $J_n(v_n)=v_n$ (by using the uniqueness in the previous system) and thus:  $$\left\{ \begin{array}{ll} \partial_t c_n + J_n(v_n\cdot \nabla c_n) +\tilde{\gamma} J_n(c_n \dive(v_n))=0, \\ \partial_t v_n +J_n(v_n\cdot \nabla v_n) + \tilde{\gamma} J_n(c_n \nabla c_n) + v_n=- \varepsilon^2\nabla \left(-\Delta\right)^{-1}J_n\left(G(\tilde{c_n})\right). \end{array} \right. $$
\item[$\bullet$] From the lemma \ref{estimées finales}, we deduce (the $n$ index corresponding to the sequence $(c_n,v_n)$): $$\mathcal{L}^n(t)+\int_0^t H^n(\tau)d\tau \lesssim \mathcal{L}^n(0)\leq \mathcal{L}(0).$$
In particular (by argument of extension of the maximal solution), we have that $T^n=+\infty$. 
\end{itemize}
~\\
\underline{\textbf{{\emph{(2)} Convergence of the sequence}}}~\\
The previous estimates guarantee that  $(\tilde{c}_n,v_n)_{n\in\N}$ is a bounded sequence of $E,$ where $E$ is the functional space described in the theorem.

In particular,  $(\tilde{c}_n,v_n)_{n\in\N}$  is bounded in $L^\infty(\R_+; \dot B_{2,1}^{\frac{d}{2}})\cap L^1(\R_+;\dot B_{2,1}^{\frac{d}{2}+2})$  and in $L^\infty(\R_+; \dot B_{2,1}^{\frac{d}{2}+1})\cap L^1(\R_+;\dot B_{2,1}^{\frac{d}{2}+1})$ at low and respectively high frequencies level, so bounded (by interpolation) in $L^2\left(\dot B_{2,1}^{\frac{d}{2}+1}\right)$.

We know that $\dot B_{2,1}^{\frac{d}{2}}$ is included continuously in $L^\infty$, hence $\dot B_{2,1}^{\frac{d}{2}+1}$ is locally compact in $L^2$.

We can therefore apply the Ascoli theorem and after diagonal extraction, we gather that, up to subsequence,  $(c_n,v_n)_{n\in\N}$ converges to some $(c,v)$ in $\mathcal{C}([0,T[;\mathcal{S}'(\R^d))$. 

By classical arguments of weak  compactness, one can conclude as in e.g \cite{BCD} that $(c,v)$ belongs to $E$ and that $(c,v)$ is a solution of the initial system.

\subsection{Proof of uniqueness}~\\
Let $Z_1=(c_1,v_1)$ and $Z_2=(c_2,v_2)$ be two solutions. We denote $\delta c\mathrel{\mathop:}=c_1-c_2,$ $\delta v\mathrel{\mathop:}=v_1-v_2$ and $\delta Z\mathrel{\mathop:}= Z_1-Z_2$.

In particular, we have :
$$\left\{\begin{array}{l}
     \partial_t \delta c+v_2\cdot \nabla\delta c+\tilde{\gamma}c_2\dive(\delta v)=-\delta v \nabla c_1-\tilde{\gamma}\delta c \dive(v_1) \\[1ex]
     \partial_t \delta v+v_2\cdot \nabla \delta v+\tilde{\gamma} c_2 \nabla \delta c +\varepsilon'^2 \nabla(-\Delta)^{-1}\delta c +\delta v=-\delta v \cdot \nabla v_1 -\tilde{\gamma}\delta c \nabla c_1-\varepsilon^2\nabla (-\Delta)^{-1}\left(F(\tilde{c_1})-F(\tilde{c_2})\right).
\end{array} \right.$$

\begin{lemma}
    We have the inequality : $$\displaylines{\|\delta Z(t)\|_{\dot B_{2,1}^{\frac{d}{2}}}^h+\|\delta v(t)\|_{\dot B_{2,1}^{\frac{d}{2}}}^l+ \|\delta c(t)\|_{\dot B_{2,1}^{\frac{d}{2}}}^{l^+,\varepsilon}+\varepsilon\|\delta c(t)\|_{\dot B_{2,1}^{\frac{d}{2}-1}}^{l^-,\varepsilon}\lesssim \int_0^t \left(\mathcal{L}_1+\mathcal{L}_2+\mathcal{H}_1+\mathcal{H}_2+1\right)(\tau) \bigg(\|\delta Z(t)\|_{\dot B_{2,1}^{\frac{d}{2}}}^h \hfill\cr\hfill+\|\delta v(t)\|_{\dot B_{2,1}^{\frac{d}{2}}}^l + \|\delta c(t)\|_{\dot B_{2,1}^{\frac{d}{2}}}^{l^+,\varepsilon}+\varepsilon\|\delta c(t)\|_{\dot B_{2,1}^{\frac{d}{2}-1}}^{l^-,\varepsilon}\bigg)d\tau} $$ where $\mathcal{L}_i, \mathcal{H}_i$ for $i \in\{1,2\}$ correspond to $\mathcal{L}$ and $\mathcal{H}$ in \eqref{LH}  for $Z_i$.
\end{lemma}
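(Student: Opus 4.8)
The plan is to treat the system satisfied by $\delta Z=(\delta c,\delta v)$ as a \emph{linearization} of \eqref{après changement de variable}: its left-hand side has exactly the principal structure already analysed in the previous subsections (transport by $v_2$, acoustic coupling $\tilde\gamma c_2\nabla\,\cdot$, damping on $\delta v$, and the order $-1$ term $\varepsilon'^2\nabla(-\Delta)^{-1}\delta c$), while the right-hand side consists only of terms that are \emph{products} of $\delta Z$ (or $\nabla\delta Z$) with the given solutions $Z_1,Z_2$. I would therefore reproduce, one derivative below the solution space, the three localized energy estimates already established — a high-frequency estimate, a medium-frequency ($l^+,\varepsilon$) estimate and a very-low-frequency ($l^-,\varepsilon$) estimate — and then recombine and close by Gronwall. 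The norms on the left are dictated by the linear analysis: $d/2$ for high frequencies and for the whole low-frequency part of $\delta v$, $d/2$ for the medium frequencies of $\delta c$, and $d/2-1$ with the $\varepsilon$ weight for the very low frequencies of $\delta c$.

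For the \emph{high frequencies} I would reuse the Lyapunov functional of Lemma \ref{lemme hf partie 1} and the high-frequency proposition, $\mathcal L_j=\tfrac12\|\delta Z_j\|_{L^2}^2+\tfrac{2^{-2j}}{4}\int_{\R^d}\nabla\delta c_j\cdot\delta v_j\,dx$, localized by $\dot\Delta_j$ in the high range. Pairing the two equations with $(\delta c_j,\delta v_j)$ in $L^2$, the transport term $v_2\cdot\nabla$ produces the commutators $[v_2\cdot\nabla,\dot\Delta_j]\delta Z$ and $[c_2/\overline c,\dot\Delta_j]\nabla\delta Z$, controlled as in Lemma \ref{lemme hf partie 1} by $a_j\,2^{-j(d/2)}\,\|\nabla Z_2\|_{\dot B_{2,1}^{d/2}}\,\|\delta Z\|_{\dot B_{2,1}^{d/2}}$, while $\varepsilon'^2\nabla(-\Delta)^{-1}\delta c$ is of order $-1$ hence negligible for $j\geq0$, so the functional still decays. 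The genuinely new forcing terms $\delta v\,\nabla c_1$, $\delta c\,\dive v_1$, $\delta v\cdot\nabla v_1$, $\delta c\,\nabla c_1$, and $\varepsilon^2\nabla(-\Delta)^{-1}(F(\tilde c_1)-F(\tilde c_2))$ are bounded via the product and composition laws of Lemmas \ref{Produit espace de Besov} and \ref{fonction régulière besov} — the last one after writing $F(\tilde c_1)-F(\tilde c_2)=\delta c\int_0^1 F'(\tilde c_2+s\,\delta c)\,ds$ — each being a product of a $\delta Z$-factor at regularity $d/2$ with a $Z_i$-factor at regularity $d/2+1$, giving a bound $(\mathcal L_i+\mathcal H_i)(\tau)\,\|\delta Z\|_{\dot B_{2,1}^{d/2}}$. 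Summing with weight $2^{jd/2}$ and applying Lemma \ref{lemme edo} yields the high-frequency part.

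For the \emph{low frequencies} the order $-1$ coupling is not negligible and breaks the symmetry, so I would repeat the device used for \eqref{linéarisé tilde}: set $\delta u=\dive\delta v$, symmetrize the $(\delta c,\delta u)$ subsystem through the rescaled unknown $\delta w=(|\xi|^2+\varepsilon'^2)^{-1/2}\widehat{\delta u}$, thereby recovering the Lyapunov estimate \eqref{bf partie2} and the two regimes $2^j\leq\varepsilon$ and $\varepsilon\leq2^j\leq1$; the incompressible part obeys $\partial_t\mathcal P\delta v+\mathcal P\delta v=\mathcal P(\text{source})$ and is damped directly. Reassembling $\delta v=\mathcal P\delta v+\mathcal Q\delta v$ gives the low-frequency control of $\delta v$ at level $d/2$ and of $\delta c$ in the $l^+,\varepsilon$ and $l^-,\varepsilon$ norms; the forcing terms are bounded again by product laws, taking care of the $\varepsilon$-weights in the very-low-frequency regime (for instance a term such as $\varepsilon\,\delta v\,\nabla c_1$ is estimated by $\|\delta v\|_{\dot B_{2,1}^{d/2}}^{l}$ times a factor bounded by $\mathcal L_1$), so that every source closes against exactly the four norms appearing on the left.

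The delicate point — and the reason for the extra $+1$ in the Gronwall coefficient — is that the zeroth-order damping and the genuinely \emph{linear} (non-quadratic) parts of the transport and coupling contribute factors that are not small, so they cannot be absorbed into the left-hand side and must be retained as the constant $1$ in $(\mathcal L_1+\mathcal L_2+\mathcal H_1+\mathcal H_2+1)$. The remaining difficulty is purely bookkeeping of $\varepsilon$-powers so that each term closes, together with the fact that $\mathcal H_i\in L^1(\R_+)$, which keeps the time integral finite. Adding the three localized estimates then produces the stated integral inequality; since $\delta Z(0)=0$, Gronwall's lemma forces $\delta Z\equiv0$, which is the uniqueness claim.
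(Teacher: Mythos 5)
Your high-frequency treatment coincides with the paper's: localize, keep the transport and acoustic coupling on the left so that commutators $[v_2\cdot\nabla,\dot\Delta_j]$ and integration by parts handle them, discard the order $-1$ Poisson term as negligible for $j\geq 0$, and bound the genuine sources $\delta v\cdot\nabla c_1$, $\delta c\,\dive v_1$, $\varepsilon^2\nabla(-\Delta)^{-1}(F(\tilde c_1)-F(\tilde c_2))$ by product/composition laws, giving the factor $(\mathcal H_1+\mathcal H_2)\|\delta Z\|^h_{\dot B_{2,1}^{d/2}}$. Where you diverge is at low frequencies: you propose to re-run the whole linear machinery (divergence unknown $\delta u$, rescaled variable $\widehat{\delta w}=(|\xi|^2+\varepsilon'^2)^{-1/2}\widehat{\delta u}$, Fourier Lyapunov functional, Helmholtz splitting), whereas the paper does something much more elementary there: since uniqueness does not require sharp dissipation, it simply estimates the very low and medium frequencies of $\delta c$ in $L^\infty$-in-time from the transport equation, treating $\dive\delta v$ as a source, and estimates $\delta v$ directly from its equation using the order-zero damping, which already yields an $L^1$-in-time bound at rate $1$; the linear-in-$\delta Z$ sources ($\dive\delta v$ in the $\delta c$ equation, $\nabla\delta c$ and $\varepsilon'^2\nabla(-\Delta)^{-1}\delta c$ in the $\delta v$ equation) are exactly what produces the $+1$ in the Gr\"onwall factor, as you correctly identified. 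Your route can be made to work — the multiplier symmetrization applies to the constant-coefficient part and all variable-coefficient terms go to the right, and the multiplier weight $(|\xi|^2+\varepsilon'^2)^{-1/2}$ only helps on the second source — but note that it buys you nothing for uniqueness (the parabolic decay rate $2^{2j}+\varepsilon^2$ is never used in closing the inequality), and it forces you to estimate $v_2\cdot\nabla\delta c$ and $\tilde c_2\dive\delta v$ as sources at regularity $d/2$ in the medium-frequency range. That last step is not mere bookkeeping: a blunt product estimate gives $\|v_2\|_{\dot B_{2,1}^{d/2}}\|\delta c\|_{\dot B_{2,1}^{d/2+1}}$, which loses a derivative on the high-frequency part of $\delta c$ relative to the norm $\|\delta Z\|^h_{\dot B_{2,1}^{d/2}}$ appearing on the left; you must split $\delta c=\delta c^l+\delta c^h$ inside the product (using that for low output frequencies one may lower the regularity index, and that low-frequency norms absorb extra derivatives) exactly as the paper does when it writes the bound $\|Z_2\|_{\dot B_{2,1}^{d/2}}\bigl(\|\delta Z\|^{l}_{\dot B_{2,1}^{d/2+1}}+\|\delta Z\|^{h}_{\dot B_{2,1}^{d/2}}\bigr)$. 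With that point made explicit, your argument closes and, combined with $\delta Z(0)=0$, $\mathcal L_i\in L^\infty$ and $\mathcal H_i\in L^1$, yields uniqueness by Gr\"onwall just as in the paper.
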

Once this lemma is proven, it is easy to conclude the uniqueness by Grönwall's lemma. 

\begin{proof}~
\begin{enumerate}
\item \underline{Estimate for high frequencies} : 

Let us start by proving an estimate for high frequencies.
By applying the localization operation $\dot\Delta_j$, we get : 
$$\left\{\begin{array}{l}
    \partial_t \delta c_j+v_2\cdot \nabla \delta c_j+\tilde{\gamma}c_2\dive(\delta v_j)=\left[v_2\cdot \nabla,\dot \Delta_j\right]\delta c  +\left[\tilde{\gamma}c_2,\dot\Delta_j\right]\dive(\delta c)-\dot \Delta_j\left(\delta v\cdot \nabla c_1+\tilde{\gamma}\delta c\dive(v_1)\right) \\
\partial_t \delta v_j+v_2\cdot \nabla \delta v_j+\tilde{\gamma} c_2 \nabla \delta c_j+\varepsilon'^2 \nabla (-\Delta)^{-1}\delta c_j+\delta v_j  =\left[v_2\cdot \nabla,\dot \Delta_j\right]\delta v+\left[\tilde{\gamma}c_2\nabla,\dot\Delta_j\right]\delta c \\ \hfill -\dot\Delta_j \left(\delta v\cdot \nabla v_1+\tilde{\gamma}\delta c\nabla c_1+\varepsilon^2\nabla (-\Delta)^{-1}\left(F(\tilde{c}_1)-F(\tilde{c}_2)\right)\right)
\end{array} \right.$$


On the one hand, owing to commutator estimates (see e.g \cite{BCD}), we have: 
$$\begin{aligned}\|\left[v_2\cdot \nabla,\dot \Delta_j\right]\delta c  +\left[\tilde{\gamma}c_2,\dot\Delta_j\right]\dive(\delta c)+\left[v_2\cdot \nabla,\dot \Delta_j\right]\delta v+\left[\tilde{\gamma}c_2\nabla,\dot\Delta_j\right]\delta c\|_{\dot B_{2,1}^{\frac{d}{2}}}^h&\lesssim \|Z_2\|_{\dot B_{2,1}^{\frac{d}{2}+1}}\|\delta Z\|_{\dot B_{2,1}^{\frac{d}{2}}}^h \\&\lesssim \mathcal{H}_2(\tau) \|\delta Z\|_{\dot B_{2,1}^{\frac{d}{2}}}^h.\end{aligned}$$

On the other hand, we have by integration by parts:
\begin{itemize}
    \item[$\bullet$] $\displaystyle -\int_{\R^d} v_2\cdot \nabla \delta c_j \delta c_j dx=\frac{1}{2}\int_{\R^d}\dive(v_2)|\delta c_j|^2 dx$,
    \item[$\bullet$] $\displaystyle -\int_{\R^d} v_2\cdot \nabla \delta v_j \cdot\delta v_j dx=\frac{1}{2}\int_{\R^d}\dive(v_2)|\delta v_j|^2 dx$,
    \item[$\bullet$] $\displaystyle -\int_{\R^d}\tilde{\gamma}\tilde{c}_2 \dive(\delta v_j)\delta c_j dx-\int_{\R^d} \tilde{\gamma}\tilde{c}_2 \nabla \delta c_j\cdot \delta v_j dx=-\int_{\R^d}\tilde{\gamma}\tilde{c}_2\dive(\delta c_j \delta v_j)dx=\int_{\R^d}\tilde{\gamma} \nabla c_2 \cdot \left(\delta c_j \delta v_j\right)dx.$
\end{itemize}
Then we have for all $j\leq -1$ : \begin{align*}
    -\int_{\R^d} v_2\cdot \nabla \delta c_j \delta c_j dx-\int_{\R^d} v_2\cdot \nabla \delta v_j \cdot\delta v_j dx &-\int_{\R^d}\tilde{\gamma}\tilde{c}_2 \dive(\delta v_j)\delta c_j dx-\int_{\R^d} \tilde{\gamma}\tilde{c}_2 \nabla \delta c_j\cdot \delta v_j dx \\
&=\frac{1}{2}\int_{\R^d} \dive(v_2) |\delta Z_j|^2 dx+\int_{\R^d} \tilde{\gamma}\nabla c_2\cdot(\delta c_j\delta v_j)dx \\ &\lesssim  a_j 2^{-j\frac{d}{2}}\|Z_2\|_{\dot B_{2,1}^{\frac{d}{2}+1}}\|\delta Z\|_{\dot B_{2,1}^{\frac{d}{2}}}\|\delta Z\|_{\dot B_{2,1}^{\frac{d}{2}}} \\ &\lesssim  a_j 2^{-j\frac{d}{2}}\mathcal{H}_2(t)\|\delta Z\|_{\dot B_{2,1}^{\frac{d}{2}}}\|\delta Z\|_{\dot B_{2,1}^{\frac{d}{2}}}.\end{align*}

We have also : $$\begin{aligned}\|-\dot \Delta_j\left(\delta v\cdot \nabla c_1+\tilde{\gamma}\delta c\dive(v_1)\right)&-\dot\Delta_j \left(\delta v\cdot \nabla v_1+\tilde{\gamma}\delta c\nabla c_1+\varepsilon^2\nabla (-\Delta)^{-1}\left(F(\tilde{c}_1)-F(\tilde{c}_2)\right)\right)\|_{L^2} \\&\lesssim a_j 2^{-j\frac{d}{2}}\|\delta Z\|_{\dot B_{2,1}^{\frac{d}{2}}}\|Z_1\|_{\dot B_{2,1}^{\frac{d}{2}+1}}\\&\lesssim a_j 2^{-j\frac{d}{2}}\|\delta Z\|_{\dot B_{2,1}^{\frac{d}{2}}} \mathcal{H}_1.\end{aligned}$$

We then have (taking the scalar product with $\delta Z_j$ in the previous system) the following estimate: 
$$\|\delta Z\|_{\dot B_{2,1}^{\frac{d}{2}}}^h+\int_0^t \|\delta Z\|_{\dot B_{2,1}^{\frac{d}{2}}}^hd\tau\lesssim \int_0^t (\mathcal{H}_1+\mathcal{H}_2)(\tau)\|\delta Z\|_{\dot B_{2,1}^{\frac{d}{2}}}^h d\tau.$$

\item \underline{Estimates for low frequencies} :

As for the study of the Euler-Poisson system, we will look at $\delta v$ in $\dot B_{2,1}^{\frac{d}{2}}$, the very low frequencies of $\varepsilon \delta c$ in $\dot B_{2,1}^{\frac{d}{2}-1}$ and the medium ones of $\delta c$ in $\dot B_{2,1}^{\frac{d}{2}}$.

For the above system, we obtain the following estimates:
$$\left\{\begin{array}{l}
\|\varepsilon\delta c(t)\|_{\dot B_{2,1}^{\frac{d}{2}-1}}^{l^-,\varepsilon}\lesssim \displaystyle \int_0^t \varepsilon\left[\|\delta v\|_{\dot B_{2,1}^{\frac{d}{2}}}^{l^-,\varepsilon}+\|(Z_1,Z_2)\|_{\dot B_{2,1}^{\frac{d}{2}}}\|\delta Z\|_{\dot B_{2,1}^{\frac{d}{2}}} \right]d\tau
\\
     \|\delta c(t)\|_{\dot B_{2,1}^{\frac{d}{2}}}^{l^+,\varepsilon} \lesssim \displaystyle\int_0^t \left[\|\delta v\|_{\dot B_{2,1}^{\frac{d}{2}+1}}^{l^+,\varepsilon}+ \|Z_2\|_{\dot B_{2,1}^{\frac{d}{2}}}\left(\|\delta Z\|_{\dot B_{2,1}^{\frac{d}{2}+1}}^{l}+\|\delta Z\|_{\dot B_{2,1}^{\frac{d}{2}}}^{h}\right)+\|\delta Z\|_{\dot B_{2,1}^{\frac{d}{2}}}\|Z_1\|_{\dot B_{2,1}^{\frac{d}{2}+1}}\right]d\tau
\\ \displaystyle
\|\delta v\|_{\dot B_{2,1}^{\frac{d}{2}+1}}^{l}+\int_0^t \|\delta v\|_{\dot B_{2,1}^{\frac{d}{2}+1}}^{l} d\tau\lesssim \int_0^t \bigg[\varepsilon^2 \|\delta c\|_{\dot B_{2,1}^{\frac{d}{2}-1}}^{l^-,\varepsilon}+\|\delta c\|_{\dot B_{2,1}^{\frac{d}{2}+1}}^{l^+,\varepsilon}+\|Z_2\|_{\dot B_{2,1}^{\frac{d}{2}}}\left(\|\delta Z\|_{\dot B_{2,1}^{\frac{d}{2}+1}}^{l}+\|\delta Z\|_{\dot B_{2,1}^{\frac{d}{2}}}^{h}\right) \cr\hfill +\|\delta Z\|_{\dot B_{2,1}^{\frac{d}{2}}}\|Z_1\|_{\dot B_{2,1}^{\frac{d}{2}+1}}+\varepsilon^2\big(\|\delta c\|_{\dot B_{2,1}^{\frac{d}{2}}}\|(c_1,c_2)\|_{\dot B_{2,1}^{\frac{d}{2}-1}} \cr\hfill +\|\delta c\|_{\dot B_{2,1}^{\frac{d}{2}-1}}\|(c_1,c_2)\|_{\dot B_{2,1}^{\frac{d}{2}}}\big)\bigg]d\tau.
\end{array} \right.$$

\item \underline{Final estimate} :
We now put together all the estimates and observe that some terms in the right-hand side are negligible compared to the 
left-hand side (thanks in particular to our lemma about \ref{estimées finales} a priori estimates) and we obtain the final result.
\end{enumerate}
\end{proof}

We deduce by change of variable \eqref{changement de variable 2} and lemma \ref{dilatation} the statement of the theorem \ref{théorème Euler-Poisson}.

\section{Keller-Segel parabolic/elliptical system}
The goal of this section is to justify the convergence of the density solution of the first equation of \eqref{système avant passage à la limite} to the unique solution of \eqref{Keller-Segel} when $\varepsilon$ tends to 0.

We deduce from the theorem \ref{théorème Euler-Poisson} the following theorem which will allow us to study the singular limit of the Euler-Poisson system:
\begin{theorem}\label{dernier Euler-Poisson} Let be $\varepsilon>0$. Let $\varepsilon'$ be defined as \eqref{epsilon'}.
There exists a positive constant $\alpha$ such that for all $\displaystyle\varepsilon'\leq \frac{1}{2}$ and data $Z_0^\varepsilon=(\varrho_0^\varepsilon-\overline{\varrho},v_0^\varepsilon)\left(\dot B_{2,1}^{\frac{d}{2}-1}\cap \dot B_{2,1}^{\frac{d}{2}+1}\right)\times \left(\dot B_{2,1}^{\frac{d}{2}}\cap \dot B_{2,1}^{\frac{d}{2}+1}\right)  $ satisfying : 

$$\mathcal{Z}_0^{\varepsilon}\mathrel{\mathop:}= \|\varrho_0^\varepsilon-\overline{\varrho}\|_{\dot B_{2,1}^{\frac{d}{2}-1}}^{l^-, \ \varepsilon' \varepsilon^{-1}}+ \|\varrho_0^\varepsilon-\overline{\varrho}\|_{\dot B_{2,1}^{\frac{d}{2}}}^{l^+,\ \varepsilon' \varepsilon^{-1}, \ \varepsilon}+\varepsilon\| v_0^\varepsilon\|_{\dot B_{2,1}^{\frac{d}{2}}}^{l,\ \varepsilon^{-1}}+\varepsilon\|\left(\varrho_0^\varepsilon-\overline{\varrho},\varepsilon v_0^\varepsilon\right)\|_{\dot B_{2,1}^{\frac{d}{2}+1}}^{h,\ \varepsilon^{-1}}\leq \alpha, $$ the system \eqref{Euler-Poisson initial} with the initial data $(\varrho_0^\varepsilon,v_0^\varepsilon)$ admits a unique global-in-time solution $Z^\varepsilon=(\varrho^\varepsilon-\bar\rho,v^\varepsilon)$ in the set 
\begin{multline*}
\tilde{E}\mathrel{\mathop:}=\bigg\{(\varrho^\varepsilon-\overline{\varrho},v^\varepsilon) \ \bigg| \  (\varrho^\varepsilon-\overline{\varrho})^{l^-,\ \varepsilon'\varepsilon^{-1}}\in \mathcal{C}_b(\R_+:\dot B_{2,1}^{\frac{d}{2}-1}), \ \varepsilon(\varrho^\varepsilon-\overline{\varrho})^{l^-,\ \varepsilon'\varepsilon^{-1}}\in L^1(\R_+; \dot B_{2,1}^{\frac{d}{2}-1}), \\ \ (\varrho^\varepsilon-\overline{\varrho})^{l^+, \ \varepsilon'\varepsilon^{-1}, \ \varepsilon^{-1}}\in \mathcal{C}_b(\R_+:\dot B_{2,1}^{\frac{d}{2}}), \ 
 \varepsilon(\varrho^\varepsilon-\overline{\varrho})^{l^+, \ \varepsilon'\varepsilon^{-1}}\in L^1(\R_+;\dot B_{2,1}^{\frac{d}{2}+1}), \ (\varrho^\varepsilon-\overline{\varrho})^{l^+, \ \varepsilon'\varepsilon^{-1}, \ \varepsilon^{-1}}\in L^1(\R_+;\dot B_{2,1}^{\frac{d}{2}+1}), \\  \varepsilon(v^\varepsilon) ^{l, \ \varepsilon^{-1}}\in \mathcal{C}_b(\R_+;\dot B_{2,1}^{\frac{d}{2}}), \ (v^\varepsilon)^{l^-, \ \varepsilon'\varepsilon^{-1}}\in L^1(\R_+;\dot B_{2,1}^{\frac{d}{2}}), \ (v^\varepsilon)^{l^+, \ \varepsilon'\varepsilon^{-1}}\in L^1(\R_+;\dot B_{2,1}^{\frac{d}{2}+1}) , \\ (\varrho^\varepsilon-\overline{\varrho},v^\varepsilon)^{h, \varepsilon^{-1}}\in\mathcal{C}_b(\R_+;\dot B_{2,1}^{\frac{d}{2}-1})\cap L^1(\R_+;\dot B_{2,1}^{\frac{d}{2}+1}), \ w^\varepsilon\in \mathcal{C}_b(\R_+;\dot B_{2,1}^{\frac{d}{2}})\cap L^1(\R_+;\dot B_{2,1}^{\frac{d}{2}}) \bigg\}
\end{multline*}

where we have denoted $\displaystyle w^\varepsilon\mathrel{\mathop:}=\varepsilon\frac{\nabla\left(P(\varrho^\varepsilon)\right)}{\varrho^\varepsilon}+ v^\varepsilon+\varepsilon\nabla (-\Delta)^{-1}(\varrho^\varepsilon-\overline{\varrho})$.

~

Moreover, we have the following inequality : $$\mathcal{Z}^\varepsilon(t)\leq C \mathcal{Z}_0^\varepsilon$$ where \begin{multline*}
    \mathcal{Z}^\varepsilon(t)\mathrel{\mathop:}= \displaystyle\|\varrho^\varepsilon-\overline{\varrho}\|_{L^\infty\left(\dot B_{2,1}^{\frac{d}{2}-1}\right)}^{l^-,\ \varepsilon'\varepsilon^{-1}}+\|\varrho^\varepsilon-\overline{\varrho}\|_{L^\infty\left(\dot B_{2,1}^{\frac{d}{2}}\right)}^{l^+,\ \varepsilon'\varepsilon^{-1}, \ \varepsilon^{-1} }+\varepsilon\|v^\varepsilon\|_{L^\infty \left(\dot B_{2,1}^{\frac{d}{2}}\right)}^{l, \  \varepsilon^{-1}}+\varepsilon\|(\varrho^\varepsilon-\overline{\varrho},\varepsilon v^\varepsilon)\|_{L^\infty\left(\dot B_{2,1}^{\frac{d}{2}+1}\right)}^{h, \ \varepsilon^{-1}}
    \\
   + \|\varrho^\varepsilon-\overline{\varrho}\|_{L^1\left(\dot B_{2,1}^{\frac{d}{2}-1}\right)}^{l^-,\ \varepsilon'\varepsilon^{-1}}+\|v^\varepsilon\|_{L^1\left(\dot B_{2,1}^{\frac{d}{2}}\right)}^{l^-,\ \varepsilon'\varepsilon^{-1}}+\|\varrho^\varepsilon-\overline{\varrho}\|_{L^1\left(\dot B_{2,1}^{\frac{d}{2}+2}\right)}^{l^+, \ \varepsilon}+\|v^\varepsilon\|_{L^1\left(\dot B_{2,1}^{\frac{d}{2}+1}\right)}^{l^+,\ \varepsilon}\\ +\varepsilon^{-1}\|(\varrho^\varepsilon-\overline{\varrho},\varepsilon v^\varepsilon)\|_{L^1\left(\dot B_{2,1}^{\frac{d}{2}+1}\right)}^{h, \varepsilon^{-1}} +\|w^\varepsilon\|_{L^\infty\left(\dot B_{2,1}^{\frac{d}{2}}\right)}+\varepsilon^{-2}\|w^\varepsilon\|_{L^1\left(\dot B_{2,1}^{\frac{d}{2}}\right)}.
\end{multline*}
\end{theorem}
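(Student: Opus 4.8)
The plan is to obtain Theorem \ref{dernier Euler-Poisson} by transporting the a priori estimate already proved for the Makino-symmetrized, space--time rescaled system (Theorem \ref{théorème d'existence sur le système après changement de variable}, which also underlies Theorem \ref{théorème Euler-Poisson}) back to the original Euler--Poisson unknowns $(\varrho^\varepsilon,v^\varepsilon)$. The only genuine work is bookkeeping: one must follow the precise powers of $\varepsilon$ and the displacement of the frequency cut-offs under the chain of substitutions. First I would start from the global solution $(\tilde c,v)$ of \eqref{après changement de variable} furnished by Theorem \ref{théorème d'existence sur le système après changement de variable}, with cut-offs at $|\xi|=\varepsilon'$ and $|\xi|=1$ and the control $\mathcal{Z}(t)\leq C\,\mathcal{Z}_0^{\varepsilon'}$. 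Then I would undo the Makino change of unknown \eqref{changement de variable}: since the map relating $\tilde c$ and $\varrho$ is a smooth diffeomorphism near $\overline\varrho$ and the solution is small in $\dot B_{2,1}^{d/2}$, the composition estimate (lemma \ref{fonction régulière besov}) and the product laws (lemma \ref{Produit espace de Besov}) yield $\|\varrho^\varepsilon-\overline\varrho\|\simeq\|\tilde c\|$ in each norm entering $\mathcal{Z}^\varepsilon$, so the bounds pass to the density unknown.

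The core step is to apply lemma \ref{dilatation} to the joint space--time (and velocity) rescaling linking \eqref{après changement de variable} to \eqref{Euler-Poisson initial}. A spatial dilation by the factor $\varepsilon$, as in \eqref{changement de variable 2}, sends a dyadic block localized at $|\xi|$ to one localized at $\varepsilon|\xi|$, hence moves the thresholds $\varepsilon'$ and $1$ to the values $\varepsilon'\varepsilon^{-1}$ and $\varepsilon^{-1}$ appearing in the statement, and multiplies each homogeneous norm $\|\cdot\|_{\dot B_{2,1}^s}$ by $\varepsilon^{\frac d2-s}$. The continuity equation ties the velocity normalization to the time normalization, so recording the solution in the diffusive scaling \eqref{changement de variable diffusif}, i.e. measuring the velocity through $\tilde v^\varepsilon=\varepsilon^{-1}v^\varepsilon$, is exactly what produces the extra factors of $\varepsilon$ carried by the velocity terms (the prefactor $\varepsilon$ on $\|v_0^\varepsilon\|^{l}$ and the weight $\varepsilon v_0^\varepsilon$ inside the high-frequency norm) and the extra power in $\varepsilon^{-2}\|w^\varepsilon\|_{L^1(\dot B_{2,1}^{d/2})}$, the time integration turning $L^1$ into $\varepsilon^{-1}L^1$ while $L^\infty$ stays invariant. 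A direct computation identifies $w^\varepsilon$ with the image of the symmetrized damped mode $W=-\partial_t v$ under these substitutions, so its $L^\infty$ and $L^1$ control is inherited from the last two terms of $\mathcal{Z}(t)$ in Theorem \ref{théorème d'existence sur le système après changement de variable}.

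It then remains to check that the hypothesis and the conclusion match term by term. Each summand of $\mathcal{Z}_0^\varepsilon$ and of $\mathcal{Z}^\varepsilon(t)$ in the present statement is, after the substitutions above, a fixed constant times the corresponding summand of $\mathcal{Z}_0^{\varepsilon'}$ and $\mathcal{Z}(t)$; consequently the smallness assumption $\mathcal{Z}_0^\varepsilon\leq\alpha$ is precisely the rescaled smallness assumption, and the estimate $\mathcal{Z}^\varepsilon(t)\leq C\,\mathcal{Z}_0^\varepsilon$ is the rescaled a priori bound. Uniqueness and the membership $Z^\varepsilon\in\tilde E$ transfer unchanged because the composed change of variable is a bijection between the corresponding solution spaces.

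The main difficulty is organizational rather than analytic. One has to pin down the correct joint time/space/velocity rescaling, which is constrained simultaneously by the continuity equation and by the fixed pressure law $P(z)=Az^\gamma$ and is therefore entangled with the sound-speed normalization $\varepsilon'=\sqrt{f(\overline c)}\,\varepsilon$ introduced in \eqref{epsilon'}, and then verify that every one of the many weighted norms receives exactly the power of $\varepsilon$ predicted by lemma \ref{dilatation}. A secondary, harmless point is the reconciliation of the two slightly different low/medium splittings: because $\varepsilon'\varepsilon^{-1}$ is a fixed constant of order one, the thresholds $1$ and $\varepsilon'\varepsilon^{-1}$ are separated by a bounded number of dyadic blocks on which all the $\dot B_{2,1}^s$ norms are equivalent, so no information is lost in passing from the formulation of Theorem \ref{théorème Euler-Poisson} to the sharp one used here. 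No new estimate is needed.
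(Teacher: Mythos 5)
Your route --- transporting Theorem \ref{théorème d'existence sur le système après changement de variable} back through the Makino substitution and the dilation \eqref{changement de variable 2}, via Lemmas \ref{Produit espace de Besov}, \ref{fonction régulière besov} and \ref{dilatation} --- is indeed the one the paper intends (the paper offers nothing beyond the sentence that the present theorem is ``deduced from'' Theorem \ref{théorème Euler-Poisson}, itself obtained by exactly this rescaling), and your account of how the cut-offs move ($\varepsilon'\mapsto\varepsilon'\varepsilon^{-1}$, $1\mapsto\varepsilon^{-1}$) is correct. The gap is in the step you declare to be the only real work: the assertion that, after the substitutions, every summand of $\mathcal{Z}_0^\varepsilon$ and of $\mathcal{Z}^\varepsilon(t)$ is a fixed constant times the corresponding summand of the quantities controlled by the symmetrized theorem. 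If one actually performs this bookkeeping, it fails, and it cannot be repaired by any other admissible choice of the joint time/space/velocity rescaling, because the statement as printed is not scaling-equivalent to Theorem \ref{théorème Euler-Poisson}: its hypothesis is \emph{weaker} by one power of $\varepsilon$ (every velocity term carries an extra prefactor $\varepsilon$), while part of its conclusion is \emph{stronger} by one power of $\varepsilon^{-1}$ (the $L^1$-in-time density terms and the term $\varepsilon^{-2}\|w^\varepsilon\|_{L^1(\dot B_{2,1}^{d/2})}$). No transport argument can produce a strictly stronger conclusion from a strictly weaker hypothesis.

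Concretely: writing $(\varrho^\varepsilon,v^\varepsilon)(t,x)=(\varrho,v)(\varepsilon^{-1}t,\varepsilon^{-1}x)$ with $(\varrho,v)$ solving \eqref{système 2}, one checks that $w^\varepsilon(t,x)=\omega(\varepsilon^{-1}t,\varepsilon^{-1}x)$ exactly, with no amplitude factor, where $\omega:=\frac{\nabla(P(\varrho))}{\varrho}+v+\varepsilon^2\nabla(-\Delta)^{-1}(\varrho-\overline\varrho)$ is the damped mode of the rescaled system; since the dilation factor $\varepsilon^{d/2-s}$ equals $1$ at $s=d/2$, time integration gives $\|w^\varepsilon\|_{L^1(\dot B_{2,1}^{d/2})}\simeq\varepsilon\,\|\omega\|_{L^1(\dot B_{2,1}^{d/2})}$, so the symmetrized control of $\|W\|_{L^1(\dot B_{2,1}^{d/2})}$ transports to a bound on $\varepsilon^{-1}\|w^\varepsilon\|_{L^1(\dot B_{2,1}^{d/2})}$ --- the weight appearing in Theorem \ref{théorème Euler-Poisson} --- and not on $\varepsilon^{-2}\|w^\varepsilon\|_{L^1(\dot B_{2,1}^{d/2})}$. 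The same computation shows the transported $L^1$ density bounds carry a prefactor $\varepsilon$, again as in Theorem \ref{théorème Euler-Poisson}; the unweighted bound $\|\varrho^\varepsilon-\overline\varrho\|^{l^-,\,\varepsilon'\varepsilon^{-1}}_{L^1(\dot B_{2,1}^{d/2-1})}\le C\mathcal{Z}_0^\varepsilon$ claimed in the statement is even false at the linear level, because the very low frequencies of the density decay like $e^{-\varepsilon(1+|\xi|^2)t}$, so their $L^1$ norm in the original time variable is of order $\varepsilon^{-1}$ times the data. Conversely, your explanation of the velocity weights is reversed: measuring the velocity through $\tilde v^\varepsilon=\varepsilon^{-1}v^\varepsilon$ with the weight $\varepsilon$ produces $\varepsilon\|\tilde v^\varepsilon\|=\|v^\varepsilon\|$, i.e.\ \emph{unweighted} norms of $v^\varepsilon$, not the terms $\varepsilon\|v^\varepsilon\|^{l,\,\varepsilon^{-1}}$ and $\varepsilon\|(\cdot,\varepsilon v^\varepsilon)\|^{h,\,\varepsilon^{-1}}$ of the statement; and under the hypothesis actually printed, which allows $\|v_0^\varepsilon\|_{\dot B_{2,1}^{d/2}}\sim\alpha\varepsilon^{-1}$, the smallness condition \eqref{hypothèse de petitesse} needed to invoke the symmetrized theorem fails, so your first step cannot even be launched for all admissible data. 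What your argument actually proves is Theorem \ref{théorème Euler-Poisson} (equivalently, the bounds \eqref{espace fonctionnel final} after the diffusive change of variables, with which the present statement is likewise inconsistent). To prove the statement as written one must either correct its powers of $\varepsilon$ --- the reading consistent with the rest of the paper --- or supply genuinely new a priori estimates; the blanket claim of exact term-by-term agreement conceals precisely this discrepancy.
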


We notice with theorem \ref{dernier Euler-Poisson} ensures that $\displaystyle \tilde{W}^\varepsilon= \mathcal{O}(\varepsilon)$ in $\displaystyle L^1(\R_+; \dot B_{2,1}^{\frac{d}{2}})$.
As the first equation of \eqref{système avant passage à la limite} can be rewritten as $$\partial_t \tilde{\varrho}^\varepsilon-\Delta\left(P(\tilde{\varrho}^\varepsilon)\right)-\dive\left(\tilde{\varrho}^\varepsilon \nabla (-\Delta)^{-1}(\tilde{\varrho}^\varepsilon-\overline{\varrho})\right)=\dive(\tilde{\varrho}^\varepsilon \tilde{W}^\varepsilon),$$ we suspect that the density will tend to satisfy the  parabolic-elliptic Keller-Segel system \eqref{Keller-Segel} supplemented with the initial data $\displaystyle \underset{\varepsilon\to 0}{\lim}\,\tilde{\varrho}_0^\varepsilon$.

Let us now rigorously prove the theorem \ref{theoreme final} :

\begin{proof}
Let us justify quickly that for all $N_0$ satisfying \eqref{condition initiale keller-segel}, there exists a unique global-in-time solution ${N}$ of \eqref{Keller-Segel} in $\mathcal{C}_b\left(\R_+;\dot{B}_{2,1}^{\frac{d}{2}-1}\cap\dot B_{2,1}^\frac{d}{2}\right)\cap L^1\left(\R_+;\dot{B}_{2,1}^{\frac{d}{2}+2}\cap\dot B_{2,1}^{\frac{d}{2}-1}\right)$ satisfying \eqref{estimation keller-segel}.

In terms of  $\tilde{N}:=N-\overline{\varrho},$  the equation \eqref{Keller-Segel} is rewritten : $$\partial_t \tilde{N}-\Delta(P(N))-\dive\left(N \nabla (-\Delta)^{-1}\tilde{N}\right)=0.$$  

By the Taylor-Lagrange formula, we notice that : $$P(N)-P(\overline{\varrho})=\tilde{N}P'(\overline{\varrho})+g(\tilde{N})$$ where $g$ is a smooth function vanishing at $0$ (and also its first derivative). 

\begin{enumerate}
    \item \underline{Low frequency analysis}
    ~\\
    We can rewrite this system as : $$\partial_t \tilde{N}+\overline{\varrho}\tilde{N}=
    \Delta\bigl(\tilde{N}P'(\overline{\varrho})+g(\tilde{N})\bigr)+\dive\left(\tilde{N}\nabla(-\Delta)^{-1} \tilde{N}\right)\cdotp$$
    
    We then obtain the following estimates: 
    $$\|\tilde{N}(t)\|_{\dot B_{2,1}^{\frac{d}{2}-1}}^l+\int_0^t \|\tilde{N}\|_{\dot B_{2,1}^{\frac{d}{2}-1}}^l d\tau \lesssim \|\tilde{N}_0\|_{\dot B_{2,1}^{\frac{d}{2}-1}}^l+\int_0^t \|g(\tilde{N})\|_{\dot B_{2,1}^{\frac{d}{2}+1}}d\tau+\int_0^t \|\tilde{N}\|_{\dot B_{2,1}^{\frac{d}{2}+1}}^l d\tau +\int_0^t \|\tilde{N}\|_{\dot B_{2,1}^{\frac{d}{2}}}\|\tilde{N}\|_{\dot B_{2,1}^{\frac{d}{2}-1}}d\tau,$$ 
    We note that the term $\displaystyle \int_0^t \|\tilde{N}\|_{\dot B_{2,1}^{\frac{d}{2}+1}}^l d\tau$ is negligible compared to $\displaystyle \int_0^t \|\tilde{N}\|_{\dot B_{2,1}^{\frac{d}{2}-1}}^l d\tau.$

    We deduce that : 
    $$\|\tilde{N}(t)\|_{\dot B_{2,1}^{\frac{d}{2}-1}}^l+\int_0^t \|\tilde{N}\|_{\dot B_{2,1}^{\frac{d}{2}-1}}^l d\tau \lesssim \|\tilde{N}_0\|_{\dot B_{2,1}^{\frac{d}{2}-1}}^l+ \int_0^t \|\tilde{N}\|_{\dot B_{2,1}^{\frac{d}{2}-1}\cap\dot B_{2,1}^{\frac{d}{2}}} \|\tilde{N}\|_{\dot B_{2,1}^{\frac{d}{2}+2}\cap \dot B_{2,1}^{\frac{d}{2}-1}} d\tau. $$
    \item \underline{High frequency analysis}
    ~\\
    We can rewrite the system as : $$\partial_t \tilde{N}-P'(\overline{\varrho})\Delta \tilde{N}=\Delta(g(\tilde{N}))-\dive(N\nabla (-\Delta)^{-1} \tilde{N}).$$
    
    Then, we get : 
    $$\displaylines{\|\tilde{N}(t)\|_{\dot B_{2,1}^{\frac{d}{2}}}^h+\int_0^t \|\tilde{N}\|_{\dot B_{2,1}^{\frac{d}{2}+2}}^hd\tau \lesssim \|\tilde{N}_0\|_{\dot B_{2,1}^{\frac{d}{2}}}^h+\int_0^t \|\tilde{N}\|_{\dot B_{2,1}^{\frac{d}{2}+2}}\|\tilde{N}\|_{\dot B_{2,1}^{\frac{d}{2}}}d\tau+\int_0^t \|\tilde{N}\|_{\dot B_{2,1}^{\frac{d}{2}}}^h d\tau \hfill\cr\hfill+\int_0^t \left( \|\tilde{N}\|_{\dot B_{2,1}^{\frac{d}{2}+1}}\|\tilde{N}\|_{\dot B_{2,1}^{\frac{d}{2}-1}}+ \|\tilde{N}\|_{\dot B_{2,1}^{\frac{d}{2}}}^2\right)d\tau.}$$

    Thus, we have : $$\|\tilde{N}(t)\|_{\dot B_{2,1}^{\frac{d}{2}}}^h+\int_0^t \|\tilde{N}\|_{\dot B_{2,1}^{\frac{d}{2}+2}}^h d\tau \lesssim \|\tilde{N}_0\|_{\dot B_{2,1}^{\frac{d}{2}}}^h+ \int_0^t \left(\|\tilde{N}\|_{\dot B_{2,1}^{\frac{d}{2}+2}}\|\tilde{N}\|_{\dot B_{2,1}^{\frac{d}{2}}}+\|\tilde{N}\|_{\dot B_{2,1}^{\frac{d}{2}-1}} \|\tilde{N}\|_{\dot B_{2,1}^{\frac{d}{2}+1}}\right) d\tau. $$
    \item \underline{A priori estimate}
    ~\\
    By gathering the previous information, we get:    
$$\|\tilde{N}(t)\|_{\dot B_{2,1}^{\frac{d}{2}-1}\cap \dot B_{2,1}^{\frac{d}{2}}}+\int_0^t \|\tilde{N}\|_{\dot B_{2,1}^{\frac{d}{2}+2}\cap \dot B_{2,1}^{\frac{d}{2}-1}} d\tau \lesssim \|\tilde{N}_0\|_{\dot B_{2,1}^{\frac{d}{2}-1}\cap \dot B_{2,1}^{\frac{d}{2}}}+ \int_0^t \|\tilde{N}\|_{\dot B_{2,1}^{\frac{d}{2}-1}\cap \dot B_{2,1}^{\frac{d}{2}}} \|\tilde{N}\|_{\dot B_{2,1}^{\frac{d}{2}+2}\cap \dot B_{2,1}^{\frac{d}{2}-1}} d\tau. $$
    Then, we have: $$\|\tilde{N}(t)\|_{\dot B_{2,1}^{\frac{d}{2}-1}\cap \dot B_{2,1}^{\frac{d}{2}}}+\int_0^t \|\tilde{N}\|_{\dot B_{2,1}^{\frac{d}{2}+2}\cap \dot B_{2,1}^{\frac{d}{2}-1}} d\tau \lesssim \|\tilde{N}_0\|_{\dot B_{2,1}^{\frac{d}{2}-1}\cap \dot B_{2,1}^{\frac{d}{2}}}.$$
    Hence \eqref{estimation keller-segel}.
    \\
    By taking advantage of the Picard fixed point theorem in the functional framework given by the inequalities above, we obtain an unique global-in-time solution $\tilde{N}$ of \eqref{Keller-Segel} in $\mathcal{C}_b\Bigl(\R_+;\dot{B}_{2,1}^{\frac{d}{2}-1}\cap\dot B_{2,1}^\frac{d}{2}\Bigr)\cap L^1\Bigl(\R_+;\dot{B}_{2,1}^{\frac{d}{2}+2}\cap\dot B_{2,1}^{\frac{d}{2}-1}\Bigr)$ satisfying \eqref{estimation keller-segel}.
\end{enumerate}

In order  to prove the last part of this theorem, let us observe that $(N,\tilde{\varrho}^\varepsilon)$ satisfies : $$\left\{\begin{array}{l} \partial_t \tilde{\varrho}^\varepsilon-\Delta(P(\tilde{\varrho}^\varepsilon))-\dive\left(\tilde{\varrho}^\varepsilon \nabla(-\Delta)^{-1} (\tilde{\varrho}^\varepsilon-\overline{\varrho}\right)=\dive\left(\tilde{\varrho}^\varepsilon \tilde{W}^\varepsilon\right),
\\
\partial_t N-\Delta(P(N))-\dive\left(N\nabla (-\Delta)^{-1} (N-\overline{\varrho})\right)=0 .    
\end{array} \right.$$

Let us denote $\delta N=N-\tilde{\varrho}^\varepsilon$. We obtain : $$\displaylines{\partial_t \delta N+\Delta \left(P(\tilde{\varrho}^\varepsilon\right)-\Delta\left(P(N)\right)+\overline{\varrho} \ \delta N-\dive\left(\delta N \nabla (-\Delta)^{-1} (N-\overline{\varrho})\right)\hfill\cr\hfill-\dive\left((\tilde{\varrho}^\varepsilon-\overline{\varrho})\nabla (-\Delta)^{-1} \delta N\right) =-\dive\left(\tilde{\varrho}^\varepsilon \tilde{W}^\varepsilon\right)\cdotp}$$

Let us look estimate $\delta N$ at the level of regularity $\dot B_{2,1}^{\frac{d}{2}-1}$. We have : 
$$\int_0^t \|\dive(\tilde{\varrho}^\varepsilon \tilde{W}^\varepsilon)\|_{\dot B_{2,1}^{\frac{d}{2}-1}}d\tau   \lesssim \int_0^t\|\tilde{\varrho}^\varepsilon \tilde{W}^\varepsilon\|_{\dot B_{2,1}^{\frac{d}{2}}}d\tau \lesssim \|\tilde{\varrho}^\varepsilon\|_{L_t^\infty\left(\dot B_{2,1}^{\frac{d}{2}}\right)} \int_0^t \|\tilde{W}^\varepsilon\|_{\dot B_{2,1}^{\frac{d}{2}}}d\tau \lesssim \alpha \varepsilon,$$
$$\begin{aligned}
\int_0^t \|\dive\left(\delta N \nabla (-\Delta)^{-1}(N-\overline{\varrho}\right)\|_{\dot B_{2,1}^{\frac{d}{2}-1}}d\tau & \lesssim  \int_0^t \|\delta N \nabla (-\Delta)^{-1}(N-\overline{\varrho})\|_{\dot B_{2,1}^{\frac{d}{2}}}d\tau \cr & \lesssim \int_0^t \|\delta N\|_{\dot B_{2,1}^{\frac{d}{2}}} \|(N-\overline{\varrho})\|_{\dot B_{2,1}^{\frac{d}{2}-1}}d\tau \cr &\lesssim \alpha \int_0^t \|\delta N\|_{\dot B_{2,1}^{\frac{d}{2}}} d\tau,\end{aligned}$$ 
$$\begin{aligned}
 \int_0^t \|\dive\left((\tilde{\varrho}^\varepsilon-\overline{\varrho}) \nabla(-\Delta)^{-1}\delta N\right)\|_{\dot B_{2,1}^{\frac{d}{2}-1}}d\tau & \lesssim \int_0^t \|(\tilde{\varrho}^\varepsilon-\overline{\varrho}) \nabla(-\Delta)^{-1}\delta N\|_{\dot B_{2,1}^{\frac{d}{2}}}d\tau \cr & \lesssim \|\tilde{\varrho}^\varepsilon-\overline{\varrho}\|_{L^\infty(\dot B_{2,1}^{\frac{d}{2}})}\int_0^t \|\delta N\|_{\dot B_{2,1}^{\frac{d}{2}-1}}d\tau \cr & \lesssim \alpha \int_0^t \|\delta N\|_{\dot B_{2,1}^{\frac{d}{2}-1}}d\tau.
 \end{aligned}$$

In order to study $\displaystyle \Delta\left(P(\tilde{\varrho}^\varepsilon)-P(N)\right)$, we use the identity : $$\Delta \left(P(\phi)\right)= \Delta \phi P'(\phi)+|\nabla \phi|^2 P''(\phi).$$

Hence, we have : \begin{multline*}
\Delta\left(P(\tilde{\varrho}^\varepsilon)-P(N)\right)=P'(\overline{\varrho}) \Delta (\delta N)+(P'(N)-P'(\varrho))\Delta(\delta N)+ \Delta \tilde{\varrho}^\varepsilon\left(P'(N)-P'(\tilde{\varrho}^\varepsilon)\right)
\\
+\left(|\nabla N|^2-|\nabla \tilde{\varrho}^\varepsilon|^2\right)P''(N)+|\nabla \tilde{\varrho}^\varepsilon|^2\left(P''(N)-P''(\tilde{\varrho}^\varepsilon)\right)\cdotp 
\end{multline*}
Let bound the r.h.s in $L^1(\R_+;\dot B_{2,1}^{\frac{d}{2}-1})$ (in particular, the estimate of Theorem \ref{dernier Euler-Poisson} will be used but also the different lemmas in the appendix) :
$$
    \int_0^t\|\Delta(\delta N)\left(P'(N)-P'(\overline{\varrho})\right)\|_{\dot B_{2,1}^{\frac{d}{2}-1}}d\tau\lesssim \int_0^t\|\delta N\|_{\dot B_{2,1}^{\frac{d}{2}+1}}\|N-\overline{\varrho}\|_{\dot B_{2,1}^{\frac{d}{2}}}d\tau\lesssim \alpha \int_0^t\|\delta N\|_{\dot B_{2,1}^{\frac{d}{2}+1}}d\tau,$$
    $$\begin{aligned}
  \int_0^t\|\Delta \tilde{\varrho}^\varepsilon\left(P'(N)-P'(\tilde{\varrho}^\varepsilon)\right)\|_{\dot B_{2,1}^{\frac{d}{2}-1}}d\tau & \lesssim \int_0^t\|\tilde{\varrho}^\varepsilon-\overline{\varrho}\|_{\dot B_{2,1}^{\frac{d}{2}+1}} \|\delta N\|_{\dot B_{2,1}^{\frac{d}{2}}} \|(N,\tilde{\varrho}^\varepsilon)\|_{\dot B_{2,1}^{\frac{d}{2}}}d\tau\cr & \lesssim (\alpha^2+\alpha) \|\delta N\|_{L^\infty(\dot B_{2,1}^{\frac{d}{2}})},\end{aligned}$$
  $$\begin{aligned}
    \int_0^t\| |\nabla \tilde{\varrho}^\varepsilon|^2 \left(P''(N)-P''(\overline{\varrho})\right)\|_{\dot B_{2,1}^{\frac{d}{2}-1}}d\tau & \lesssim \displaystyle\int_0^t \|\tilde{\varrho}^\varepsilon\|_{\dot B_{2,1}^{\frac{d}{2}+1}} \|\tilde{\varrho}^\varepsilon\|_{\dot B_{2,1}^{\frac{d}{2}}}\|\delta N\|_{\dot B_{2,1}^{\frac{d}{2}}}\|(\tilde{N},\tilde{\varrho}^\varepsilon)\|_{\dot B_{2,1}^{\frac{d}{2}}}d\tau \cr &  \lesssim (\alpha^3+\alpha^2)  \int_0^t\|\delta N\|_{\dot B_{2,1}^{\frac{d}{2}}}d\tau,\end{aligned}$$
    $$\begin{aligned}
    \int_0^t \|\left(|\nabla N|^2-|\nabla \tilde{\varrho}^\varepsilon|^2\right)P''(N)\|_{\dot B_{2,1}^{\frac{d}{2}-1}}d\tau & \lesssim \displaystyle\int_0^t \|\nabla N-\nabla \tilde{\varrho}^\varepsilon\|_{\dot B_{2,1}^{\frac{d}{2}}}\|(\nabla N,\nabla \tilde{\varrho}^\varepsilon)\|_{\dot B_{2,1}^{\frac{d}{2}-1}}\|N\|_{\dot B_{2,1}^{\frac{d}{2}}}d\tau \cr & \lesssim (\alpha^2+\alpha) \int_0^t \|\delta N\|_{\dot B_{2,1}^{\frac{d}{2}+1}} d\tau.
\end{aligned}$$

In particular, $\delta N$ satisfies : $$\displaylines{\partial_t \delta N-P'(\overline{\varrho})\Delta \delta N+ \overline{\varrho}\delta N= -\dive\left(\tilde{\varrho}^\varepsilon\tilde{W}^\varepsilon\right)+\dive\left(\delta N \nabla (-\Delta)^{-1}(N-\overline{\varrho}\right)+\dive\left((\tilde{\varrho}^\varepsilon-\overline{\varrho}) \nabla(-\Delta)^{-1}\delta N\right) \hfill\cr\hfill-\Delta(\delta N)(P'(N)-P'(\overline{\varrho}))-\Delta \tilde{\varrho}^\varepsilon\left(P'(N)-P'(\tilde{\varrho}^\varepsilon)\right)\hfill\cr\hfill-|\nabla \tilde{\varrho}^\varepsilon|^2 \left(P''(N)-P''(\overline{\varrho})\right)-\left(|\nabla N|^2-|\nabla \tilde{\varrho}^\varepsilon|^2\right)P''(N).}$$ 

By using all previous inequalities, we get : 
$$\displaylines{\|\delta N\|_{L_t^\infty(R_+;\dot B_{2,1}^{\frac{d}{2}-1})\cap L^1(\R_+;\dot B_{2,1}^{\frac{d}{2}-1})\cap L^1(\R_+;\dot B_{2,1}^{\frac{d}{2}+1})}\lesssim \|\delta N(0)\|_{\dot B_{2,1}^{\frac{d}{2}-2}}+\alpha \varepsilon \hfill\cr\hfill +(\alpha+\alpha^2+\alpha^3)\|\delta N\|_{L_t^\infty(R_+;\dot B_{2,1}^{\frac{d}{2}-1})\cap L^1(\R_+;\dot B_{2,1}^{\frac{d}{2}-1})\cap L^1(\R_+;\dot B_{2,1}^{\frac{d}{2}+1})}.}$$

Therefore, for $\alpha$ small enough, we get : $$\|\delta N\|_{L^\infty(R_+;\dot B_{2,1}^{\frac{d}{2}-1})\cap L^1(\R_+;\dot B_{2,1}^{\frac{d}{2}-1}\cap\dot B_{2,1}^{\frac{d}{2}+1})}\lesssim \|\delta N(0)\|_{\dot B_{2,1}^{\frac{d}{2}-1}}+\alpha \varepsilon$$ which completes the proof of the theorem.
\end{proof}

\appendix
\section{}
Here we recall classic lemmas involving  differential inequalities and some basic properties on Besov spaces and product estimates have been be used repeatedly in the article.

\begin{lemma}\label{lemme edo}
Let $\displaystyle X:[0,T]\rightarrow \R_+$ be a continuous function such that $X^2$ is differentiable. Assume that there exist a constant $c\geq 0$ and a measurable function $A:[0,T]\rightarrow \R_+$ such that $$\frac{1}{2}\frac{d}{dt}X^2+c X^2\leq A X \quad  \text{a.e. on} \ [0,T].$$
Then, for all $t\in [0,T]$, we have: $$X(t)+c\int_0^t X(\tau)\,d\tau\leq X_0 +\int_0^t A(\tau) \,d\tau.$$
\end{lemma}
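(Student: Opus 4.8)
The plan is to reduce the statement to an ordinary first-order differential inequality, the only real difficulty being that $X$ may vanish, so one cannot simply divide the hypothesis $\frac12\frac{d}{dt}X^2+cX^2\le AX$ by $X$ and integrate: at a zero of $X$ the function $t\mapsto X(t)$ need not even be differentiable. To get around this I would regularize. For each $\delta>0$ set $X_\delta:=\sqrt{X^2+\delta^2}$, which satisfies $X_\delta\ge\delta>0$ and, since $X^2$ is differentiable by hypothesis, is itself differentiable with the chain-rule identity $X_\delta\,\frac{d}{dt}X_\delta=\frac12\frac{d}{dt}X^2$.

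First I would rewrite the assumption in terms of $X_\delta$. Using the identity above, together with $0\le X\le X_\delta$ and $A\ge0$ (so that $AX\le AX_\delta$), the hypothesis yields, almost everywhere on $[0,T]$,
$$X_\delta\,\frac{d}{dt}X_\delta+cX^2\le AX_\delta.$$
Dividing by $X_\delta\ge\delta>0$, now a legitimate operation, gives
$$\frac{d}{dt}X_\delta+c\,\frac{X^2}{X_\delta}\le A\qquad\text{a.e. on }[0,T].$$

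Next I would integrate in time from $0$ to $t$. Since $X_\delta$ is differentiable with derivative controlled by $A$ (hence integrable on compact subintervals), it is absolutely continuous, and the fundamental theorem of calculus applies, producing
$$X_\delta(t)+c\int_0^t\frac{X^2}{X_\delta}\,d\tau\le X_\delta(0)+\int_0^t A(\tau)\,d\tau.$$

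Finally I would let $\delta\to0$. Pointwise one has $X_\delta(t)\to X(t)$ and $X_\delta(0)\to X_0$; and for the dissipation term, $\frac{X^2}{X_\delta}=\frac{X^2}{\sqrt{X^2+\delta^2}}$ increases to $X$ as $\delta$ decreases to $0$ (at the zeros of $X$ the quotient is $0=X$), so the monotone convergence theorem gives $\int_0^t\frac{X^2}{X_\delta}\,d\tau\to\int_0^t X\,d\tau$. Passing to the limit yields exactly $X(t)+c\int_0^t X\,d\tau\le X_0+\int_0^t A\,d\tau$. The main obstacle, as noted, is precisely the loss of differentiability of $X$ at its zeros, which the $\delta$-regularization resolves cleanly; every remaining step is routine.
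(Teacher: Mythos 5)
Your proof is correct and is essentially the standard argument for this classical lemma, which the paper states without proof as a recalled result (it is the proof found in \cite{RD}): the regularization $X_\delta:=\sqrt{X^2+\delta^2}$ to avoid dividing by a vanishing $X$, followed by integration of the resulting differential inequality and the limit $\delta\to 0$ (by monotone convergence for the dissipative term), is exactly the textbook route. One caveat on your justification of the integration step: the a.e.\ bound $\frac{d}{dt}X_\delta\le A$ is only \emph{one-sided}, so it does not by itself make $\frac{d}{dt}X_\delta$ integrable, nor $X_\delta$ absolutely continuous (a function bounded above by an integrable function can still fail to be integrable); the step is nevertheless valid --- immediately so if one reads the hypothesis as $X^2\in C^1$, which is how the lemma is used in the paper, and in full generality because an everywhere-differentiable function whose derivative is a.e.\ dominated above by an $L^1$ function satisfies the corresponding integrated inequality (a one-sided variant of the fundamental theorem of calculus, proved by a Vitali--Carath\'eodory argument) --- but your parenthetical ``controlled by $A$, hence integrable'' is not by itself a proof of it.
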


This classical lemma can be found for instance in \cite{RD} :
\begin{lemma}\label{lemme edo2}
    Let $T>0$. Let $\mathcal{L}:[0,T]\to \R$ and $H:[0,T]\to \R$ two continous positive functions on $[0,T]$ such that $$\mathcal{L}(t)+c\int_0^t \mathcal{H}(\tau)d\tau \leq \mathcal{L}(0)+C\int_0^t \mathcal{L}(\tau)\mathcal{H}(\tau)d\tau,$$ and $\mathcal{L}(0)\leq \alpha<<1$ then for all $t\in[0,T]$, we have : $$\mathcal{L}(t)+\frac{c}{2}\int_0^t \mathcal{H}(\tau)d\tau \leq \mathcal{L}(0).$$
\end{lemma}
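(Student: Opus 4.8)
The plan is to run a standard continuation (\emph{bootstrap}) argument, exploiting the smallness of $\mathcal{L}(0)$ to absorb the quadratic term $C\int_0^t \mathcal{L}\mathcal{H}\,d\tau$ into the linear dissipation $c\int_0^t \mathcal{H}\,d\tau$. The point is that as long as $\mathcal{L}$ stays below a threshold proportional to $c/C$, the factor $C\mathcal{L}(\tau)$ multiplying $\mathcal{H}(\tau)$ is at most $c/2$, so half of the dissipation suffices to control the nonlinear contribution; the resulting bound then keeps $\mathcal{L}$ \emph{strictly} under that threshold, which is exactly what makes the continuation close.

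Concretely, I would first fix the smallness. Since $\mathcal{L}(0)\le\alpha$ with $\alpha$ free to be chosen, I take $\alpha$ so small that $C\alpha\le c/4$ (assuming $c>0$, the case $c=0$ being trivial). Then I set
$$T^*:=\sup\Bigl\{t\in[0,T]\ :\ C\,\mathcal{L}(s)\le \tfrac{c}{2}\ \text{ for all } s\in[0,t]\Bigr\}.$$
By continuity of $\mathcal{L}$ and the bound $C\mathcal{L}(0)\le C\alpha\le c/4<c/2$, this set contains a neighbourhood of $0$, so $T^*>0$ and $C\mathcal{L}(s)\le c/2$ holds on the closed interval $[0,T^*]$. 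On that interval the hypothesis gives
$$C\int_0^t \mathcal{L}(\tau)\mathcal{H}(\tau)\,d\tau\le \frac{c}{2}\int_0^t \mathcal{H}(\tau)\,d\tau,$$
whence, after substitution and rearrangement,
$$\mathcal{L}(t)+\frac{c}{2}\int_0^t \mathcal{H}(\tau)\,d\tau\le \mathcal{L}(0)\le\alpha \qquad \text{for all } t\in[0,T^*].$$

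Finally I would close the bootstrap. The last display yields $\mathcal{L}(t)\le\alpha$ on $[0,T^*]$, hence $C\mathcal{L}(t)\le C\alpha\le c/4$, which is \emph{strictly} below the threshold $c/2$ defining $T^*$. If one had $T^*<T$, continuity of $\mathcal{L}$ would let the inequality $C\mathcal{L}\le c/2$ persist on a slightly larger interval, contradicting the maximality of $T^*$; therefore $T^*=T$ and the estimate holds on all of $[0,T]$. The only point requiring care is this logical structure of the continuation: one must check that the bound obtained \emph{on} $[0,T^*]$ is strictly stronger than the bound \emph{defining} $T^*$, so that the usual open/closed dichotomy forces $T^*=T$. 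There is no genuine analytic difficulty here—the result is a soft consequence of continuity together with the absorption of the nonlinear term, which is why it is quoted as a classical lemma.
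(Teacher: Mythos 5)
Your proof is correct and follows essentially the same continuation (bootstrap) argument as the paper: define the maximal time up to which a smallness threshold holds, absorb the quadratic term $C\int_0^t\mathcal{L}\mathcal{H}\,d\tau$ into half of the dissipation on that interval, and close by continuity. If anything, your two-threshold variant (bootstrap hypothesis $C\mathcal{L}\le c/2$, recovered bound $C\mathcal{L}\le C\alpha\le c/4$) treats the strict-inequality point needed to close the open/closed argument more carefully than the paper, which uses the single threshold $\alpha\in\,]0,\tfrac{c}{2C}[$ both to define $T_0$ and as the bound recovered on $[0,T_0]$.
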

\begin{proof}
    Let $\displaystyle\alpha\in ]0,\frac{c}{2C}[$. We set $T_0=\underset{T_1\in[0,T]}{\sup}\left\{\underset{t\in[0,T_1]}{\sup}\mathcal{L}(t)\leq \alpha \right\}$. This $\sup$ exists because the previous set is non empty ($0$ belongs to this set) and since $\mathcal{L}$ is continuous,  $T_0>0$. In time $t=T_0$, we get : $$\displaylines{\mathcal{L}(T_0)+c\int_0^{T_0}\mathcal{H}(\tau)d\tau \leq \mathcal{L}(0)+C\int_0^{T_0}\mathcal{H}(\tau)\mathcal{L}(\tau)d\tau\leq \mathcal{L}(0)+\alpha C\int_0^{T_0}\mathcal{H}(\tau)d\tau. 
    }$$

    Hence we have : $$\mathcal{L}(T_0)+\frac{c}{2}\int_0^{T_0} \mathcal{H}(\tau)d\tau \leq \mathcal{L}(0).$$

    As $\mathcal{L}(t)\leq \mathcal{L}(T_0)$ for all $t\in[0,T_0]$ and $\displaystyle \int_0^t \mathcal{H}(\tau) d\tau \leq \int_0^{T_0} \mathcal{H}(\tau) d\tau$, we obtain with the previous inequality : $$\mathcal{L}(t)\leq \alpha \quad \forall t\in[0,T_0].$$

    With the continuity of $\mathcal{L}$, we must have $T_0=T$, whence the result. 
\end{proof}

The following lemmas are classic results on Besov spaces ( see e.g. \cite{BCD}).
\begin{lemma} \label{Produit espace de Besov}
For all $d\geq 2$, the pointwise product extends in a continuous application of $\dot B_{2,1}^{\frac{d}{2}-1}(\R^d)\times \dot B_{2,1}^{\frac{d}{2}}(\R^d)$ to $\dot B_{2,1}^{\frac{d}{2}-1}(\R^d)$ and $ \dot B_{2,1}^{\frac{d}{2}}$ is a multiplicative algebra for all $d\geq 1$.

For all $d\geq 1$, we have for $(u,v)\in \dot B_{2,1}^{\frac{d}{2}}\cap  \dot B_{2,1}^{\frac{d}{2}+1}$ that $uv\in  \dot B_{2,1}^{\frac{d}{2}+1}$ and the following inequality : $$\|uv\|_{ \dot B_{2,1}^{\frac{d}{2}+1}}\lesssim \|u\|_{ \dot B_{2,1}^{\frac{d}{2}}}\|v\|_{ \dot B_{2,1}^{\frac{d}{2}+1}}+\|u\|_{ \dot B_{2,1}^{\frac{d}{2}+1}}\|v\|_{ \dot B_{2,1}^{\frac{d}{2}}}.$$
\end{lemma}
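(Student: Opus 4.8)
The plan is to reduce all three assertions to Bony's paraproduct decomposition
$$uv = T_u v + T_v u + R(u,v), \qquad T_u v := \sum_{j}\dot S_{j-1}u\,\dot\Delta_j v, \quad R(u,v) := \sum_{|j-j'|\le 1}\dot\Delta_j u\,\dot\Delta_{j'}v,$$
combined with the standard continuity properties of these operators and the homogeneous Besov embeddings recorded in \cite{BCD}. The three facts I would invoke are: (i) the paraproduct bound $\|T_u v\|_{\dot B_{2,1}^s}\lesssim \|u\|_{L^\infty}\|v\|_{\dot B_{2,1}^s}$, valid for every $s\in\R$; (ii) its gain version $\|T_u v\|_{\dot B_{2,1}^{s+\sigma}}\lesssim \|u\|_{\dot B_{\infty,\infty}^\sigma}\|v\|_{\dot B_{2,1}^s}$ for $\sigma<0$; and (iii) the remainder estimate $\|R(u,v)\|_{\dot B_{1,1}^{s_1+s_2}}\lesssim \|u\|_{\dot B_{2,1}^{s_1}}\|v\|_{\dot B_{2,1}^{s_2}}$ whenever $s_1+s_2>0$. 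I would also use repeatedly the embeddings $\dot B_{2,1}^{d/2}\hookrightarrow L^\infty$, $\dot B_{2,1}^{d/2-1}\hookrightarrow \dot B_{\infty,\infty}^{-1}$ and, crucially, $\dot B_{1,1}^{s}\hookrightarrow \dot B_{2,1}^{s-d/2}$, the last one converting the $L^1$-based output of the remainder back to the $L^2$-based scale at the cost of $d/2$ derivatives.

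For the first assertion, with $u\in\dot B_{2,1}^{d/2-1}$ and $v\in\dot B_{2,1}^{d/2}$, I would estimate the three pieces separately. The term $T_v u$ is controlled by (i) since $v\in\dot B_{2,1}^{d/2}\hookrightarrow L^\infty$, giving $\|T_v u\|_{\dot B_{2,1}^{d/2-1}}\lesssim \|v\|_{\dot B_{2,1}^{d/2}}\|u\|_{\dot B_{2,1}^{d/2-1}}$. The term $T_u v$ is handled by (ii) with $\sigma=-1$, through $u\in\dot B_{2,1}^{d/2-1}\hookrightarrow\dot B_{\infty,\infty}^{-1}$. The remainder is where the dimensional restriction enters: (iii) applies because $s_1+s_2=(d/2-1)+d/2=d-1>0$ precisely when $d\ge2$, and then $\dot B_{1,1}^{d-1}\hookrightarrow\dot B_{2,1}^{d/2-1}$ closes the estimate.

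The algebra property and the tame estimate follow the same template. For $\dot B_{2,1}^{d/2}$, both paraproducts are bounded by (i) via $\dot B_{2,1}^{d/2}\hookrightarrow L^\infty$, while the remainder needs $s_1+s_2=d>0$, which holds for all $d\ge1$ (this is exactly why the algebra claim carries the weaker hypothesis), followed by $\dot B_{1,1}^{d}\hookrightarrow\dot B_{2,1}^{d/2}$. For the inequality at regularity $d/2+1$, I would bound $T_u v$ and $T_v u$ by (i), in each case placing the $L^\infty$ factor on the $\dot B_{2,1}^{d/2}$ norm and keeping the $\dot B_{2,1}^{d/2+1}$ norm on the other factor, which reproduces exactly the two terms on the right-hand side; the remainder is then estimated by (iii) with $(s_1,s_2)=(d/2+1,d/2)$ and the embedding $\dot B_{1,1}^{d+1}\hookrightarrow\dot B_{2,1}^{d/2+1}$.

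The only genuinely delicate point is the treatment of the remainder: unlike the paraproducts it is symmetric and cannot be controlled by an $L^\infty$ factor, so one must check the positivity condition $s_1+s_2>0$ (which is what separates the ranges $d\ge2$ and $d\ge1$) and then pay the $d/2$ derivatives of the Besov embedding to pass from the $L^1$-based to the $L^2$-based space. Everything else is a routine assembly of the cited continuity results.
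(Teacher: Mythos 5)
Your proof is correct, and it is essentially the paper's own approach: the paper states this lemma as a classical result and simply cites \cite{BCD}, where the proofs of all three assertions are exactly the Bony paraproduct/remainder arguments you reproduce (paraproduct bounds with an $L^\infty$ or negative-regularity $\dot B_{\infty,\infty}^{\sigma}$ factor, the remainder estimate under $s_1+s_2>0$, and the embedding $\dot B_{1,1}^{s}\hookrightarrow \dot B_{2,1}^{s-d/2}$). Your identification of where the restriction $d\geq 2$ versus $d\geq 1$ enters — the positivity condition on the remainder indices — is precisely the right diagnosis.
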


The following lemma comes from the proof in \cite{BCD} of the following well-known property on Besov spaces : 
$$\|z(\alpha\cdot)\|_{\dot B_{2,1}^s}\simeq \alpha^{s-\frac{d}{2}}\|z\|_{\dot B_{2,1}^s} \quad \text{for all} \ \alpha>0.$$
\begin{lemma}\label{dilatation}
Let $s\in \R$ and $z\in\dot B_{2,1}^s.$ We have that :$$\|z(\alpha \ \cdot)\|_{\dot B_{2,1}^s}^{l^-,\varepsilon}\simeq \alpha^{s-\frac{d}{2}}\|z\|_{\dot B_{2,1}^s}^{l^-,\varepsilon\alpha^{-1}}; \quad \|z(\alpha \ \cdot)\|_{\dot B_{2,1}^s}^{l^+,\varepsilon}\simeq \alpha^{s-\frac{d}{2}}\|z\|_{\dot B_{2,1}^s}^{l^+,\varepsilon\alpha^{-1},\ \alpha^{-1}}; \quad \|z(\alpha \ \cdot)\|_{\dot B_{2,1}^s}^{h}\simeq \alpha^{s-\frac{d}{2}}\|z\|_{\dot B_{2,1}^s}^{h,\ \varepsilon\alpha^{-1}}$$ where we denote $\displaystyle \|\cdot \|_{\dot B_{2,1}^s}^{l^+,\ \alpha,\ \beta}\mathrel{\mathop:}=\sum\limits_{\underset{\alpha \leq 2^j \leq \frac{\beta}{4}}{j\in \Z}} 2^{js} \|\cdot \|_{L^2}$ and $\displaystyle \|\cdot \|_{\dot B_{2,1}^s}^{h,\alpha}\mathrel{\mathop:}=\sum\limits_{\underset{2^j \geq \alpha }{j\geq -2}} 2^{js} \|\cdot \|_{L^2}$. 
\end{lemma}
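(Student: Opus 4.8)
The plan is to prove all three equivalences at once by understanding how a single dyadic block behaves under the dilation $z_\alpha := z(\alpha\,\cdot)$ and then reindexing the frequency-restricted sums. First I would record the Fourier scaling $\widehat{z_\alpha}(\xi) = \alpha^{-d}\widehat z(\alpha^{-1}\xi)$ and deduce, by a change of variables in the inverse transform, the block identity
$$\dot\Delta_j z_\alpha = \Big[\mathcal{F}^{-1}\big(\varphi(2^{-j}\alpha\,\cdot)\,\widehat z\big)\Big](\alpha\,\cdot),$$
which together with $\|g(\alpha\,\cdot)\|_{L^2}=\alpha^{-d/2}\|g\|_{L^2}$ gives
$$\|\dot\Delta_j z_\alpha\|_{L^2} = \alpha^{-d/2}\,\big\|\mathcal{F}^{-1}\big(\varphi(2^{-j}\alpha\,\cdot)\,\widehat z\big)\big\|_{L^2}.$$

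Next I would compare the right-hand side to the blocks of $z$ itself. The multiplier $\varphi(2^{-j}\alpha\,\cdot)$ is supported in an annulus of frequency magnitude $\simeq 2^{j}\alpha^{-1}$, so introducing the index $j'$ with $2^{j'}\simeq 2^{j}\alpha^{-1}$ (i.e. $j'\approx j-\log_2\alpha$) and using $\sum_k\varphi(2^{-k}\cdot)=1$ together with the finite overlap of the Littlewood--Paley annuli yields
$$\big\|\mathcal{F}^{-1}\big(\varphi(2^{-j}\alpha\,\cdot)\,\widehat z\big)\big\|_{L^2}\simeq \|\dot\Delta_{j'}z\|_{L^2},$$
with constants independent of $j,\alpha,z$. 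Since $2^{js}=\alpha^{s}2^{j's}$, this produces the block-level relation $2^{js}\|\dot\Delta_j z_\alpha\|_{L^2}\simeq \alpha^{s-d/2}\,2^{j's}\|\dot\Delta_{j'}z\|_{L^2}$, which is exactly the mechanism behind the classical identity $\|z_\alpha\|_{\dot B_{2,1}^s}\simeq\alpha^{s-d/2}\|z\|_{\dot B_{2,1}^s}$ recalled just before the lemma.

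It then remains to sum this relation over the restricted ranges of $j$ and to track the image of each range under $j\mapsto j'\approx j-\log_2\alpha$. Because $2^{j'}\simeq 2^{j}\alpha^{-1}$, the condition $2^{j}\le\varepsilon$ becomes $2^{j'}\lesssim\varepsilon\alpha^{-1}$ (the cutoff of $\|\cdot\|^{l^-,\varepsilon\alpha^{-1}}$), the medium band $\varepsilon\le 2^{j}\lesssim 1$ becomes $\varepsilon\alpha^{-1}\lesssim 2^{j'}\lesssim\alpha^{-1}$ (the two thresholds of $\|\cdot\|^{l^+,\varepsilon\alpha^{-1},\alpha^{-1}}$), and the high range $2^{j}\gtrsim 1$ becomes $2^{j'}\gtrsim\alpha^{-1}$, giving the high-frequency norm on the right-hand side. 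Summing in $\ell^1$ over $j$ and pulling out the common factor $\alpha^{s-d/2}$ then yields the three stated equivalences.

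The only genuinely delicate point --- and where I expect to spend the effort --- is the bookkeeping when $\alpha$ is not a power of $2$: then $j'$ is determined only up to an $O(1)$ shift, the tilted annulus $\mathrm{supp}\,\varphi(2^{-j}\alpha\,\cdot)$ straddles a bounded number of standard dyadic annuli, and the two-sided comparison $\|\mathcal{F}^{-1}(\varphi(2^{-j}\alpha\cdot)\widehat z)\|_{L^2}\simeq\|\dot\Delta_{j'}z\|_{L^2}$ must be obtained by spreading over finitely many neighbours $\dot\Delta_{j'-N},\dots,\dot\Delta_{j'+N}$ (via Bernstein's inequality and the $L^2$-boundedness of the multiplier). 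This same constant shift makes the frequency thresholds above match only up to a fixed multiplicative constant, which is harmless for the equivalences ``$\simeq$''; everything else reduces to routine summation.
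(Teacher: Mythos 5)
Your mechanism is the right one, and it is in fact the only proof the paper has in mind: the paper gives no argument for this lemma at all, saying only that it ``comes from the proof in \cite{BCD}'' of the unrestricted scaling identity, and your block identity, the $L^2$ dilation factor $\alpha^{-d/2}$, the finite overlap of the tilted annulus $\mathrm{supp}\,\varphi(2^{-j}\alpha\,\cdot)$ with the standard dyadic annuli, and the reindexing $j\mapsto j'$ are exactly that argument spelled out. Everything you write up to and including the spreading over finitely many neighbours $\dot\Delta_{j'-N},\dots,\dot\Delta_{j'+N}$ is correct.

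The gap is the final claim that the resulting $O(1)$ shift of the frequency cutoffs ``is harmless for the equivalences $\simeq$''. It is not. What your computation actually proves is a pair of one-sided estimates with \emph{different} cutoffs, of the type
\begin{equation*}
\alpha^{s-\frac d2}\sum_{2^{j'}\le \frac14\varepsilon\alpha^{-1}}2^{j's}\|\dot\Delta_{j'}z\|_{L^2}
\ \lesssim\ \|z(\alpha\,\cdot)\|_{\dot B_{2,1}^s}^{l^-,\varepsilon}
\ \lesssim\ \alpha^{s-\frac d2}\sum_{2^{j'}\le 4\varepsilon\alpha^{-1}}2^{j's}\|\dot\Delta_{j'}z\|_{L^2},
\end{equation*}
and these cannot be collapsed to a single cutoff $\varepsilon\alpha^{-1}$: two restricted sums whose thresholds differ by a fixed factor are not mutually comparable, because the finitely many boundary blocks belong to one sum and are entirely absent from (hence uncontrolled by) the other. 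Read with literal cutoffs, the first stated equivalence is even false: take the standard Littlewood--Paley function (positive on the open annulus), $\alpha=3/2$, $\varepsilon=1/2$, and $\widehat z$ a bump concentrated near $|\xi|=4/5$. Then $\dot\Delta_{-1}\bigl(z(\alpha\,\cdot)\bigr)\neq 0$, since the relevant multiplier is $\varphi(3|\xi|)$ evaluated near $2.4<8/3$, so the left-hand side is positive; but $\|z\|_{\dot B_{2,1}^s}^{l^-,\varepsilon\alpha^{-1}}=\|z\|_{\dot B_{2,1}^s}^{l^-,1/3}=0$, because every block $\dot\Delta_{j}z$ with $2^{j}\le 1/3$ sees only frequencies $|\xi|\le 8/3\cdot\frac14<4/5$. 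So no argument can close the step you dismissed; the honest conclusion of your (correct) computation is the threshold-adjusted, two-sided sandwich above, with cutoffs dilated or contracted by explicit fixed factors --- which is precisely the bookkeeping the lemma itself gestures at through its ad hoc cutoffs ($\beta/4$ in the $l^+$ norm, $j\ge-2$ in the $h$ norm). A faithful write-up must state and prove that adjusted version and then verify that it suffices at the point of use, namely when transferring the a priori estimates of Theorem \ref{théorème d'existence sur le système après changement de variable} through the rescaling \eqref{changement de variable 2}.
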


\begin{lemma} \label{fonction régulière besov}
Let $F:\R^d \rightarrow \R^p$ a smooth function with $F(0)=0$. Then for all $(p,r)\in [1,\infty]^2$, $s>0$ and $u\in \dot B_{p,r}^s\cap L^\infty$, we have $F(u)\in \dot B_{p,r}^s\cap L^\infty$ and $$\|F(u)\|_{\dot B_{p,r}^s}\leq C \|u\|_{\dot B_{p,r}^s},$$ with $C$ depending only on $\|u\|_{L^\infty}$, $F'$ (and higher order derivatives), $s$, $p$ and $d$.
\end{lemma}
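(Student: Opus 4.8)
The plan is to reduce the estimate to a first-order Taylor expansion along the Littlewood--Paley truncations of $u$, and then to turn the positivity of $s$ into a convergent dyadic summation. First I would record the elementary consequence of $F(0)=0$ and the mean value theorem,
$$\|F(u)\|_{L^\infty}\le \Big(\sup_{|y|\le\|u\|_{L^\infty}}|F'(y)|\Big)\,\|u\|_{L^\infty},$$
so that $F(u)\in L^\infty$ and every function of $u$ appearing below takes values in a fixed ball on which $F$ and all its derivatives are bounded; this is the source of the dependence of the final constant on $\|u\|_{L^\infty}$ and on $F$.

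The core of the argument is the telescoping identity
$$F(u)=\sum_{q\in\Z}\big(F(\dot S_{q+1}u)-F(\dot S_q u)\big)=:\sum_{q\in\Z}R_q,$$
which is legitimate since $\dot S_q u\to u$ as $q\to+\infty$ and $\dot S_q u\to 0$ in $L^\infty$ as $q\to-\infty$ (this is exactly the defining property of $\mathcal S_h'$), so that by continuity and $F(0)=0$ the partial sums converge to $F(u)$. Writing $\dot S_{q+1}u=\dot S_q u+u_q$ and applying the mean value theorem yields $R_q=m_q\,u_q$ with $m_q:=\int_0^1 F'(\dot S_q u+\tau u_q)\,d\tau$. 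Because the argument of $F'$ is bounded by $C\|u\|_{L^\infty}$ in $L^\infty$, one gets $\|m_q\|_{L^\infty}\lesssim 1$; differentiating under the integral sign and using Bernstein's inequality (so that $\|\nabla\dot S_q u\|_{L^\infty}\lesssim 2^q\|u\|_{L^\infty}$, and likewise for $u_q$) gives more generally $\|\nabla^N m_q\|_{L^\infty}\lesssim 2^{qN}$ for every integer $N$, with constants depending only on $\|u\|_{L^\infty}$ and on $F',\dots,F^{(N+1)}$. Combining this with $\|\nabla^N u_q\|_{L^p}\lesssim 2^{qN}\|u_q\|_{L^p}$ produces the two bounds on which everything rests:
$$\|R_q\|_{L^p}\lesssim \|u_q\|_{L^p},\qquad \|\nabla^N R_q\|_{L^p}\lesssim 2^{qN}\|u_q\|_{L^p}.$$

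The final step is to estimate $\dot\Delta_j F(u)=\sum_q\dot\Delta_j R_q$ by splitting the sum at $q=j$. I would fix an integer $N>s$, use the boundedness of $\dot\Delta_j$ on $L^p$ with the first bound for $q\ge j$, and the smoothing estimate $\|\dot\Delta_j R_q\|_{L^p}\lesssim 2^{-jN}\|\nabla^N R_q\|_{L^p}$ (a consequence of the spectral localization of $\dot\Delta_j$) with the second bound for $q<j$. Setting $d_q:=2^{qs}\|u_q\|_{L^p}$, this gives
$$2^{js}\|\dot\Delta_j F(u)\|_{L^p}\lesssim \sum_{q\ge j}2^{(j-q)s}d_q+\sum_{q<j}2^{(j-q)(s-N)}d_q.$$
Both right-hand sides are discrete convolutions of $(d_q)$ against a sequence in $\ell^1$ — the first converges because $s>0$, the second because $N>s$ — so Young's inequality for series yields $\|F(u)\|_{\dot B^s_{p,r}}\lesssim \|(d_q)\|_{\ell^r}=\|u\|_{\dot B^s_{p,r}}$. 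I expect the only genuinely delicate point to be precisely the one that forces this two-sided splitting: the composition $F(\dot S_q u)$ does not preserve frequency localization, so the pieces $R_q$ are \emph{not} spectrally supported in dyadic annuli and the naive ``sum of localized blocks'' lemma is unavailable. The hypothesis $s>0$ (controlling the high-frequency half $q\ge j$) and the freedom to choose $N>s$ (controlling the low-frequency half $q<j$) are exactly what compensate for this lack of localization.
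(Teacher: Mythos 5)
Your proof is correct. The paper offers no proof of this lemma at all — it is recalled in the appendix as a classical result with a pointer to \cite{BCD} — and your argument is precisely the classical one found in that reference (Meyer's first-linearization method): the telescoping decomposition $F(u)=\sum_{q}\bigl(F(\dot S_{q+1}u)-F(\dot S_q u)\bigr)=\sum_q m_q u_q$, the Meyer-multiplier bounds $\|\nabla^N m_q\|_{L^\infty}\lesssim 2^{qN}$, and the dyadic summation split at $q=j$, using $s>0$ for the high-frequency half and the freedom to take $N>s$ for the low-frequency half.
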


\begin{lemma} \label{fonction regulière besov 2}
Let $g\in C^{\infty}(\R)$ such that $g'(0)=0$. Then, for all $u,v\in \dot B_{2,1}^s\cap L^\infty$ with $s>0$, $$\|g(v)-g(u)\|_{\dot B_{2,1}^s}\leq C\left(\|v-u\|_{L^\infty}\|(u,v)\|_{\dot B_{2,1}^s}+\|v-u\|_{\dot B_{2,1}^s} \|(u,v)\|_{L^\infty} \right)\cdotp$$
\end{lemma}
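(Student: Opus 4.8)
The plan is to reduce the difference $g(v)-g(u)$ to a product of $(v-u)$ with an averaged derivative, and then to combine the general product law with the composition estimate of Lemma \ref{fonction régulière besov}. First I would write, via the fundamental theorem of calculus,
$$g(v)-g(u)=(v-u)\,m,\qquad m:=\int_0^1 g'\bigl((1-\tau)u+\tau v\bigr)\,d\tau,$$
which isolates the two factors whose $L^\infty$ and $\dot B_{2,1}^s$ norms must be controlled. Note that only $g'(0)=0$ (not $g(0)$) is used, consistently with the hypothesis. Writing $w:=v-u$ and $w_\tau:=(1-\tau)u+\tau v$, I would invoke the homogeneous product law in $\dot B_{2,1}^s\cap L^\infty$ valid for $s>0$ (the estimate $\|fg\|_{\dot B_{2,1}^s}\lesssim \|f\|_{L^\infty}\|g\|_{\dot B_{2,1}^s}+\|f\|_{\dot B_{2,1}^s}\|g\|_{L^\infty}$ from \cite{BCD}) to obtain
$$\|g(v)-g(u)\|_{\dot B_{2,1}^s}\lesssim \|w\|_{L^\infty}\,\|m\|_{\dot B_{2,1}^s}+\|w\|_{\dot B_{2,1}^s}\,\|m\|_{L^\infty}.$$

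For the $L^\infty$ bound on $m$, I would use $g'(0)=0$ and Taylor's formula to write $g'(\xi)=\xi\int_0^1 g''(\sigma\xi)\,d\sigma$, so that $|g'(\xi)|\le C|\xi|$ whenever $|\xi|\le\|(u,v)\|_{L^\infty}$. Since $w_\tau$ is a pointwise convex combination of $u$ and $v$, one has $|w_\tau(x)|\le\|(u,v)\|_{L^\infty}$, and hence $\|m\|_{L^\infty}\lesssim\|(u,v)\|_{L^\infty}$ with a constant depending only on $\|(u,v)\|_{L^\infty}$. Multiplied by $\|w\|_{\dot B_{2,1}^s}=\|v-u\|_{\dot B_{2,1}^s}$, this reproduces exactly the second term of the claimed inequality.

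For the Besov bound on $m$, I would commute the dyadic blocks $\dot\Delta_j$ (Fourier multipliers) with the $\tau$-integral and, using Tonelli together with the nonnegative $\ell^1$ summation defining $\|\cdot\|_{\dot B_{2,1}^s}$, obtain Minkowski's integral inequality in the form
$$\|m\|_{\dot B_{2,1}^s}\le \int_0^1 \|g'(w_\tau)\|_{\dot B_{2,1}^s}\,d\tau.$$
As $g'$ is smooth with $g'(0)=0$, Lemma \ref{fonction régulière besov} applied to $F=g'$ gives $\|g'(w_\tau)\|_{\dot B_{2,1}^s}\le C(\|w_\tau\|_{L^\infty})\,\|w_\tau\|_{\dot B_{2,1}^s}$; since $\|w_\tau\|_{L^\infty}\le\|(u,v)\|_{L^\infty}$ and $\|w_\tau\|_{\dot B_{2,1}^s}\le\|(u,v)\|_{\dot B_{2,1}^s}$ uniformly in $\tau\in[0,1]$, the composition constant is uniform and we get $\|m\|_{\dot B_{2,1}^s}\lesssim\|(u,v)\|_{\dot B_{2,1}^s}$. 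Multiplying by $\|w\|_{L^\infty}=\|v-u\|_{L^\infty}$ yields the first term, and adding the two contributions closes the argument.

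The step requiring the most care is ensuring that the product and composition laws genuinely apply at the stated generality in the \emph{homogeneous} space $\dot B_{2,1}^s$ for $s>0$: this is exactly where the hypotheses $u,v\in L^\infty$ are essential, since they guarantee that each $g'(w_\tau)$ belongs to $\dot B_{2,1}^s\cap L^\infty$ and that the composition constants stay uniform in $\tau$, so that the $\tau$-integration may be performed last. No genuine obstacle arises beyond invoking these standard estimates correctly.
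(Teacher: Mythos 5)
Your proof is correct, and it coincides with the classical argument: the paper does not prove this lemma at all, but merely quotes it as a standard result from \cite{BCD}, and the proof given there is exactly yours. Namely, one writes $g(v)-g(u)=(v-u)\int_0^1 g'\bigl((1-\tau)u+\tau v\bigr)\,d\tau$, bounds the product by the law $\|fh\|_{\dot B_{2,1}^s}\lesssim \|f\|_{L^\infty}\|h\|_{\dot B_{2,1}^s}+\|f\|_{\dot B_{2,1}^s}\|h\|_{L^\infty}$ (valid for $s>0$), and controls the averaged factor in $L^\infty$ by $|g'(\xi)|\lesssim|\xi|$ (from $g'(0)=0$) and in $\dot B_{2,1}^s$ by Minkowski's inequality together with the composition estimate of Lemma \ref{fonction régulière besov} applied to $F=g'$, exactly as you did.
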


\end{document}